\newcommand{\R}{\mathbb{R}}
\newcommand{\psd}{\ensuremath{\mathcal{PSD}}}
\newcommand{\nn}{\ensuremath{\mathcal{N}}}
\newcommand{\sym}{\ensuremath{\mathcal{S}}}
\newcommand{\cop}{\mathcal{COP}}
\newcommand{\cp}{\mathcal{CP}}
\newcommand{\spn}{\mathcal{SPN}}
\DeclareMathOperator{\trace}{trace}
\DeclareMathOperator{\rank}{rank}
\DeclareMathOperator{\supp}{supp}
\DeclareMathOperator{\diag}{diag}
\DeclareMathOperator{\cpr}{cpr}
\DeclareMathOperator{\tf}{tf}
\newtheorem{theorem}{Theorem}[section]
\newtheorem{lemma}{Lemma}[section]
\newtheorem{corollary}{Corollary}[section]
\newtheorem{proposition}{Proposition}[section]
\newtheorem{observation}{Observation}[section]
\def\x{\mathbf x}
\def\u{\mathbf u}
\def\v{\mathbf v}
\def\d{\mathbf d}
\def\w{\mathbf w}
\def\0{\mathbf 0}
\def\e{\mathbf e}
\def\b{\mathbf b}
\def\1{\bf 1}
\def\cob#1{{\color{blue}#1}}
\title{On the DJL conjecture for order 6\thanks{This work was supported by grant no.\ G-18.-304.2/2011 by the German-Israeli Foundation for Scientific Research and
Development (GIF). } \\
}
\author{
Naomi Shaked-Monderer\thanks{The Max Stern Yezreel Valley College, Yezreel Valley 19300, Israel.
Email: nomi@tx.technion.ac.il}
}
\date{\today}
\begin{document}
\maketitle

\begin{abstract}
\noindent

In 1994 Drew, Johnson and Loewy conjectured that  for $n \ge 4$, the cp-rank of any $n\times n$ completely positive matrices is at most $\lfloor{n^2}/{4}\rfloor$. Recently
this
conjecture has been proved for $n=5$ and disproved for $n\ge 7$, leaving the case $n=6$ open.
We make a step toward proving the conjecture for $n=6$. We show that if $A$ is a $6\times 6$ completely positive matrix that is orthogonal to an exceptional
extremal
copositive matrix,  then the cp-rank of $A$ is at  most $9$.

\medskip

\noindent
\textbf{Keywords:} Completely positive matrix,  cp-rank, copositive matrix, exceptional matrix,  minimal zeros.

\noindent
\textbf{Mathematical Subject Classification 2010:}  15B48,  15A23

\end{abstract}

\section{Introduction}
A square matrix $A$ is \emph{completely positive} if it has a factorization \begin{equation}\label{eq:cp factorization} A=BB^T, ~~B\ge 0,\end{equation}
where $B$ is not necessarily square. For $A\ne 0$, the minimal
number of columns in such $B$ is the \emph{cp-rank} of $A$, denoted here by $\cpr(A)$. The factorization (\ref{eq:cp factorization}) is a \emph{cp-factorization} of $A$; if the
number of columns of $B$ is   $\cpr(A)$,  (\ref{eq:cp factorization})  is a \emph{minimal cp-factorization}. Finding a tight upper bound on the cp-ranks of $n\times n$
completely
positive matrices is one of the basic problems in the theory of completely positive matrices.

Let $\cp_n$ denote the set of all $n\times n$ completely positive matrices, and let \[p_n=\max_{A\in \cp_n} \cpr(A). \]
For $n\le 4$ it is long known that $p_n=n$ (see, e.g., \cite[Theorem 3.3]{BermanShaked03}).
It was conjectured by Drew, Johnson and Loewy in 1994 that $p_n= \lfloor\frac{n^2}{4}\rfloor$ for every $n\ge 4$
\cite{DrewJohnsonLoewy94}.
The proof for $n=5$  was finally completed only a couple of years ago \cite{LoewTam03, ShakBomJarScha13}.
However, recently this conjecture, the DJL conjecture, was disproved by Bomze, Schachinger and Ullrich, who presented counter examples for any $n\ge 7$, and showed that asymptotically
$p_n$ is of the order $\frac{n^2}{2}$  \cite{BomzeSchachingerUllrich14, BomzeSchachingerUllrich14b}.

A tight upper bound on the cp-rank of a rank $r$, $r\ge 2$, completely positive matrix (of any order)
is known \cite{HannaLaffey83, BarioliBerman03}: $\frac{r(r+1)}{2}-1$, see also \cite[Section 3.2]{BermanShaked03}. This yields the upper bound $\frac{n(n+1)}{2}-1$ on $p_n$, but
this bound is not tight: in \cite{ShakBerBomJarScha14} it was shown that the maximum cp-rank of an $n\times n$ completely positive matrix, $n\ge 5$, is not greater than
$\frac{n(n+1)}{2}-4$. By \cite{BomzeSchachingerUllrich14b}, for $n\ge 15$
\[p_n \ge \frac{n(n+1)}{2}-4-n\left(\sqrt{2n}-\frac{3}{2}\right).\]

Finding an exact tight upper bound on the cp-ranks of $n\times n$ matrices of order $n\ge 6$
is still an open problem, and it is not known whether the DJL bound holds for $n=6$. In \cite{ShakBomJarScha13} it was proved that for every $n$, $p_n$
is attained at a nonsingular matrix on the boundary of $\cp_n$. Thus to prove the DJL conjecture for $n=6$ it suffices to consider the cp-ranks of (nonsingular) matrices on the
boundary of the cone $\cp_6$. In this paper it is shown that for every matrix $A$ on some part of the boundary of $\cp_6$ where $p_6$ may be attained, $\cpr(A)\le 9=6^2/4$. This
part of the boundary includes all the \emph{positive} nonsingular matrices on the boundary of $\cp_6$.

To state the result explicitly, we note that $\cp_n$ is a closed convex cone in the space $\sym_n$ of real
$n\times n$ symmetric matrices, which is a Euclidean space with the inner product
\[\langle A,B\rangle=\trace(AB).\]
The \emph{dual} of a cone $\mathcal{K}\subseteq \sym_n$  is defined by
\[\mathcal{K}^*=\{A\in \sym_n | \langle A,B\rangle\ge 0 \mbox{ for every } B\in \mathcal{K}\},\]
and if $\mathcal{K}$ is closed and convex, its boundary consists of matrices that are orthogonal to extremal matrices in the convex cone $\mathcal{K}^*$.
The dual of the cone $\cp_n$ is the closed convex cone $\cop_n$ of copositive matrices.
A matrix $A\in \sym_n$ is \emph{copositive} if $\x^TA\x\ge 0$ for every nonnegative vector $\x\in \R^n$.
Each positive semidefinite matrix is copositive,
and so is each symmetric nonnegative matrix. A matrix which is a sum of a positive semidefinite
matrix and a nonnegative matrix, called an \emph{SPN matrix}, is also copositive.
A matrix which is copositive but not SPN is called \emph{exceptional}. For $n\ge 5$ there exist exceptional
matrices in $\cop_n$. In $\cop_n$ there are positive semidefinite extremal matrices, nonnegative extremal matrices, and for $n\ge 5$ also exceptional extremal matrices.
Accordingly, for $n\ge 5$ the boundary of $\cp_n$ consists of three (not mutually disjoint) parts: singular matrices, matrices with some zero entries, and matrices orthogonal to
exceptional
extremal matrices. Since, as mentioned above, $p_n$ is attained at a nonsingular matrix on the boundary
of $\cp_n$, it is attained either at a matrix with some zero entries, or at a matrix orthogonal to an exceptional extremal matrix in $\cop_n$.
The  main result of this paper is:

\begin{theorem}\label{thm:main}
Let $A\in \cp_6$ be orthogonal to an exceptional extremal matrix $M\in \cop_6$.  Then $\cpr (A)\le 9$.
\end{theorem}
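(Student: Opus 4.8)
The plan is to combine the elementary consequence of orthogonality to a copositive matrix with the known structure of the minimal zeros of exceptional extremal matrices in $\cop_6$.

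First I would record the reduction. If $A=\sum_{i=1}^{k}\b_i\b_i^T$ with all $\b_i\ge\0$ is any cp-factorization, then $0=\langle A,M\rangle=\sum_{i=1}^{k}\b_i^TM\b_i$, and since every summand is nonnegative ($M$ copositive, $\b_i\ge\0$), each $\b_i$ is a \emph{zero} of $M$. For a zero $\u$ of a copositive matrix one has $M\u\ge\0$, hence two zeros $\u,\v$ satisfy $\u^TM\v\ge0$ with equality precisely when $\supp(\v)\cap\supp(M\u)=\emptyset$; in particular the support of every zero of $M$ contains the support of a minimal zero, and only finitely many supports occur. Since simultaneous permutation similarity $A\mapsto P^TAP$, $M\mapsto P^TMP$ and positive diagonal congruence $A\mapsto D^{-1}AD^{-1}$, $M\mapsto DMD$ preserve complete positivity, the cp-rank, copositivity, extremality, exceptionality, and the relation $\langle A,M\rangle=0$, I may normalize $M$ inside its equivalence class. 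Thus it suffices to take a minimal cp-factorization $A=\sum_{i=1}^{\cpr(A)}\b_i\b_i^T$ whose columns are zeros of the normalized $M$, with supports drawn from the short list of zero-supports of $M$.

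Next I would invoke the classification of exceptional extremal matrices of $\cop_6$ and, above all, of their minimal-zero support patterns, which — as for the Horn matrix in $\cop_5$ — are few in number and combinatorially rigid, with supports of size $2$ and $3$ arranged along short circulant patterns. For each such pattern I would partition the columns $\b_i$ of a minimal cp-factorization into a small number of groups according to their supports, chosen so that the union of the supports in each group lies in an index set $T_\ell$ with $|T_\ell|\le4$. Writing $A=\sum_\ell A_\ell$ accordingly, each $A_\ell\in\cp_6$ is supported on $T_\ell$, so $\cpr(A)\le\sum_\ell\cpr(A_\ell)\le\sum_\ell p_{|T_\ell|}$; using $p_m=m$ for $m\le4$ and arranging, for most patterns, two groups with $|T_1|,|T_2|\le4$ (or three groups with sizes summing to at most $9$) yields $\cpr(A)\le9$.

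The main obstacle is exactly the handful of highly symmetric (fully circulant) patterns for which no column-grouping funnels the zeros into index sets small enough for this naive estimate — any partition is forced to leave two index sets of size $5$, giving only the useless $p_5+p_5=12$. For these I would argue directly with the specific normalized $M$ and its explicit list of zeros: either show that orthogonality to that $M$ forces $\rank A\le4$, whence $\cpr(A)\le\frac{r(r+1)}{2}-1\le9$ by the rank bound; or, starting from a minimal cp-factorization with all $\b_i$ zeros of $M$, use the compatibility relations $\u^TM\v=0$ to produce a linear dependence $\sum_i c_i\b_i\b_i^T=0$ with both positive and negative coefficients among the $c_i$, and apply the standard cp-rank reduction (subtract a suitable multiple of this relation to zero out one column while keeping all others nonnegative), iterating until at most $9$ columns survive. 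Carrying out this last case analysis — pinning down $\rank A$, or producing the required dependencies, for each of the problematic matrices $M$ — is the technical heart of the argument.
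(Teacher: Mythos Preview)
Your outline has a genuine gap at the main step. The partition-by-support strategy --- grouping the columns $\b_i$ so that each group is supported on an index set $T_\ell\subseteq\{1,\dots,6\}$ with $|T_\ell|\le 4$ and then summing $p_{|T_\ell|}$ --- is too coarse to reach $9$ in most of the relevant cases. For a concrete obstruction, take the minimal support pattern $\{1,2,3\},\{1,2,4\},\{1,3,5\},\{2,4,6\},\{3,5,6\},\{4,5,6\}$ (one of many such). No two index sets of size $\le 4$ together contain all six minimal supports, and any covering by three sets of size $\le 4$ already has total size $\ge 12$; the bound you obtain is at best $12$, not $9$. The vast majority of the $44$ potential patterns behave similarly, so this is not a matter of ``a handful of highly symmetric'' exceptions. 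Your proposed fallbacks do not rescue this: orthogonality to $M$ does \emph{not} force $\rank A\le 4$ (indeed $p_6$ is attained at nonsingular boundary matrices), and the vague ``linear dependency reduction'' gives no mechanism for stopping at $9$. A secondary issue is that you invoke ``the classification of exceptional extremal matrices of $\cop_6$''; no such classification exists --- what is available is Hildebrand's list of $44$ \emph{potential} minimal-zero support sets, and the argument must work uniformly over all of them.

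The idea you are missing is to pass from $A$ to an auxiliary completely positive matrix indexed by the \emph{minimal zeros} of $M$ rather than by $\{1,\dots,6\}$. Writing $B=WX$ with $W$ the $6\times k$ matrix of minimal zeros, one has $\cpr(A)\le\cpr(XX^T)$; the point is that $XX^T$ is $k\times k$ with $k\in\{6,7,8\}$, and its graph is controlled by which \emph{pairs} of minimal supports can combine into a zero support. Because every zero support of an exceptional extremal $M\in\cop_6$ has size at most $4$, and a size-$4$ support containing a size-$3$ minimal support is a union of exactly two minimal supports, the columns of $X$ have support of size at most $2$ in all cases where at most two minimal supports have size $2$ --- making $XX^T$ diagonally dominant after scaling, hence $\cpr(XX^T)\le\lfloor k^2/4\rfloor\le 9$. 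The remaining few patterns are handled by graph-theoretic cp-rank bounds (triangle-free, outerplanar, wheel) applied to the graph $\mathcal{G}_{\mathcal{V}}(M)$ on the minimal supports. This transfer to $XX^T$, not a direct partition of $\{1,\dots,6\}$, is what makes the bound $9$ attainable.
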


To prove the theorem we rely on some known results. In particular we need results on minimal cp-factorizations and the cp-rank, some of them in terms
of the zero-nonzero pattern of the completely positive matrix, described by a graph. We also need results on extremal copositive matrices. In Section 2 the needed known results and the relevant concepts are recalled. Theorem \ref{thm:main} is proved
in Section 3.

\section{Preliminaries}\label{prelim}
\subsection{Notation and terminology}
We denote by  $|\alpha|$ the number of elements in a set $\alpha$. The cone of nonnegative vectors in $\R^n$ is denoted by  $\R^n_+$.
Vectors are denoted by bold lower case letters, and the $i$th entry of a vector $\x$ is denoted by $x_i$.  A vector of all ones is denoted by $\1$ and a zero vector by $\0$. The
standard basis vectors in $\R^n$
are $\e_1,\dots, \e_n$.
For a vector $\x\in \R^n$, the \emph{support} of $\x$ is
$\supp \x=\{1\le i\le n\, |\, x_i \ne 0\}$.
The  space of all $m\times n$ real matrices is denoted by $\R^{m\times n}$, and the cone of nonnegative matrices in
this space is denoted by $\R^{m\times n}_+$.
For $M\in \R^{m\times m}$ and $N\in \R^{n\times n}$, $M\oplus N$ is the direct sum of $M$ and $N$.
The vector of diagonal elements of a matrix $A\in \R^{n\times n}$ is denoted by $\diag(A)$.
The matrix $E_{ij}\in \sym_n$ has all entries zero except for the $ij$ and $ji$ entries, which are equal to $1$.
The all ones matrix in $\sym_n$ is denoted by $J_n$ ($J$, when the order is obvious). A \emph{$\pm 1$ matrix} is a matrix all of whose entries are either $1$ or $-1$.
For $A\in \R^{n\times n}$ and $\alpha \subseteq \{1, \dots, n\}$, $A[\alpha]$ denotes the principal submatrix of $A$ on rows and columns $\alpha$, and $A(\alpha)$ the submatrix
induced on the rows and columns other than $\alpha$. We  abbreviate $A[\{i_1,\dots, i_k\}]$  as $A[i_1,\dots, i_k]$, and $A(\{i_1,\dots, i_k\})$  as $A(i_1,\dots, i_k)$. For a
vector $\x\in \R^n$ and $\alpha \subseteq \{1, \dots, n\}$, $\x[\alpha]$ is the vector in $\R^{|\alpha|}$ consisting of
the entries of $\x$ indexed by $\alpha$.
If $A\in \sym_n$ and $B$ is obtained from $A$ by permutation similarity and/or diagonal congruence by a positive diagonal matrix,
we say that $B$ is \emph{in the orbit of $A$}.

Several types of graphs associated with matrices will be used. All graphs in this paper are undirected and simple (no multiple
edges or loops). For graph terminology and notations see \cite{Diestel2010}. We mention here only a few:  The vertex set of a graph $G$ is referred to as $V(G)$, and its edge
set as $E(G)$.
 For a vertex $v\in V(G)$, $d(v)$ denotes the \emph{degree} of $v$, i.e., the number of edges at $v$;
$G-v$ denotes the subgraph of $G$ induced on $V(G)\setminus \{v\}$. For $u,v\in V(G)$, the \emph{distance} between $u$ and $v$ in $G$ is $d_G(u,v)$. The \emph{size} of a graph
$G$ is the number of edges in $G$,  $|E(G)|$. We denote by $\tf(G)$ the size of the
largest triangle free subgraph of $G$. By a theorem of Mantel, the maximum number of edges in a triangle free graph with $n$ vertices is
$\left\lfloor\frac{n^2}{4}\right\rfloor$,
and it is attained by the complete bipartite graph whose independent bipartition sets are as balanced as possible. The complete bipartite graph with independent bipartition sets
of size $m$ and $k$ is denoted by $K_{m,k}$, and $K_{m,1}$ is
a \emph{star}.
For $A\in \sym_n$, the \emph{graph of $A$} is denoted by $G(A)$. It is the graph whose vertex set is $\{1, \dots, n\}$, with $ij$ an edge if and only if $a_{ij}\ne 0$.

\subsection{Minimal cp-factorizations and the cp-rank}
We often use the fact that when $B=(\b_1|\dots |\b_p)$, (\ref{eq:cp factorization}) is equivalent to
\begin{equation}A=\sum_{i=1}^p \b_i\b_i^T, \quad \b_i\in \R^n_+\, .\label{eq:cp decomposition}\end{equation}
The sum (\ref{eq:cp decomposition}) is called a \emph{cp-decomposition} of $A$ (a \emph{minimal cp-decomposition} if $\cpr (A)=p$).
Given a cp-decomposition of $A\in \cp_n$, we may sometimes replace some of the vectors in the decomposition, without changing
the total number of summands, using the following result, which is Observation 1 in \cite{LoewTam03}.

\begin{proposition}\label{pro:pairmove}
Let $\b,\d\in \R^n_+$ such that $\supp \b\subseteq\supp \d$. Then there exist vectors $\tilde{\b}, \tilde{\d}\in \R^n_+$ such that
$\tilde{\b}\tilde{\b}^T+\tilde{\d}\tilde{\d}^T=\b\b^T+\d\d^T$, $\supp \tilde{\d}=\supp \d$, $\supp \tilde{\b}\subseteq \supp \tilde{\d}$, $\supp \d\setminus \supp \b\subseteq
\supp \tilde{\b}$,
and for at least one $i\in \supp \b$, $i\notin \supp\tilde \b$.
\end{proposition}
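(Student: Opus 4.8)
The plan is to produce $\tilde\b$ and $\tilde\d$ explicitly by applying to the pair $(\b,\d)$ a single planar orthogonal transformation that transfers ``mass'' from $\b$ onto $\d$ until some coordinate of the first vector drops to $0$. Geometrically this works because $\b\b^T+\d\d^T$ has rank at most $2$, so replacing the columns of $(\b\mid\d)$ by $(\b\mid\d)U$ for any $2\times2$ orthogonal $U$ leaves the sum of rank-one terms unchanged; the task is merely to pick $U$ keeping both new columns nonnegative with the prescribed supports. Since the last requested property forces $\supp\b\neq\emptyset$, we may assume $\b\neq\0$, and then $\supp\b\subseteq\supp\d$ gives $\d\neq\0$ as well. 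Let
\[ \mu=\min\Bigl\{\tfrac{d_i}{b_i}\ \Big|\ i\in\supp\b\Bigr\}>0, \]
a minimum over a finite nonempty set, attained at some $i_0\in\supp\b$, and set $s=(1+\mu^2)^{-1/2}$. Define
\[ \tilde\b=s(\d-\mu\b),\qquad \tilde\d=s(\b+\mu\d). \]

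First I would verify the Gram identity. The $2\times 2$ matrix with rows $(-\mu s,\,s)$ and $(s,\,\mu s)$ is orthogonal, so on expanding $\tilde\b\tilde\b^T+\tilde\d\tilde\d^T$ the cross terms $\pm\mu s^2(\b\d^T+\d\b^T)$ cancel and the surviving terms sum to $s^2(1+\mu^2)(\b\b^T+\d\d^T)=\b\b^T+\d\d^T$, as required. Next comes the support bookkeeping. From $\tilde\d=s(\b+\mu\d)$ with $\b,\d\ge\0$, $s,\mu>0$ and $\supp\b\subseteq\supp\d$, we get $\tilde\d\ge\0$ and $\tilde d_i>0\iff d_i>0$, i.e.\ $\supp\tilde\d=\supp\d$. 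For $\tilde\b=s(\d-\mu\b)$: if $i\in\supp\b$ then $\tilde b_i=sb_i\bigl(\tfrac{d_i}{b_i}-\mu\bigr)\ge0$ by the choice of $\mu$, with equality exactly when $\tfrac{d_i}{b_i}=\mu$, so in particular $\tilde b_{i_0}=0$; if $i\in\supp\d\setminus\supp\b$ then $b_i=0$ and $\tilde b_i=sd_i>0$; and if $i\notin\supp\d$ then $b_i=d_i=0$, so $\tilde b_i=0$. Hence $\tilde\b\ge\0$ and
\[ \supp\d\setminus\supp\b\ \subseteq\ \supp\tilde\b\ \subseteq\ \supp\d=\supp\tilde\d, \]
while $i_0\in\supp\b\setminus\supp\tilde\b$. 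This is precisely the list of required properties.

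There is no genuine obstacle here. The only points that need care are: taking $\mu$ to be the \emph{minimum} of the ratios $d_i/b_i$ over $i\in\supp\b$, which is exactly what makes $\d-\mu\b$ pick up a zero at the minimizing index $i_0$ while staying nonnegative and, crucially, staying strictly positive on $\supp\d\setminus\supp\b$ (where $b_i=0$); and noticing that the particular relation among the four coefficients is what renders the $2\times2$ transformation orthogonal, so that $\b\b^T+\d\d^T$ is preserved. The degenerate case $\b=\0$ is excluded because its conclusion cannot hold, and it requires no argument.
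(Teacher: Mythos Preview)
Your argument is correct. The rotation $(\b\mid\d)\mapsto(\b\mid\d)U$ with
\[
U=\frac{1}{\sqrt{1+\mu^2}}\begin{pmatrix}-\mu & 1\\ 1 & \mu\end{pmatrix}
\]
and $\mu=\min_{i\in\supp\b}d_i/b_i$ does exactly what is needed, and your support bookkeeping is complete and accurate: nonnegativity of $\tilde\b$ uses minimality of $\mu$, the vanishing of $\tilde b_{i_0}$ uses that the minimum is attained, and the strict positivity of $\tilde b_i$ on $\supp\d\setminus\supp\b$ follows since $b_i=0$ there. Your remark that the stated conclusion forces $\b\neq\0$ is also correct.

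The paper does not give its own proof of this proposition; it simply quotes it as Observation~1 of Loewy and Tam \cite{LoewTam03}. Your construction is the standard one and is essentially what appears there, so there is nothing substantively different to compare.
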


In particular, if we start with a minimal cp-decomposition of $A$, and apply the previous proposition
repeatedly (at each step replacing a pair  of vectors whose equal supports are the  largest in size), we get:

\begin{proposition}\label{pro:difsupp}
Let $A\in \cp_n$. Then it has a minimal cp-decomposition $A=\sum_{i=1}^p \b_i\b_i^T$, $\b_i\in \R^n_+$, where $\supp\b_i$, $i=1, \dots, p$, are $p$
different sets.
\end{proposition}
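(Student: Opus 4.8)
The plan is to start from any minimal cp-decomposition $A=\sum_{i=1}^p\b_i\b_i^T$ with $\b_i\in\R^n_+$ and $p=\cpr(A)$, and to keep applying Proposition \ref{pro:pairmove} to well-chosen pairs of equal-support vectors. Since each application leaves the number of summands equal to $p$ and the sum equal to $A$ (with all vectors nonnegative), every decomposition produced along the way is again a minimal cp-decomposition of $A$; so everything reduces to showing that the procedure can be organized so as to terminate, at which point no two supports coincide.

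Concretely, at a generic step I would inspect the current decomposition, and if two supports still coincide, choose a set $T$ of largest cardinality among those that occur as $\supp\b_i$ for at least two indices $i$, and pick $j\neq k$ with $\supp\b_j=\supp\b_k=T$. (Here $T\neq\emptyset$, since a minimal cp-decomposition of $A\neq 0$ has no zero summand, so the last clause of Proposition \ref{pro:pairmove} is meaningful.) Applying Proposition \ref{pro:pairmove} with $\b=\b_j$, $\d=\b_k$ — legitimate because $\supp\b_j\subseteq\supp\b_k$ — yields $\tilde\b,\tilde\d\in\R^n_+$ with $\tilde\b\tilde\b^T+\tilde\d\tilde\d^T=\b_j\b_j^T+\b_k\b_k^T$, $\supp\tilde\d=T$, $\supp\tilde\b\subseteq T$, and $\supp\tilde\b\neq T$; hence $\supp\tilde\b\subsetneq T$. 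I then replace $\b_j$ by $\tilde\b$ and $\b_k$ by $\tilde\d$.

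To see that iterating this terminates, I would attach to each decomposition the integer vector $\mathbf c=(c_n,c_{n-1},\dots,c_0)$ defined by $c_\ell=\sum_{|S|=\ell}\max\{m(S)-1,0\}$, where $m(S)=|\{i\mid\supp\b_i=S\}|$; thus $\mathbf c=\mathbf 0$ exactly when the $p$ supports are pairwise distinct. In the step above, with $\ell=|T|$, the maximality of $T$ forces $c_{\ell'}=0$ for every $\ell'>\ell$, while $c_\ell\geq 1$. After the replacement, $m(T)$ has dropped by one (the new vector $\tilde\b$ has support strictly inside $T$, while $\tilde\d$ still has support $T$), so $c_\ell$ decreases by one; the coordinates above level $\ell$ remain $0$, and only coordinates below level $\ell$ can change. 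Hence $\mathbf c$ strictly decreases in the lexicographic order that reads $c_n$ first and $c_0$ last. Since each $c_\ell$ lies between $0$ and $p$, only finitely many values of $\mathbf c$ occur, so after finitely many steps we reach a minimal cp-decomposition with $\mathbf c=\mathbf 0$, i.e.\ with $p$ distinct supports, as required.

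The only genuine obstacle is the termination argument: the naive statistic — say, the number of indices that share their support with some other index — need not decrease under a single pair move, because the new, smaller support $\supp\tilde\b$ might duplicate a support already present in the decomposition. Always treating a largest repeated support first, and measuring progress lexicographically by support size from the top down, is exactly what circumvents this; the remaining points are routine checks that the hypotheses of Proposition \ref{pro:pairmove} hold at each step and that minimality is preserved.
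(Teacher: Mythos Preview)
Your proof is correct and follows exactly the approach the paper indicates: start from a minimal cp-decomposition and repeatedly apply Proposition~\ref{pro:pairmove} to a pair of vectors whose common support is of largest size. The paper states this in one sentence without justifying termination; your lexicographic potential $(c_n,\dots,c_0)$ makes that step explicit and rigorous, but the underlying idea is the same.
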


The next result is Theorem 5.6 in \cite{ShadShakSzyld14}. It implies that any cp-decomposition of a $3\times 3$ positive completely positive matrix $A$ can be replaced
by a cp-decomposition with the same number of summands, where all the summands are rank $1$ \emph{positive} matrices.  To state it,
we recall a definition from   \cite{ShadShakSzyld14}:
A nonnegative matrix $B$ is called \emph{nearly positive} if there exists a sequence $Q(\ell)$ of orthogonal matrices
converging to $I$ such that $Q(\ell)B>0$ for every $\ell$.

\begin{proposition} \label{pro:NP}
Let $B\in \R^{m\times 3}_+$. Then $B$ is nearly positive if and only if $B^TB>0$.
\end{proposition}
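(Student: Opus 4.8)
The plan is to prove the two implications separately. The forward one is immediate: if $B$ is nearly positive, pick a single orthogonal $Q$ with $C:=QB>0$ (one term of the defining sequence will do). Since $Q^TQ=I$ we have $B^TB=(QB)^T(QB)=C^TC$, and every entry $(C^TC)_{ij}=\sum_{k=1}^m C_{ki}C_{kj}$ is a sum of products of positive numbers, hence positive; convergence to $I$ plays no role in this direction.

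The substance is the converse: if $B^TB>0$ then $B$ is nearly positive. The hypothesis says the three columns $\mathbf c_1,\mathbf c_2,\mathbf c_3$ of $B$ are nonzero and satisfy $\mathbf c_i^T\mathbf c_j>0$ for all $i,j$; call a pair $(j,k)$ \emph{bad} when $(\mathbf c_j)_k=0$. I would look for orthogonal matrices of the form $Q(t)=\exp(tS)$ with $S$ skew-symmetric (and, if forced to, of the general analytic form $\exp(tS_1+t^2S_2+\cdots)$ with the $S_i$ skew-symmetric, which is what every analytic path through $I$ in the orthogonal group looks like). Then $Q(t)\to I$ as $t\to 0$; a coordinate with $(\mathbf c_j)_k>0$ stays positive for small $t$, while a bad coordinate is governed by $(e^{tS}\mathbf c_j)_k=t\,(S\mathbf c_j)_k+\tfrac{t^2}{2}(S^2\mathbf c_j)_k+\cdots$, so $Q(t)B>0$ for all small $t>0$ precisely when, for every bad pair $(j,k)$, the leading Taylor coefficient of $(e^{tS}\mathbf c_j)_k$ in $t$ is positive (and then the matrix never leaves the nonnegative orthant). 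Everything thus reduces to choosing the skew matrix, or path, so as to make this leading coefficient positive at all bad pairs.

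One first tries to solve $(S\mathbf c_j)_k>0$ at all bad pairs with a single skew $S$. By Gordan's theorem this strict homogeneous linear system in $S$ is solvable \emph{unless} there exist nonnegative vectors $\mathbf g_1,\mathbf g_2,\mathbf g_3$, not all zero, with $\mathbf g_j$ supported on the zero set of $\mathbf c_j$, such that $W:=\sum_j\mathbf g_j\mathbf c_j^T$ is symmetric; when the system is solvable $Q(t)=e^{tS}$ already finishes the proof. The obstructed case does occur: for $B=J_3-I_3$ the bad pairs are $(1,1),(2,2),(3,3)$ and $(S\mathbf c_1)_1+(S\mathbf c_2)_2+(S\mathbf c_3)_3=0$ for \emph{every} skew $S$, so no single $S$ works at first order --- yet the circulant skew matrix $S$ with first row $(0,1,-1)$ has $(S\mathbf c_j)_j=0$ and $\tfrac12(S^2\mathbf c_j)_j=1$ for each $j$, so $(e^{tS}\mathbf c_j)_j=t^2+O(t^3)$ and $e^{tS}(J_3-I_3)>0$ for small $t>0$. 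In general I would argue order by order: once the leading behaviour is pinned down so that some bad pairs are resolved and the rest are ``pending'' (all coefficients so far zero), choose the next skew matrix to keep each pending coefficient nonnegative --- mandatory, or the entry turns negative --- and strictly positive for at least one further pending pair, then iterate; this terminates because there are finitely many bad pairs. The hypothesis $\mathbf c_i^T\mathbf c_j>0$ is what rules out the configurations in which no progress could be made and what orients the higher-order corrections correctly, and this is the point at which having exactly three columns matters.

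The crux, then, is not any single inequality but the combination of this order-by-order bookkeeping with a case analysis on the joint zero/support pattern of $\mathbf c_1,\mathbf c_2,\mathbf c_3$ --- supports all equal, supports nested, the $J_3-I_3$-type ``balanced'' pattern, and mixtures of these --- checking in each case that $\mathbf c_i^T\mathbf c_j>0$ forces the first-order system to be feasible or else yields a valid second- (or, if necessary, higher-) order perturbation while all three columns stay nonnegative throughout. Making that case analysis clean and complete is the main obstacle.
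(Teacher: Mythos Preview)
The paper does not prove this proposition: it is quoted as Theorem~5.6 of \cite{ShadShakSzyld14}, so there is no in-paper argument to compare your attempt against.

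Your forward implication is complete and correct.

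For the converse you have an outline, not a proof. The overall strategy --- perturb along an analytic orthogonal path $Q(t)=\exp(tS_1+t^2S_2+\cdots)$ and force the leading nonzero Taylor coefficient of $(Q(t)\mathbf c_j)_k$ at each bad pair $(j,k)$ to be positive --- is sound, and your $J_3-I_3$ computation correctly shows that first order alone can fail while second order succeeds. But the crucial claim, that the hypothesis $\mathbf c_i^T\mathbf c_j>0$ together with having only three columns guarantees the order-by-order procedure can always make progress, is asserted and not established. Concretely: when Gordan's alternative yields nonnegative $\mathbf g_j$ supported on the zero set of $\mathbf c_j$ with $\sum_j\mathbf g_j\mathbf c_j^{T}$ symmetric, you must exhibit a next-order skew correction that resolves at least one further pending bad pair while keeping all previously resolved coefficients nonnegative, and you never do this in general --- nor do you verify that the procedure terminates rather than pushing every pending pair to ever higher order. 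You yourself identify the support-pattern case analysis as ``the main obstacle,'' and it is; until that analysis is actually carried out the converse remains unproved in your write-up. Since the proposition is stated specifically for three columns, one should expect the argument to exploit $n=3$ in an essential way, and your sketch does not yet pinpoint where that happens.
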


Next we mention results on the cp-rank involving graphs.
Note that if a matrix $B$ is in the orbit of a symmetric matrix $A\in \sym_n$, then $B$ is completely positive if and only if $A$ is, and
$\cpr(B)=\cpr(A)$. Thus we may symmetrically scale our matrices, and when considering graph theoretic results on the cp-rank,
we may re-label the vertices of the graph as we wish.
For a graph $G$, we define
\[\cpr{(G)}= \max\{ \cpr (A)| A \mbox{ is completely positive and } G(A)=G\}.\]
Basic results on the parameter $\cpr(G)$ were collected in \cite{Shak14}. The next proposition  appears there as  Lemmas 3.2.

\begin{proposition}\label{pro:subgraph cpr}
Let $G'$ be  a subgraph of a graph $G$. Then
$\cpr(G')\le \cpr(G)$.
\end{proposition}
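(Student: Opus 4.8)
The plan is to reduce the general case to the case where $G'$ is obtained from $G$ by deleting a single edge, and then handle that case directly. First I would observe that it suffices to prove: if $G'$ is a spanning subgraph of $G$ with $E(G') = E(G) \setminus \{e\}$ for a single edge $e$, then $\cpr(G') \le \cpr(G)$. Indeed, a general subgraph $G'$ of $G$ on a vertex subset can be reached from $G$ by a finite sequence of single-edge deletions together with deletions of isolated vertices; deleting an isolated vertex only removes a zero row and column and cannot increase the cp-rank, so the proposition follows by iterating the single-edge statement.

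So fix $G$ on vertex set $\{1,\dots,n\}$ and let $e = \{k,\ell\} \in E(G)$, with $G' = G - e$. Let $A'$ be any completely positive matrix with $G(A') = G'$; I want to produce a completely positive $A$ with $G(A) = G$ and $\cpr(A) \ge \cpr(A')$, which will give $\cpr(A') \le \cpr(G)$ and hence $\cpr(G') \le \cpr(G)$. The natural candidate is $A = A' + \varepsilon(\e_k + \e_\ell)(\e_k + \e_\ell)^T$ for small $\varepsilon > 0$: this is a sum of a completely positive matrix and a rank-one nonnegative matrix, hence completely positive, and for $\varepsilon$ small enough the only entries that change sign are the $(k,\ell)$ and $(\ell,k)$ entries, which go from $0$ to $\varepsilon > 0$, so $G(A) = G' \cup \{e\} = G$. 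It remains to check $\cpr(A) \ge \cpr(A')$ for suitably small $\varepsilon$.

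For the cp-rank inequality I would argue by a limiting/continuity argument combined with lower semicontinuity of the cp-rank under the relevant kind of limit. As $\varepsilon \to 0^+$, $A \to A'$. If $\cpr(A) \le \cpr(A') - 1 = p - 1$ held for a sequence $\varepsilon_j \to 0$, each corresponding $A_j$ would have a cp-factorization with at most $p-1$ columns; since all these matrices lie in a bounded region (their diagonal entries converge), one can extract convergent subsequences of the factors and pass to the limit to obtain a cp-factorization of $A'$ with at most $p-1$ columns, contradicting $\cpr(A') = p$. The main obstacle here is making the compactness step rigorous: one must bound the norms of the columns $\b_i$ in the factorizations of $A_j$ uniformly, which follows because $\|\b_i\|^2 \le \trace(A_j) \to \trace(A')$, so the factors stay in a compact set and Bolzano–Weierstrass applies; the limit factorization then has nonnegative entries and reproduces $A'$, giving the needed contradiction. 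Alternatively, one can invoke the known fact (recorded in \cite{Shak14}) that the cp-rank does not drop under sufficiently small perturbations within $\cp_n$ that only add a nonnegative rank-one term with support contained in an edge, which is exactly the content being packaged here.
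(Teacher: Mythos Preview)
The paper does not actually prove this proposition: it is quoted from \cite{Shak14} (Lemma~3.2 there) and stated without argument. So there is no in-paper proof to compare against directly; I can only assess your argument on its own terms.

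Your proof is correct. The reduction to a single edge deletion (together with removal of isolated vertices) is valid, and the key step---showing that $\cpr(A'+\varepsilon(\e_k+\e_\ell)(\e_k+\e_\ell)^T)\ge\cpr(A')$ for small $\varepsilon>0$---is exactly lower semicontinuity of the cp-rank on $\cp_n$, which you justify properly via the compactness argument (the bound $\|\b_i\|^2\le\trace(A_j)$ gives uniform boundedness of the factors, and a subsequential limit yields a cp-factorization of $A'$ with at most $p-1$ columns). One small wording issue: when you say ``deleting an isolated vertex only removes a zero row and column,'' the diagonal entry need not be zero; but what you need is the graph-level statement $\cpr(G-v)\le\cpr(G)$ for $v$ isolated, and that follows by padding any $A'$ with $G(A')=G-v$ by a zero row and column, so the conclusion stands. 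The closing appeal to a fact ``recorded in \cite{Shak14}'' is a bit circular since that is precisely where the proposition itself comes from, but it is offered only as an alternative and your main argument is self-contained.
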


In particular, Proposition \ref{pro:subgraph cpr} implies that $\cpr(G)\le p_n$ for every graph $G$ on $n$ vertices.
Another relevant result is the following, which is Lemma 3.3 in \cite{BermanShaked03}.

\begin{proposition}\label{pro:d(v)<3}
Let a graph $G$ have a non-isolated vertex $v$ with $d(v)\le 2$. Then
\[\cpr(G)\le d(v)+\cpr(G-v). \]
\end{proposition}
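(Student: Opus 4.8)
The plan is to prove the inequality by bounding $\cpr(A)$ for an arbitrary completely positive $A$ with $G(A)=G$ and then taking the maximum over such $A$. Write $v$ for the distinguished vertex, let $N(v)$ be its set of neighbours ($|N(v)|=d(v)\le 2$), and set $N[v]=N(v)\cup\{v\}$, so $|N[v]|=d(v)+1\le 3$. Fix any cp-decomposition $A=\sum_{i=1}^p\b_i\b_i^T$. The first observation is that nonnegativity forbids cancellation: if $b_{i,v}>0$ and $j\notin N[v]$, then $0=a_{jv}=\sum_k b_{k,j}b_{k,v}\ge b_{i,j}b_{i,v}$ forces $b_{i,j}=0$. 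Hence every column that touches $v$ (i.e.\ has $b_{i,v}>0$) is supported in $N[v]$. I would let $I$ collect these indices and set $C=\sum_{i\in I}\b_i\b_i^T$, a completely positive matrix supported on $N[v]$; the remaining sum $A-C$ has zero $v$-row and $v$-column. The whole statement then reduces to the local claim that $C$ admits a decomposition
\[ C=\sum_{l=1}^{d(v)}\w_l\w_l^T+D,\qquad \w_l\in\R^n_+\ \text{supported in }N[v],\quad D\ge 0\ \text{diagonal},\ \supp D\subseteq N(v). \]
Granting this, $A=\sum_{l=1}^{d(v)}\w_l\w_l^T+(A-C+D)$, where $A-C+D$ is completely positive (a completely positive matrix plus a nonnegative diagonal) with zero $v$-row and $v$-column, so it equals $R\oplus 0$ with $R$ completely positive and $G(R)$ a subgraph of $G-v$. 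By Proposition~\ref{pro:subgraph cpr}, $\cpr(R)\le\cpr(G-v)$, whence $\cpr(A)\le d(v)+\cpr(G-v)$.

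The easy cases of the local claim follow from standard inequalities. If $d(v)=1$, say $N(v)=\{j\}$, a single $\w=t\e_j+s\e_v$ with $s^2=C_{vv}$ and $ts=C_{jv}$ reproduces the $v$-row of $C$, and $D=(C_{jj}-t^2)\e_j\e_j^T$ is nonnegative precisely because $C_{jv}^2\le C_{jj}C_{vv}$ (Cauchy--Schwarz, i.e.\ positive semidefiniteness of $C[\{j,v\}]$). If $d(v)=2$ with neighbours $j_1,j_2$ non-adjacent in $G$ (so $C_{j_1j_2}=0$), I would take $\w_1$ supported on $\{j_1,v\}$ and $\w_2$ on $\{j_2,v\}$; matching the $v$-row requires $s_1^2+s_2^2=C_{vv}$, $t_1s_1=C_{j_1v}$, $t_2s_2=C_{j_2v}$, and the residual diagonal at $j_1,j_2$ is nonnegative exactly when $C_{j_1v}^2/C_{j_1j_1}+C_{j_2v}^2/C_{j_2j_2}\le C_{vv}$, which is the Schur-complement (positive semidefiniteness) condition for the path matrix $C$.

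The hard part will be the remaining case $d(v)=2$ with $j_1,j_2$ adjacent, i.e.\ $C$ has the full triangle as its graph. Here the entries $C_{vv},C_{j_1v},C_{j_2v},C_{j_1j_2}$ must be matched \emph{exactly} by the two rank-one terms, since $D$ can only decrease $C_{j_1j_1}$ and $C_{j_2j_2}$. I would recast this as a planar moment problem: dividing each column $\b_i$ ($i\in I$) by its positive $v$-coordinate places it at a point $(x_i,y_i)$ of the nonnegative quadrant carrying a weight, so the four constraints prescribe the mass, the two first moments, and the mixed second moment of the resulting measure, while the two pure second moments need only not be exceeded. The task becomes finding a two-atom measure in the quadrant with the same mass, first moments, and mixed moment. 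Writing the two atoms as the centroid displaced along a direction $\d$, the mixed moment is governed by the product of the coordinates of $\d$ and the pure second moments by its squared coordinates; the required magnitudes are simultaneously attainable because the covariance of the original measure satisfies $\mathrm{Cov}^2\le\mathrm{Var}_x\,\mathrm{Var}_y$ (again Cauchy--Schwarz, inherited from positive semidefiniteness of $C$). The delicate point I expect to require the most care is keeping both atoms in the nonnegative quadrant: when the mixed moment is large one may send one atom to infinity along a quadrant ray with the other tending to the centroid, and when it is small one may place an atom on a coordinate axis, a short case check then confirming that the pure-second-moment bounds can be met. Proposition~\ref{pro:pairmove} offers an alternative, more combinatorial route to the same reductions, by repeatedly merging pairs of $v$-touching columns with nested supports until at most $d(v)$ of them remain; I would use the moment formulation as the primary argument and fall back on support manipulation for the degenerate configurations.
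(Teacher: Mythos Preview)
The paper does not supply a proof of this proposition; it is quoted as Lemma~3.3 of \cite{BermanShaked03}. Your overall architecture---split off the columns of a cp-factorization that touch $v$, prove a local decomposition of their sum $C$ on $N[v]$, and reassemble---is the standard reduction, and your treatment of $d(v)=1$ and of $d(v)=2$ with non-adjacent neighbours is correct.

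For the triangle case your moment-problem reformulation is valid and can in fact be completed, but you have made it harder than necessary and left the crucial step as a sketch. The quadrant constraint when the covariance $\sigma_{xy}$ is negative requires the observation that $|\sigma_{xy}|\le\bar x\,\bar y$ (because $E[xy]\ge 0$ for a measure supported on $\R^2_+$); without making this explicit, the ``short case check'' has real content, and ``sending one atom to infinity'' must be read as ``placing one atom far out with small weight'' rather than as a limit. A much cleaner argument, using results already cited in the paper, goes as follows: if $\rank C\le 2$ then $\cpr(C)\le 2$ by the rank bound of \cite{HannaLaffey83,BarioliBerman03}, so $C=\w_1\w_1^T+\w_2\w_2^T$ and $D=0$; if $\rank C=3$, set $t^*=\max\{t\ge 0:\ C-tE_{j_1j_1}\text{ is positive semidefinite}\}$, so that $C-t^*E_{j_1j_1}$ is doubly nonnegative of rank at most~$2$ (only a diagonal entry is reduced, and it stays nonnegative by positive semidefiniteness), hence has cp-rank at most~$2$, and $D=t^*E_{j_1j_1}$ works. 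This replaces the entire moment discussion.
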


Several known bounds on the cp-rank of a matrix were given in terms of the its graph. One such example
is the next proposition, originally Theorem 6 in \cite{DrewJohnsonLoewy94}.

\begin{proposition}\label{pro:tf}
Let $G$ be a triangle free graph on $n$ vertices. If $A$ is a completely positive matrix with $G(A)=G$, then
\[\cpr(A)=\max(n, |E(G)|).\]
In particular, $\cpr(A)\le \frac{n^2}{4}$.
\end{proposition}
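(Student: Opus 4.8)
The bound ``$\cpr(A)\le n^2/4$'' is immediate from the equality and Mantel's theorem (a triangle free graph on $n$ vertices has at most $\lfloor n^2/4\rfloor$ edges), so the plan is to prove the two matching inequalities $\cpr(A)\le\max(n,|E(G)|)$ and $\cpr(A)\ge\max(n,|E(G)|)$, the first for every completely positive $A$ with $G(A)=G$. The cornerstone is a support restriction: in any cp-decomposition $A=\sum_k\b_k\b_k^T$ with $\b_k\ge\0$, if $i,j\in\supp\b_k$ then $a_{ij}\ge(\b_k)_i(\b_k)_j>0$, so $ij\in E(G)$; hence $\supp\b_k$ always induces a clique of $G$, and since $G$ is triangle free, every $\b_k$ is supported either on a single vertex or on a single edge.

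For the lower bound, note that only a vector supported on exactly $\{i,j\}$ can contribute to the off-diagonal entry $a_{ij}>0$; consequently each of the $|E(G)|$ edges forces a vector of its own, and distinct edges force distinct vectors, giving $\cpr(A)\ge|E(G)|$ for \emph{every} such $A$. The remaining part of the lower bound, $\cpr\ge n$, is only binding when $G$ is a tree (so that $|E(G)|=n-1<n$); there I would use $\cpr(A)\ge\rank(A)$ together with the existence of a nonsingular completely positive matrix with graph $G$, which attains $\rank=n$ and hence the value $\max(n,|E(G)|)=n$.

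The substance is the upper bound. First I would take any cp-decomposition, merge vectors of equal support, and apply Propositions \ref{pro:pairmove} and \ref{pro:difsupp} to reach a decomposition with pairwise distinct supports: at most one \emph{edge-vector} per edge and at most one \emph{vertex-vector} per vertex. Since $a_{ij}>0$ forces an edge-vector on every edge, all $|E(G)|$ edges are occupied, and it remains only to eliminate the vertex-vectors. Applying Proposition \ref{pro:pairmove} to a vertex-vector at $i$ (support $\{i\}$) and the edge-vector on an incident edge $\{i,j\}$ slides the vertex-vector from $i$ to $j$ while leaving that edge occupied; if $j$ already carries a vertex-vector, the two equal-support vectors merge and the total count drops by one. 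Because $G$ is connected, all vertex-vectors can be gathered along edges and merged down to at most one, leaving at most $|E(G)|+1$ vectors.

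If $G$ is a tree this already yields $\cpr(A)\le(n-1)+1=n$. When $G$ contains a cycle we have $|E(G)|\ge n$ and the target is $|E(G)|$, so the last vertex-vector must be removed entirely (and the resulting equality $\cpr(A)=|E(G)|$ will then hold for \emph{every} such $A$). This is the step I expect to be the main obstacle. The plan is to slide the residual vertex-vector onto a vertex $v$ lying on a cycle and then redistribute the multiplicatively constrained edge-vector entries around that cycle: raising the share of $a_{vv}$ carried by one incident edge forces a compensating adjustment that propagates alternately around the cycle and returns to $v$, the cycle supplying exactly the one extra degree of freedom needed to absorb the vertex-vector at $v$ while holding every other diagonal entry and every off-diagonal entry fixed. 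Equivalently, one must solve the $n$ diagonal-matching equations in the $|E(G)|$ positive edge parameters with no leftover mass; the difficulty is not counting equations against unknowns but the \emph{nonnegativity} of the solution, which I would establish by a continuity argument on the cone of feasible edge parameters, using that $A$ is completely positive to guarantee interior room. This also explains why a naive induction via Proposition \ref{pro:d(v)<3} cannot close the argument: triangle free graphs such as $K_{3,3}$ have minimum degree $3$ and no low-degree vertex to peel off, so the cycle-redistribution step is unavoidable.
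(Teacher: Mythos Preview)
The paper does not prove this proposition; it is quoted as Theorem~6 of \cite{DrewJohnsonLoewy94}, and the only information given about the original argument is the remark following the statement: the proof in \cite{DrewJohnsonLoewy94} shows that every completely positive matrix with triangle-free graph lies in the orbit of a diagonally dominant matrix. By Proposition~\ref{pro:being dd} that fact is equivalent to your ``cornerstone'' observation that every vector in any cp-decomposition has support of size at most~$2$, so at this level your plan and the cited proof coincide.

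Your lower bound $\cpr(A)\ge|E(G)|$ is correct, and Steps~1--3 (distinct supports, then sliding vertex-vectors along edges and merging) cleanly yield $\cpr(A)\le|E(G)|+1$ for connected~$G$. The gap is Step~4. The cycle redistribution you describe amounts to solving a fixed-point equation for a composition of M\"obius maps $x\mapsto c_i-\alpha_i^{2}/x$ around the cycle, subject to the constraint that every intermediate value stay positive. Your appeal to ``continuity'' and to ``interior room because $A$ is completely positive'' does not establish this: complete positivity has already been used to produce the initial decomposition and does not by itself control the feasibility region of the redistribution. The idea is sound and the step can be completed (for instance by analysing the fixed-point equation directly, or by a diagonal-scaling argument), but what you have written is a plan, not a proof.

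A side remark: the printed equality $\cpr(A)=\max(n,|E(G)|)$ cannot hold for every $A$ when $|E(G)|<n$; on the path $1$--$2$--$3$ the matrix $A=(1,1,0)^{T}(1,1,0)+(0,1,1)^{T}(0,1,1)$ has $\cpr(A)=2<3$. You handle this correctly by proving $\cpr(A)\ge n$ only for a chosen nonsingular~$A$; the proposition should be read as the upper bound $\cpr(A)\le\max(n,|E(G)|)$ together with $\cpr(G)=\max(n,|E(G)|)$, which is how the paper actually uses it.
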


A matrix $A\in \R^{n\times n}$
is \emph{diagonally dominant} if $|a_{ii}|\ge \sum_{j\ne i}|a_{ij}|$ for every $i=1, \dots, n$.
For the proof of the previous result
it was shown in \cite{DrewJohnsonLoewy94} that every matrix whose graph is triangle free is in the orbit of a
diagonally dominant matrix.

\begin{proposition}\label{pro:being dd}
Let $A\in \sym_n$ be nonnegative. Then the following are equivalent:
\begin{enumerate}
\item[{\rm(a)}] $A$ is in the orbit of a diagonally dominant matrix.
\item[{\rm(b)}] $A=\sum_{i=1}^k\b_i\b_i^T$, where $\b_i\in \R^n_+$ and $|\supp\b_i|\le 2$ for every $i$.
\end{enumerate}
\end{proposition}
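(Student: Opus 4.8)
The plan is to treat the two implications separately; the substance lies in a positive‑semidefiniteness fact hidden inside condition~(b). Throughout I use that (b) is invariant under the orbit action: if $A=\sum_i\b_i\b_i^T$ then $PDADP^T=\sum_i(PD\b_i)(PD\b_i)^T$ for any permutation matrix $P$ and positive diagonal $D$, and $|\supp(PD\b_i)|=|\supp\b_i|$.

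For (a)$\,\Rightarrow\,$(b): by the invariance just noted I may assume $A$ is itself diagonally dominant. Then for each pair $i<j$ I set $\b^{(ij)}=\sqrt{a_{ij}}\,(\e_i+\e_j)$ (the zero vector when $a_{ij}=0$), and for each $i$ I set $\b^{(i)}=\sqrt{a_{ii}-\sum_{j\ne i}a_{ij}}\,\e_i$, the radicand being nonnegative precisely because $A$ is nonnegative and diagonally dominant. A check of diagonal and off‑diagonal entries gives $A=\sum_{i<j}\b^{(ij)}(\b^{(ij)})^T+\sum_i\b^{(i)}(\b^{(i)})^T$, and every vector used has support of size at most $2$. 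This direction is routine bookkeeping.

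For (b)$\,\Rightarrow\,$(a): write $A=\sum_{i=1}^k\b_i\b_i^T$ with $\b_i\in\R^n_+$, $|\supp\b_i|\le2$, and let $B$ be the matrix obtained from $A$ by negating all off‑diagonal entries; equivalently $B=2\Delta-A$ with $\Delta$ the diagonal part of $A$. The key observation is that $B$ is positive semidefinite: for $|\supp\b_i|\le2$, the matrix obtained from $\b_i\b_i^T$ by negating its off‑diagonal entries equals $\u_i\u_i^T$, where $\u_i$ is $\b_i$ with one of its (at most two) nonzero coordinates sign‑reversed, and summing these identities over $i$ yields $B=\sum_i\u_i\u_i^T\succeq0$. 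Now $B$ is a symmetric positive semidefinite matrix whose off‑diagonal entries $-a_{pq}$ are nonpositive, i.e.\ a symmetric $M$‑matrix, and such a $B$ admits a vector $x>0$ with $Bx\ge0$: split according to the connected components of $G(B)$; on a component $B$ restricts to a nonnegative scalar or to an irreducible positive semidefinite $Z$‑matrix $sI-C$ with $C\ge0$ and $s\ge\rho(C)$, whose Perron eigenvector $w>0$ satisfies $(sI-C)w=(s-\rho(C))w\ge0$; concatenating these over the components gives $x>0$ with $Bx\ge0$. Finally put $D=\diag(x_1,\dots,x_n)$: since $A\ge0$, diagonal dominance of $DAD$ amounts to $x_p^2a_{pp}-\sum_{q\ne p}x_px_qa_{pq}\ge0$ for every $p$, and the left‑hand side equals $x_p\,(Bx)_p\ge0$ because $B=\Delta-N$ with $N$ the off‑diagonal part of $A$. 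Hence $DAD$ is diagonally dominant and $A$ lies in the orbit of a diagonally dominant matrix.

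The step I expect to be the main obstacle is exactly the middle of the second implication: converting the \emph{existence} of a support‑$\le2$ decomposition into an \emph{explicit} scaling $D$. The unlocking insight is that such a decomposition forces $2\Delta-A$ to be positive semidefinite, after which one only needs the standard fact (Perron--Frobenius, component by component) that a positive semidefinite $Z$‑matrix is semipositive, i.e.\ possesses a strictly positive $x$ with $Bx\ge0$. The remaining ingredients --- the one‑line identity for $\b_i\b_i^T$ with off‑diagonals negated, and the entrywise verification that $DAD$ is diagonally dominant --- are mechanical.
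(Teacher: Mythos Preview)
The paper states this proposition as a preliminary fact without proof (it is folklore surrounding the Drew--Johnson--Loewy paper), so there is no argument in the paper to compare against. Your proof is correct in both directions. The forward direction is the standard explicit decomposition. For the reverse direction, your route---showing that the comparison matrix $B=2\Delta-A$ is positive semidefinite via the sign-flip identity $\b_i\b_i^T\mapsto\u_i\u_i^T$, then invoking the $M$-matrix/Perron--Frobenius fact that a PSD $Z$-matrix is semipositive to produce the scaling $D$---is clean and complete; the componentwise Perron argument handles the possibly reducible case correctly, and the final entrywise check that $(DAD)_{pp}-\sum_{q\ne p}(DAD)_{pq}=x_p(Bx)_p\ge 0$ is exactly right.
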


The following  generalization of Proposition \ref{pro:tf} to matrices with any graph was proved in \cite{BermanShaked98}.

\begin{proposition}\label{pro:cpr for dd}
Let a nonnegative $A\in \sym_n$ be in the orbit of a diagonally dominant and nonnegative matrix. Then $\cpr(A)\le \frac{n^2}{4}$.
\end{proposition}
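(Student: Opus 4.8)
The plan is to reduce to the situation where $A$ itself is nonnegative and diagonally dominant (the cp-rank being constant along an orbit), and then to induct on $n$. It is convenient to prove the slightly rephrased bound $\cpr(A)\le\max(n,\lfloor n^2/4\rfloor)$, which is what is really needed: for $n\le 3$ the literal inequality fails (e.g.\ $\cpr(2I_3+J_3)=3$), while for $n\ge 4$ the two coincide. The base case $n\le 4$ holds because $\cpr(A)\le p_4=4$. The main tool in the inductive step is the standard cp-decomposition of a nonnegative diagonally dominant $A$,
\[ A=\sum_{ij\in E(G)}a_{ij}(\e_i+\e_j)(\e_i+\e_j)^T+\sum_{i=1}^n c_i\,\e_i\e_i^T,\qquad c_i=a_{ii}-\sum_{j\ne i}a_{ij}\ge 0, \]
where $G=G(A)$; note that principal submatrices of nonnegative diagonally dominant matrices are again nonnegative and diagonally dominant.

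For the inductive step ($n\ge 5$) I would distinguish three cases according to $G$. If $G$ is triangle free, Proposition \ref{pro:tf} gives $\cpr(A)=\max(n,|E(G)|)$ and Mantel's theorem gives $|E(G)|\le\lfloor n^2/4\rfloor$, so we are done. If $G$ has a non-isolated vertex $v$ with $d(v)\le\lfloor n/2\rfloor$ (an isolated vertex is split off directly, using $\lfloor n^2/4\rfloor-\lfloor(n-1)^2/4\rfloor=\lfloor n/2\rfloor\ge 2$), I would fix a neighbour $j_0$ of $v$ and remove the rank-one terms $a_{vj}(\e_v+\e_j)(\e_v+\e_j)^T$ for $j\in N(v)\setminus\{j_0\}$; a direct check with diagonal dominance shows the remaining matrix $A_1$ is again nonnegative and diagonally dominant, with $v$ of degree $1$ in $G(A_1)$. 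Peeling $v$ off $A_1$ with a single rank-one vector supported on $\{v,j_0\}$ (the argument behind Proposition \ref{pro:d(v)<3}) leaves a nonnegative diagonally dominant matrix $A_2$ of order $n-1\ge 4$, and the induction hypothesis gives
\[ \cpr(A)\le(d(v)-1)+\cpr(A_1)\le(d(v)-1)+1+\cpr(A_2)\le d(v)+\lfloor(n-1)^2/4\rfloor\le\lfloor n/2\rfloor+\lfloor(n-1)^2/4\rfloor=\lfloor n^2/4\rfloor. \]

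The remaining case — $G$ has minimum degree at least $\lfloor n/2\rfloor+1$ — is, I expect, the main obstacle, since the degree-based reduction above breaks down exactly here. Equivalently, the complement $\bar G$ has maximum degree at most $\lceil n/2\rceil-2$, hence is sparse; in particular $\chi(\bar G)\le\lceil n/2\rceil-1$, which partitions the vertices into few cliques of $G$. For $n=6$ this case is a short finite check: $\bar G$ has maximum degree $\le 1$, so $\bar G$ is a matching and $G$ is one of $K_6$, $K_6-e$, $K_6-2K_2$, $K_6-3K_2=K_{2,2,2}$, and for each such $G$ one exhibits a cp-factorization of $A$ with at most $9$ columns. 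For general $n$ a uniform argument is needed, and the extreme instance is $G=K_n$, where $A$ is entrywise positive; there I would try to peel off multiples of $J_n$ while staying inside the diagonally dominant cone, reducing to the constant case $aI_n+J_n$, for which an explicit factorization (with columns $\sqrt a\,\e_i+v\mathbf 1$) gives $\cpr(aI_n+J_n)\le n\le\lfloor n^2/4\rfloor$ for $n\ge 4$. The technical heart is to make such a peeling argument work uniformly — either directly for all entrywise-positive diagonally dominant matrices, or, using that a sparse $\bar G$ forces $G$ to contain a large complete multipartite graph, by contracting pairs of $G$-nonadjacent vertices (the edges of $\bar G$) while keeping the cp-rank under control.
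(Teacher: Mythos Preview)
This proposition is not proved in the paper; it is quoted from \cite{BermanShaked98} as a known result. So there is no in-paper argument to compare against, and your proposal is an independent attempt.

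Your first two cases are handled correctly: the triangle-free case via Proposition~\ref{pro:tf} and Mantel, and the low-degree case via edge-peeling followed by the degree-$1$ reduction behind Proposition~\ref{pro:d(v)<3}. (The verification that subtracting $a_{vj}(\e_v+\e_j)(\e_v+\e_j)^T$, and then the single rank-one term supported on $\{v,j_0\}$, keeps the matrix nonnegative and diagonally dominant does go through, using $(A_1)_{vv}\ge a_{vj_0}$ in the last step.)

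The genuine gap is Case 3, minimum degree $\ge\lfloor n/2\rfloor+1$, which you yourself flag as ``the main obstacle'' and do not resolve. The $J_n$-peeling idea does not close the induction: subtracting $cJ_n$ with $c=\min_{i\ne j}a_{ij}$ generically kills a single off-diagonal entry, after which the minimum degree can still exceed $\lfloor n/2\rfloor$ (already for $n=6$, one missing edge leaves minimum degree $4$), so you cannot return to Case 2 --- and you cannot subtract $J_n$ again because an entry is now zero. Hence the claimed reduction of the positive case to $aI_n+J_n$ is unjustified. The contraction suggestion is only a phrase with no control on the cp-rank, and the ``short finite check'' for $n=6$ is asserted rather than carried out. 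This is not a cosmetic omission: the dense case is precisely where the bound $\lfloor n^2/4\rfloor$ has content, since for graphs with a vertex of small degree or with few edges the bound already follows from the edge count.
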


In \cite{Shak14} it is shown that $\cpr(G)\ge \tf(G)$ for every connected graph $G$, and some cases where equality holds are discussed.
An \emph{outerplanar graph} is a graph that can be drawn in the plane so that no two edges cross, and all the vertices lie on the boundary of the outer face.
For such graphs the following was proved \cite[Theorem 5.7]{Shak14}.

\begin{proposition}\label{pro:outerplanar}
Every connected outerplanar graph $G$ on $n$ vertices with $\tf(G)\ge n$ satisfies $\cpr(G)=\tf(G)$.
\end{proposition}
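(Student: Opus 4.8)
Since the lower bound $\cpr(G)\ge\tf(G)$ for connected $G$ is already available, the task is the matching upper bound. I would in fact prove the cleaner statement that $\cpr(G)\le\max(n,\tf(G))$ for every connected outerplanar graph $G$ on $n$ vertices; under the hypothesis $\tf(G)\ge n$ the right-hand side equals $\tf(G)$, and together with the lower bound this yields $\cpr(G)=\tf(G)$. The proof is by induction on $n$. If $G$ is triangle-free then $\tf(G)=|E(G)|$ and Proposition~\ref{pro:tf} gives $\cpr(A)=\max(n,|E(G)|)=\max(n,\tf(G))$ for every completely positive $A$ with $G(A)=G$, settling this case; so from now on I may assume $G$ has a triangle.

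A connected outerplanar graph decomposes along its block--cut tree, each block being an edge or a $2$-connected outerplanar graph, i.e.\ a polygon with non-crossing chords whose bounded faces (cells) form the nodes of a tree (the weak dual). If $G$ has a vertex of degree $1$, I remove such a leaf $v$: then $\tf(G)=\tf(G-v)+1$, $G-v$ is connected outerplanar, and the induction applies with an increment of exactly $1$. Otherwise $G$ has minimum degree $2$, and any leaf block, being $2$-connected, contains a degree-$2$ vertex $v$ other than its cut vertex; such a $v$ is a non-cut vertex of $G$ with $G-v$ still connected outerplanar. Within its block $v$ is a private vertex of one cell: its two neighbours $u,w$ are non-adjacent when that cell is a polygon ($k\ge 4$), and adjacent---joined by a chord $uw$, so that $v$ is the ear of a triangle---when the cell is a triangle. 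Accordingly $\tf(G)-\tf(G-v)=2$ in the polygon case, while in the triangle case it is $2$ or $1$, the latter precisely when $uw$ lies in every maximum triangle-free subgraph of $G-v$.

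The heart is to reduce $\cpr$ by no more than the amount the invariant $\max(n,\tf)$ permits. Removing $v$ amounts to peeling off terms of a cp-decomposition supported on $\{u,v\}$, $\{v,w\}$ or on the clique $\{u,v,w\}$, and a short computation shows that what is needed is a bound $\cpr(A)\le\gamma+\cpr(A')$ with $A'$ completely positive, $G(A')\subseteq G-v$, and $\gamma+\max(n-1,\tf(G-v))\le\max(n,\tf(G))$. When the budget is generous ($\tf(G)>n$, or $v$ in a polygon cell) the crude estimate $\gamma\le d(v)\le 2$ of Proposition~\ref{pro:d(v)<3} already suffices; the delicate situation is the tight one, in which $\gamma=1$ is forced. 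I would then select, when it exists, a triangular leaf cell (a counting argument shows one is available exactly in the tight regime, e.g.\ when $\tf(G)=n$ all but one cell is a triangle, and a tree with $\ge 2$ cells has a triangular leaf) and remove its ear $v$ by a single rank-one extraction: subtract the term $\b\b^T$ with $\b=\tfrac{a_{uv}}{\sqrt{a_{vv}}}\,\e_u+\sqrt{a_{vv}}\,\e_v+\tfrac{a_{vw}}{\sqrt{a_{vv}}}\,\e_w$, which exactly annihilates the row and column of $v$ at the cost of a single summand.

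The main obstacle is to guarantee that $A-\b\b^T$ is again completely positive with graph inside $G-v$, so that this really costs only $\gamma=1$. Its $(u,w)$ entry drops to $a_{uw}-a_{uv}a_{vw}/a_{vv}$; this stays nonnegative exactly when the chord is ``heavy'' ($a_{uw}\ge a_{uv}a_{vw}/a_{vv}$), and even then one must certify complete positivity of the residual matrix, which I would do by analysing the $3\times 3$ positive principal submatrix $A[u,v,w]$ together with the neighbouring cell and invoking Proposition~\ref{pro:NP} to rewrite the local decomposition using rank-one \emph{positive} terms and to transfer the diagonal mass at $u$ and $w$ into the adjacent cell. When the chord is ``light'' ($a_{uw}<a_{uv}a_{vw}/a_{vv}$) the single extraction overshoots the $(u,w)$ entry and a separate, more careful redistribution is required. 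That naive reductions are genuinely insufficient here is already visible on the smallest interesting example, the graph $C_5$ with one added chord, where $\tf=5$ but iterating Proposition~\ref{pro:d(v)<3} only yields the bound $6$; the whole difficulty of the theorem is concentrated in carrying out these cp-preserving rank-one extractions so that $\cpr$ decreases in lockstep with $\max(n,\tf)$. Once this extraction lemma is in place the induction closes, giving $\cpr(G)\le\max(n,\tf(G))$, hence $\cpr(G)=\tf(G)$ whenever $\tf(G)\ge n$.
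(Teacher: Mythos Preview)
This proposition is not proved in the present paper; it is quoted as Theorem~5.7 of \cite{Shak14}, so there is no in-paper argument to compare against. Judging your attempt on its own merits: the inductive architecture and the combinatorial bookkeeping of $\tf(G)-\tf(G-v)$ are sound, but the proposal has a genuine gap precisely where you yourself flag it.

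The ``extraction lemma''---that one may subtract the single rank-one term $\b\b^T$ supported on the ear triangle and obtain a matrix that is again completely positive with graph contained in $G-v$---is the whole content of the result, and you do not establish it. The residual $(A-\b\b^T)$ restricted to the indices other than $v$ is the Schur complement of $A$ with respect to position $v$, and Schur complements of completely positive matrices are \emph{not} completely positive in general; nonnegativity of the $(u,w)$ entry (your ``heavy chord'' condition) is necessary but far from sufficient. Your invocation of Proposition~\ref{pro:NP} together with ``transferring diagonal mass into the adjacent cell'' is a gesture toward a mechanism, not an argument, and the ``light chord'' case is left entirely open. You correctly observe that the naive degree-$2$ bound of Proposition~\ref{pro:d(v)<3} already fails on $C_5$ plus one chord, which is exactly why a nontrivial step is needed here; but the proposal does not supply one. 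As it stands this is an outline with the decisive lemma missing, not a proof.
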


A \emph{wheel} is a graph which consists of a cycle and one additional vertex adjacent to all vertices of the cycle. The wheel  on $n$ vertices is denoted by $W_n$. It is not
outerplanar, but it too satisfies $\cpr(W_n)=\tf(W_n)$, by \cite[Theorem 5.9]{Shak14}.

\begin{proposition}\label{pro:cprWn}
For $n\ge 4$,
\[\cpr(W_n)=\tf(W_n)=\left\{\begin{array}{ll}
                              \displaystyle{\frac{3n-3}{2}}\quad & n ~\rm{ is  ~ odd}\\
                              &\\
                              \displaystyle{\frac{3n-4}{2}} \quad  & n  ~\rm{ is  ~ even}
                            \end{array}
 \right.\]
\end{proposition}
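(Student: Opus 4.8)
My plan is to prove the two displayed equalities separately: first $\tf(W_n)=(n-1)+\lfloor(n-1)/2\rfloor$ (which equals $(3n-3)/2$ for odd $n$ and $(3n-4)/2$ for even $n$), and then the two inequalities $\tf(W_n)\le\cpr(W_n)$ and $\cpr(W_n)\le\tf(W_n)$. For the combinatorial part, fix the hub $h$ and label the rim cycle $v_1,\dots,v_{n-1}$ cyclically. For $n\ge 5$ every triangle of $W_n$ is a spoke triangle $\{h,v_i,v_{i+1}\}$, so a subgraph $H$ is triangle free iff for no rim edge $v_iv_{i+1}\in E(H)$ are both $hv_i,hv_{i+1}\in E(H)$. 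Writing $S=\{i: hv_i\in E(H)\}$, the number of rim edges $H$ can keep is at most $(n-1)-|E(C_{n-1}[S])|$, so $|E(H)|\le |S|+(n-1)-|E(C_{n-1}[S])|=(n-1)+c(S)$, where $c(S)$ is the number of components of the subgraph induced by $S$ on the rim cycle (using $|E(C_{n-1}[S])|=|S|-c(S)$ when $S$ is a proper subset, the full set giving the weaker value $n-1$). Since consecutive $S$-components must be separated by a rim vertex outside $S$, $c(S)\le\lfloor(n-1)/2\rfloor$, and taking $S$ a maximum independent set of the rim cycle together with all $n-1$ rim edges realizes equality; the case $n=4$ is the direct computation $\tf(K_4)=4$.

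For $\tf(W_n)\le\cpr(W_n)$ I would use $\tf(W_n)\ge n$ for $n\ge 4$: by Proposition~\ref{pro:tf} a maximum triangle-free subgraph $H$ of $W_n$ supports a completely positive $B$ with $\cpr(B)=|E(H)|=\tf(W_n)$, and Proposition~\ref{pro:subgraph cpr} gives $\cpr(W_n)\ge\cpr(H)\ge\tf(W_n)$ (alternatively invoke the stated inequality $\cpr(G)\ge\tf(G)$ for connected $G$). The substance is the reverse inequality, which I would prove by induction on $n$, the cases $n=4,5$ following from $p_4=4$ and $p_5=6$. Let $A\in\cp_n$ with $G(A)=W_n$ and $n\ge 6$. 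Deleting a rim vertex $v$ turns $W_n$ into a fan $F$ on the hub and a path with $n-2$ vertices; $F$ is connected, outerplanar, and $\tf(F)\ge n-2$, so by Proposition~\ref{pro:outerplanar} $\cpr(F)=\tf(F)$, while a short count gives $\tf(F)=\tf(W_n)-2$. Hence it suffices to find a rim vertex $v$ such that $A$ admits a cp-decomposition consisting of a completely positive matrix whose graph (on the vertices other than $v$) is contained in $F$ together with \emph{exactly two} rank-one terms; then $\cpr(A)\le 2+\tf(F)=\tf(W_n)$.

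To arrange this I would start from a minimal cp-decomposition of $A$ and, using Proposition~\ref{pro:pairmove} repeatedly (each step strictly decreasing the total size of the supports, so the process terminates), reduce it so that the supports are cliques of $W_n$ forming an antichain: no singleton supports, and no edge-support contained in a triangle-support. The surviving supports are then edges and spoke triangles. Fix a rim vertex $v$ with rim-neighbours $v^-,v^+$; the supports meeting $v$ lie among $\{v\},\{v,v^-\},\{v,v^+\},\{v,h\},\{v,v^-,h\},\{v,v^+,h\}$, and since all three edges at $v$ must be covered, the reduced configuration must be one of: (a) the triangles $\{v,v^-,h\}$ and $\{v,v^+,h\}$; (b) $\{v,v^-,h\}$ and $\{v,v^+\}$; (c) the mirror of (b); or (d) the three edge-supports $\{v,v^-\},\{v,v^+\},\{v,h\}$. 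In cases (a)--(c) only two terms meet $v$; deleting them yields a completely positive $A'$ in which row $v$ — including $A'_{vv}$ — is zero, so $G(A'(v))$ is a subgraph of $F$ and, by Proposition~\ref{pro:subgraph cpr} and $\cpr(F)=\tf(F)$, we get $\cpr(A)\le 2+\cpr(A'(v))\le 2+\tf(F)=\tf(W_n)$, as wanted.

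The main obstacle is the ``bad'' configuration (d), where three rank-one terms meet $v$ — their sum being a completely positive matrix supported on the star at $v$ — so that deleting them (or deleting two and applying Proposition~\ref{pro:d(v)<3}) costs $3$ rather than $2$ and overshoots the bound by exactly one. To dispose of (d) I would exploit the $3\times 3$ structure around $v$: the principal submatrix $A[\{v,v^+,h\}]$ is positive, so by Proposition~\ref{pro:NP} (through its stated consequence on positive $3\times 3$ completely positive matrices) the terms meeting $v$ can be re-expressed within this submatrix, without increasing their number, in a form that reinstates the triangle support $\{v,v^+,h\}$; a further application of Proposition~\ref{pro:pairmove} should then move the configuration into one of (a)--(c). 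If instead such a local fix fails at $v$, one looks for another rim vertex: a reduced minimal decomposition with every rim vertex bad would consist of the $n-1$ rim-edge terms and the $n-1$ spoke terms, putting $A$ in the orbit of a diagonally dominant matrix (all supports of size at most $2$), which by Propositions~\ref{pro:being dd} and~\ref{pro:cpr for dd} is incompatible with $\cpr(A)=2(n-1)$ for small $n$, and more care (or a rank argument on the submatrices $A[\{v_i,v_{i+1},h\}]$) should rule it out in general, so the induction applies at a non-bad vertex. Making this local re-decomposition precise — keeping all summands nonnegative and controlling the effect on the remaining supports — is the technical heart of the proof.
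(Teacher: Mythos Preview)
The paper does not prove this proposition; it is quoted from \cite{Shak14} (Theorem~5.9 there) as a preliminary result, so there is no in-paper argument to compare against. Your computation of $\tf(W_n)$ is correct, the lower bound $\cpr(W_n)\ge\tf(W_n)$ is fine, and reducing the upper bound to ``some rim vertex is met by at most two terms'' via the outerplanar fan $F=W_n-v$ with $\cpr(F)=\tf(F)=\tf(W_n)-2$ is a reasonable strategy.

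The gap is precisely case~(d), and neither of your two proposed fixes closes it. For the local fix you invoke Proposition~\ref{pro:NP} through the positivity of $A[\{v,v^+,h\}]$, but what must be re-factored is not $A$ on those three rows: it is the sum of the terms at $v$ whose supports lie inside $\{v,v^+,h\}$, namely those on $\{v,v^+\}$ and $\{v,h\}$; the term on $\{v,v^-\}$ contains $v^-\notin\{v,v^+,h\}$ and cannot be moved into that block. The two-term partial sum has a zero in position $(v^+,h)$, so Proposition~\ref{pro:NP} does not apply to it. Adjoining an extra term supported on $\{v^+,h\}$ makes the $3\times3$ block positive, but the nearly-positive rotation then returns three terms with full triangle support, all touching $v$, on top of the untouched $\{v,v^-\}$ term --- and a subsequent pair-move can simply reproduce the edge configuration. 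Nothing in this circle of ideas forces the count at $v$ below three. For the fallback, ``every rim vertex bad'' does place $A$ in the orbit of a diagonally dominant matrix, but Proposition~\ref{pro:cpr for dd} then yields only $\cpr(A)\le n^2/4$, which contradicts $\cpr(A)=2(n-1)$ only when $2(n-1)>n^2/4$, i.e.\ for $n\le 6$; for $n\ge 7$ there is no contradiction. (Moreover, $\cpr(A)=2(n-1)$ is asserted rather than proved: your antichain reduction preserves the number of terms but need not keep the supports distinct, so you do not actually know that the minimal decomposition uses each edge exactly once.) As you yourself note, making the local re-decomposition precise is the heart of the matter; as written, the upper bound $\cpr(W_n)\le\tf(W_n)$ is not established for $n\ge 7$.
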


\subsection{Copositive matrices and their zeros}\quad Let $\spn_n$ denote the set of $n\times n$ SPN matrices.
The set $\spn_n$ is  a closed convex cone with a nonempty interior in $\sym_n$, and $\spn_n\subseteq \cop_n$. In \cite{Diananda1962} it was shown that for $n\le 4$ this
inclusion
is an equality.
For $n\ge 5$ the inclusion is strict. The first example of an exceptional copositive matrix was given by A. Horn~\cite{Diananda1962}; it is
called the \emph{Horn matrix}:
\begin{equation}H=\left(\begin{array}{rrrrr}
1&-1&1&1&-1\\
-1&1&-1&1&1\\
1&-1&1&-1&1\\
1&1&-1&1&-1\\
-1&1&1&-1&1
\end{array}
\right).\label{eq:Horn}\end{equation}

If a matrix $B$ is in the orbit of $A\in \sym_n$, then $B$ is SPN if and only if  $A$ is, and it is copositive if and only if $A$ is.
Thus $B$ is an exceptional copositive matrix if and only if $A$ is. Also,
$B$ is an extremal copositive matrix if and only if $A$ is.
If the diagonal of a matrix $A$ is positive, then there is a matrix $B$ in the orbit of $A$ with diagonal $\1$ ($B=DAD$, where $D$ is the diagonal matrix with $\diag
D=\left(1/\sqrt{a_{11}},\dots, 1/\sqrt{a_{nn}}\right)$). We therefore often assume that $\diag(A)=\1$, as in the next several propositions:


\begin{proposition}\label{pro:cop diag 1}
Let  $A\in \cop_n$ be an extremal copositive matrix\cob{, with at least one positive diagonal entry}.
\begin{itemize}
\item[{\rm(a)}] If $a_{ii}=0$, then $a_{ij}= 0$ for every $i\ne j$, and $A(i)\in \cop_{n-1}$ is extremal.
\item[{\rm(b)}] If $\diag (A) =\1$, then $a_{ij}\in [-1,1]$ for every $i\ne j$.
\end{itemize}
\end{proposition}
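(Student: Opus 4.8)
The plan is to treat (a) and (b) separately, each time using the extremality of $A$ in the form: whenever $A=A_1+A_2$ with $A_1,A_2\in\cop_n$, both $A_k$ are nonnegative scalar multiples of $A$. For (a), suppose $a_{ii}=0$; evaluating copositivity at $\x=t\e_i+\e_j$ and letting $t\to\infty$ gives $a_{ij}\ge0$ for every $j$, so the $i$-th column $\a_i:=A\e_i$ is nonnegative. I would then write $A=N+(A-N)$ with $N:=\e_i\a_i^{T}+\a_i\e_i^{T}$, the symmetric nonnegative (hence copositive) matrix that reproduces row and column $i$ of $A$ and is zero elsewhere; the point is that $A-N$ is still copositive, since for $\x\ge0$, splitting $\x=x_i\e_i+\y$ with $y_i=0$ and using $a_{ii}=0$, the terms linear in $x_i$ cancel and $\x^{T}(A-N)\x=\y^{T}A\y\ge0$. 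Extremality then forces $N=\lambda A$ for some $\lambda\ge0$. If $\lambda=0$ then $\a_i=0$, i.e. $a_{ij}=0$ for all $j\ne i$, and $A$ is (after a permutation sending $i$ to the last index) the direct sum of $A(i)$ with a $1\times1$ zero block; extremality of $A(i)$ in $\cop_{n-1}$ then follows immediately by lifting any decomposition $A(i)=B_1+B_2$ to $A=(B_1\oplus(0))+(B_2\oplus(0))$. If instead $\lambda>0$, then $A$ is a nonnegative matrix supported on row and column $i$, hence $A=\sum_{j\ne i}b_jE_{ij}$ with $b_j\ge0$, and extremality forces all but one $b_j$ to vanish, so $A$ is a positive multiple of a single $E_{ij}$ — the lone degenerate case, which does not occur in the intended application, where $A$ is exceptional while every $E_{ij}$ is SPN.

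For (b), assume $\diag A=\1$. The bound $a_{ij}\ge-1$ is immediate from $\x=\e_i+\e_j$. For $a_{ij}\le1$ the key point is the inequality
\[\x^{T}A\x\ \ge\ 2(a_{ij}-1)\,x_ix_j\qquad\text{for all }\x\in\R^n_+,\]
equivalently $A-(a_{ij}-1)E_{ij}\in\cop_n$. I would prove it by applying copositivity to the two nonnegative ``mass-transfer'' vectors $\y_1:=\x+x_i(\e_j-\e_i)$ and $\y_2:=\x+x_j(\e_i-\e_j)$ — each $\ge0$ because the coordinate from which mass is removed becomes $0$ — which, using $a_{ii}=a_{jj}=1$, give
\[\x^{T}A\x\ \ge\ 2x_i\big[(A\x)_i-(A\x)_j\big]+2x_i^{2}(a_{ij}-1),\qquad \x^{T}A\x\ \ge\ 2x_j\big[(A\x)_j-(A\x)_i\big]+2x_j^{2}(a_{ij}-1).\]
Forming the convex combination of these lower bounds with weights $x_j/(x_i+x_j)$ and $x_i/(x_i+x_j)$ cancels the $(A\x)$-terms and leaves exactly $\x^{T}A\x\ge2(a_{ij}-1)x_ix_j$, the case $x_i+x_j=0$ being trivial. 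Granting this: if $a_{ij}>1$ then $A=\big(A-(a_{ij}-1)E_{ij}\big)+(a_{ij}-1)E_{ij}$ is a sum of two copositive matrices, so extremality yields $(a_{ij}-1)E_{ij}=\lambda A$ with $\lambda>0$, whence $A=a_{ij}E_{ij}$ and $a_{ii}=0$, contradicting $\diag A=\1$. Therefore $a_{ij}\le1$.

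I expect the main obstacle to be (b): one must identify the right pair of test vectors — transferring, respectively, all of $x_i$ onto coordinate $j$ and all of $x_j$ onto coordinate $i$ — and then observe that precisely the above convex combination annihilates the first-order $(A\x)$-terms, turning two one-sided estimates into the two-sided statement $A-(a_{ij}-1)E_{ij}\in\cop_n$. In (a) the only subtle point is recognising and setting aside the degenerate case $A\propto E_{ij}$ (this is exactly where exceptionality of the extremal matrix enters in the intended application); granting that, the decomposition $A=N+(A-N)$ completes the argument.
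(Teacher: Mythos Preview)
The paper does not prove this proposition; it is stated in the preliminaries as a known fact, without proof, so there is no argument in the paper to compare yours against.

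Your argument is correct. For (b), the two ``mass-transfer'' test vectors $\y_1,\y_2$ together with the convex combination using weights $x_j/(x_i+x_j)$ and $x_i/(x_i+x_j)$ do exactly what you claim: the first-order terms cancel and one obtains $\x^{T}A\x\ge 2(a_{ij}-1)x_ix_j$, i.e.\ $A-(a_{ij}-1)E_{ij}\in\cop_n$; extremality together with $\diag A=\1$ then forces $a_{ij}\le 1$. For (a), the decomposition $A=N+(A-N)$ with $N=\e_i\a_i^{T}+\a_i\e_i^{T}$ is valid, and the computation $\x^{T}(A-N)\x=\y^{T}A\y\ge 0$ is correct. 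You also correctly flag that the proposition as literally stated admits the degenerate case $A=cE_{ij}$ with $c>0$: this matrix is extremal in $\cop_n$, has $a_{ii}=0$, yet $a_{ij}\ne 0$. Setting this SPN case aside is exactly the right move, since every place the paper invokes the proposition the extremal matrix in question is assumed exceptional.
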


For $A\in \sym_n$ let $G_{-1}(A)$ be the graph whose
vertex set is $\{1,\dots, n\}$ and  $ij$ is an edge of the graph if and only if $a_{ij}=-1$.
The next two propositions are Lemma  3.4 and Lemma 3.5 in \cite{ShakBerDurRaj14}. They characterize positive semidefinite matrices and SPN matrices with
diagonal $\1$ and a connected  $G_{-1}(A)$.

\begin{proposition}\label{pro:psdG-1connected}
Let $A\in \psd_n $ have $\diag A=\1$. If  $G_{-1}(A)$ is connected, then
 $\rank A=1$. In particular, $A$ is a $\pm 1$~matrix and $G_{-1}(A)$ is a complete bipartite graph.
\end{proposition}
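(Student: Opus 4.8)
The plan is to use the Gram (factorization) representation of a positive semidefinite matrix. Since $A\in\psd_n$ with $\diag A=\1$, write $A=B^TB$ with $B=(\b_1\,|\,\cdots\,|\,\b_n)$, $\b_i\in\R^r$, $r=\rank A$; the condition $a_{ii}=1$ says that each $\b_i$ is a unit vector, and $a_{ij}=\b_i^T\b_j$ for all $i,j$. By the Cauchy--Schwarz inequality $a_{ij}\in[-1,1]$, and the equality case gives: $a_{ij}=-1$ if and only if $\b_j=-\b_i$.

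Next I would exploit the connectedness of $G_{-1}(A)$. Fix the vertex $1$. For any vertex $k$ choose a path $1=i_0,i_1,\dots,i_\ell=k$ in $G_{-1}(A)$; consecutive pairs are edges, so $a_{i_t i_{t+1}}=-1$ and hence $\b_{i_{t+1}}=-\b_{i_t}$, whence $\b_k=\pm\b_1$ by induction on $\ell$. As $G_{-1}(A)$ is connected and spans all $n$ vertices, every column of $B$ equals $\pm\b_1$. Since $\b_1\ne\0$ (it is a unit vector), this gives $\rank B=1$, and therefore $\rank A=\rank(B^TB)=1$.

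Finally, a rank-one matrix in $\psd_n$ with $\diag A=\1$ has the form $A=\v\v^T$ with $v_i^2=1$ for every $i$, i.e.\ $\v$ is a $\pm1$ vector; in particular $A$ is a $\pm1$ matrix. Then $a_{ij}=v_iv_j=-1$ exactly when $v_i$ and $v_j$ have opposite signs, so setting $\alpha=\{i:v_i=1\}$ and $\beta=\{i:v_i=-1\}$, the edge set of $G_{-1}(A)$ is precisely $\{ij:i\in\alpha,\ j\in\beta\}$, which is the complete bipartite graph with parts $\alpha$ and $\beta$ (both nonempty, since $G_{-1}(A)$ has at least one edge).

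I do not expect a genuine obstacle here: the argument is elementary once the Gram representation is in hand. The only points needing a little care are the equality case of Cauchy--Schwarz (to conclude $\b_j=-\b_i$, not merely $\b_j=c\,\b_i$ for some scalar $c$), and making sure the sign-propagation reaches every vertex --- which is exactly what connectedness of $G_{-1}(A)$ provides.
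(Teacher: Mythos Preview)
Your argument is correct. The Gram factorization plus the equality case of Cauchy--Schwarz is exactly the right tool: from $a_{ij}=-1$ and $\|\b_i\|=\|\b_j\|=1$ you get $\b_j=-\b_i$, and connectedness of $G_{-1}(A)$ then forces every column of $B$ to be $\pm\b_1$, giving $\rank A=1$. The deduction of the $\pm1$ structure and the complete bipartite description of $G_{-1}(A)$ from $A=\v\v^T$ with $\v\in\{\pm1\}^n$ is also clean.

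Note, however, that the paper does not supply its own proof of this proposition: it is quoted as Lemma~3.4 of \cite{ShakBerDurRaj14} and stated without argument in the preliminaries. So there is no paper proof to compare against here. Your Gram-vector approach is a standard and natural way to establish the result, and nothing is missing.
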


\begin{proposition}\label{pro:spnG-1connected}
Let $A\in \sym_n$ have $\diag A={\1}$ and $a_{ij}\ge -1$ for every $i, j$, and let $G_{-1}(A)$ be  connected. Then $A\in \spn_n$ if and only if the following two conditions are
satisfied: $G_{-1}(A)$ is bipartite
and $a_{ij}\ge 1$ whenever $d_{G_{-1}(A)}(i,j)$ is even.
\end{proposition}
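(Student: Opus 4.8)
The plan is to prove both implications by exhibiting the positive semidefinite part of the SPN decomposition explicitly as a rank one $\pm 1$ matrix. Throughout write $G=G_{-1}(A)$, and note that since $G$ is connected on the full vertex set $\{1,\dots,n\}$ it has no isolated vertex (assuming $n\ge 2$; the case $n=1$ is trivial, as then $A=(1)$ is positive semidefinite and both conditions hold vacuously). Because $G$ is connected and, as will emerge, bipartite, its bipartition $(X,Y)$ is unique up to interchange, and I will encode it by the sign vector $\v\in\{1,-1\}^n$ with $v_i=1$ on $X$ and $v_i=-1$ on $Y$.

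For the necessity direction, suppose $A=S+N$ with $S\in\psd_n$ and $N\ge 0$. First I would pin down $S$ along the edges of $G$. Fix an edge $ij$ of $G$, so that $a_{ij}=-1$. Since $N\ge 0$ and $\diag A=\1$, we have $S_{ii}=1-N_{ii}\le 1$, $S_{jj}\le 1$, and $S_{ij}=-1-N_{ij}\le -1$; on the other hand positive semidefiniteness of the $2\times 2$ principal submatrix of $S$ on $\{i,j\}$ forces $S_{ii}S_{jj}\ge S_{ij}^2\ge 1$. These bounds can hold simultaneously only with $S_{ii}=S_{jj}=1$ and $S_{ij}=-1$ (hence also $N_{ii}=N_{jj}=N_{ij}=0$). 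Since $G$ is connected it covers every vertex, so this yields $\diag S=\1$ and $G\subseteq G_{-1}(S)$; in particular $G_{-1}(S)$ is connected. Proposition \ref{pro:psdG-1connected} then gives $S=\v\v^T$ for some $\pm 1$ vector $\v$, with $G_{-1}(S)$ complete bipartite. Consequently $G$, being a subgraph of $G_{-1}(S)$, is bipartite, which is the first condition. For the second, observe that an edge $k\ell$ of $G$ has $v_k v_\ell=S_{k\ell}=-1$, so $\v$ alternates sign along any path of $G$; thus $d_G(i,j)$ even implies $v_i=v_j$, whence $S_{ij}=1$ and $a_{ij}=1+N_{ij}\ge 1$.

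For the sufficiency direction, I would verify that the same rank one matrix works. Assume $G$ is bipartite with sign vector $\v$ as above, and that $a_{ij}\ge 1$ whenever $d_G(i,j)$ is even; set $S=\v\v^T\in\psd_n$ and $N=A-S$. It remains to check $N\ge 0$ entrywise, which I would do by casework on the $G$-distance of $i$ and $j$ (finite, since $G$ is connected). On the diagonal, $N_{ii}=1-1=0$. If $ij$ is an edge, then $v_iv_j=-1=a_{ij}$, so $N_{ij}=0$. If $d_G(i,j)$ is odd and greater than $1$, then $i,j$ lie in different parts, $v_iv_j=-1$, and the hypothesis $a_{ij}\ge -1$ gives $N_{ij}=a_{ij}+1\ge 0$. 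If $d_G(i,j)$ is even and positive, then $i,j$ lie in the same part, $v_iv_j=1$, and the second condition gives $N_{ij}=a_{ij}-1\ge 0$. Hence $N\ge 0$, and $A=\v\v^T+N$ exhibits $A$ as an SPN matrix.

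The routine part is the sufficiency casework; the crux is the necessity direction, and within it the step that forces $S$ to equal $-1$ (and the relevant diagonal entries to equal $1$) precisely on the edges of $G$. That rigidity comes from combining the entrywise bound $S_{ij}\le -1$ furnished by $N\ge 0$ with the opposite bound $S_{ij}\ge -1$ coming from positive semidefiniteness of the $2\times 2$ minors; once $\diag S=\1$ is established and Proposition \ref{pro:psdG-1connected} is invoked to identify $S$ as a rank one sign matrix, both conditions read off immediately.
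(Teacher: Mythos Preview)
Your proof is correct. Note, however, that the paper does not actually prove this proposition: it is quoted as Lemma~3.5 of \cite{ShakBerDurRaj14} and stated without proof in the preliminaries section, so there is no in-paper argument to compare against.

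That said, your argument is essentially the natural one and almost certainly coincides with the original. The key rigidity step---forcing $S_{ii}=S_{jj}=1$ and $S_{ij}=-1$ on every edge of $G_{-1}(A)$ via the $2\times 2$ PSD constraint combined with $N\ge 0$---is exactly what is needed to invoke Proposition~\ref{pro:psdG-1connected} and pin $S$ down as $\v\v^T$. One very minor remark: in the necessity direction you might make explicit that $S_{ii},S_{jj}\ge 0$ (from $S\in\psd_n$), so that $S_{ii}S_{jj}\le 1$ follows from $S_{ii},S_{jj}\le 1$; you use this implicitly when squeezing $S_{ii}S_{jj}$ between $1$ and $1$. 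Otherwise the argument is complete and cleanly written.
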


A \emph{zero} of a matrix $A\in \cop_n$ is a nonzero vector $\u\in \R^n_+$ such that $\u^TA\u=0$. We will use the following additional terms defined in \cite{Hildebrand14}:  The
zero $\u$ is \emph{minimal} if
no other zero of $A$ has support which is strictly contained in $\supp\u$. A set $\sigma\subseteq \{1, \dots, n\}$ is called
a \emph{zero support} of $A$ if it is the support of a zero of $A$; it is a \emph{minimal support} of $A$ if it is the support
of a minimal zero of $A$. The set of all zeros of $A$ is denoted by $\mathcal{V}^A$, i.e.,
\[\mathcal{V}^A=\{\u\in \R^n_+\setminus\{\0\}|  \u^TA\u=0\}.\]
Zeros and minimal zeros are useful in studying extremal copositive matrices.  In the next four propositions we recall some basic facts. These are Lemma  2.4, Lemma 2.6, Lemma 4.12 and Corollary 4.10 in \cite{DickinsonDuerGijbenHildebrand2013}.

\begin{proposition}\label{pro:A[zs]} Let $A\in \cop_n$, $\u\in \mathcal{V}^A$  and $\sigma=\supp\u$. Then the principal
submatrix $A[\sigma]$ is positive semidefinite, and $\u[\sigma]$ is in the nullspace of $A[\sigma]$.
\end{proposition}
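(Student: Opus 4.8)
The plan is to reduce to the case where $\u$ is strictly positive on its support, and then combine copositivity of the principal submatrix $A[\sigma]$ with a two-sided perturbation of $\u$ inside the nonnegative orthant.

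First I would observe that a permutation similarity changes neither the copositivity of $A$ nor the truth of the conclusion, so we may assume $\sigma=\{1,\dots,k\}$ and write $\u=(\u[\sigma]^T,\0^T)^T$ with $\u[\sigma]>\0$ entrywise. Since $A$ is copositive, every principal submatrix of $A$ is copositive (extend a nonnegative vector supported on $\sigma$ by zeros and apply copositivity of $A$); in particular $A[\sigma]\in\cop_k$, and $0=\u^TA\u=\u[\sigma]^TA[\sigma]\,\u[\sigma]$.

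Next, fix an arbitrary $\x\in\R^k$. Because $\u[\sigma]>\0$, there is $\varepsilon>0$ such that $\u[\sigma]+t\x\ge\0$ for all $|t|\le\varepsilon$. Copositivity of $A[\sigma]$ then gives, for all such $t$,
\[
0\le(\u[\sigma]+t\x)^TA[\sigma](\u[\sigma]+t\x)=2t\,\x^TA[\sigma]\,\u[\sigma]+t^2\,\x^TA[\sigma]\,\x,
\]
where we used $\u[\sigma]^TA[\sigma]\,\u[\sigma]=0$. Dividing by $t$ and letting $t\to0^+$, resp.\ $t\to0^-$, yields $\x^TA[\sigma]\,\u[\sigma]\ge0$, resp.\ $\le0$, so $\x^TA[\sigma]\,\u[\sigma]=0$. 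As $\x$ was arbitrary, $A[\sigma]\,\u[\sigma]=\0$, i.e.\ $\u[\sigma]$ lies in the nullspace of $A[\sigma]$. Substituting this back into the displayed inequality leaves $t^2\,\x^TA[\sigma]\,\x\ge0$, hence $\x^TA[\sigma]\,\x\ge0$ for every $\x\in\R^k$, which is exactly $A[\sigma]\succeq0$.

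I do not expect a genuine obstacle here; the one point requiring mild care is that the perturbation parameter $t$ must be allowed to take both signs, since it is the cancellation of the two one-sided limits that forces the linear term $\x^TA[\sigma]\,\u[\sigma]$ to vanish rather than merely be nonnegative — and the strict positivity of $\u$ on $\sigma$ is precisely what makes such two-sided perturbations stay in $\R^k_+$.
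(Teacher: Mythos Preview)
Your proof is correct and is the standard argument for this well-known fact. The paper does not actually supply its own proof of this proposition: it is quoted there as a known result (attributed to Lemma~2.4 in \cite{DickinsonDuerGijbenHildebrand2013}), so there is nothing to compare against beyond noting that your two-sided perturbation argument is precisely the classical one.
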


A matrix $A\in \cop_n$ is  called \emph{$E_{ij}$-irreducible} if if $A-\delta E_{ij}\notin \cop_n$  for every $\delta>0$. $A$ is \emph{$\nn$-irreducible} if $A$ is
$E_{ij}$-irreducible
for every  $1\le i, j\le n$, and it
is \emph{$\tilde\nn$-irreducible} if $A$ is $E_{ij}$-irreducible for every  $1\le i\ne j\le n$.
Clearly, any  exceptional extremal copositive matrix  is $\nn$-irreducible (and $\tilde\nn$-irreducible),
but not vice versa.

\begin{proposition}\label{pro:Nirr}
A matrix $A\in \cop_n$ is $E_{ij}$-irreducible if and only if there exists a zero $\u\in \mathcal{V}^A$
such that $(A\u)_i=(A\u)_j=0$ and $u_i+u_j>0$.
\end{proposition}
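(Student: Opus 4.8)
The statement is an equivalence, so I would establish the two implications separately.

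\medskip\noindent\emph{``$\Leftarrow$''.} Suppose $\u\in\mathcal{V}^A$ satisfies $(A\u)_i=(A\u)_j=0$ and $u_i+u_j>0$. If $u_i>0$ and $u_j>0$ (for $i=j$ this just means $u_i>0$), then $\u^T(A-\delta E_{ij})\u=\u^TA\u-\delta\,\u^TE_{ij}\u<0$ for every $\delta>0$, so $A-\delta E_{ij}\notin\cop_n$. If $i\ne j$ and, say, $u_i>0=u_j$, I would test with the nonnegative vector $\u+t\e_j$; using $\u^TA\u=0$ and $(A\u)_j=0$,
\[(\u+t\e_j)^T(A-\delta E_{ij})(\u+t\e_j)=t^2a_{jj}-2\delta t\,u_i,\]
which is negative for all sufficiently small $t>0$ (recall $a_{jj}\ge0$ by copositivity). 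In every case $A-\delta E_{ij}\notin\cop_n$ for all $\delta>0$, i.e.\ $A$ is $E_{ij}$-irreducible.

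\medskip\noindent\emph{``$\Rightarrow$''.} Assume $A$ is $E_{ij}$-irreducible. For each $\delta>0$ the quadratic form $g_\delta(\x):=\x^T(A-\delta E_{ij})\x$ attains a negative minimum on the compact set $K:=\{\x\in\R^n_+:\|\x\|=1\}$, say at $\x^{(\delta)}$, with value $\lambda^{(\delta)}:=g_\delta(\x^{(\delta)})<0$. Since $\x^TA\x\ge0$ on $K$ by copositivity, $(\x^{(\delta)})^TE_{ij}\x^{(\delta)}=\tfrac1\delta\big((\x^{(\delta)})^TA\x^{(\delta)}-\lambda^{(\delta)}\big)>0$, so $x_i^{(\delta)},x_j^{(\delta)}>0$ (for $i=j$, just $x_i^{(\delta)}>0$); in particular $i,j\in\supp\x^{(\delta)}$. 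The first-order (KKT) conditions on $K$ give $A\x^{(\delta)}=\delta E_{ij}\x^{(\delta)}+\lambda^{(\delta)}\x^{(\delta)}+\tfrac12\theta^{(\delta)}$ with $\theta^{(\delta)}\ge\0$ supported off $\supp\x^{(\delta)}$; reading off coordinates $i$ and $j$, where $\theta^{(\delta)}$ vanishes, $(A\x^{(\delta)})_i=\delta x_j^{(\delta)}+\lambda^{(\delta)}x_i^{(\delta)}$ and $(A\x^{(\delta)})_j=\delta x_i^{(\delta)}+\lambda^{(\delta)}x_j^{(\delta)}$ (and the analogous identity when $i=j$). Passing to a subsequence with $\x^{(\delta)}\to\u\in K$: then $\lambda^{(\delta)}=(\x^{(\delta)})^TA\x^{(\delta)}-\delta(\x^{(\delta)})^TE_{ij}\x^{(\delta)}\to\u^TA\u$, which being a limit of negatives is $\le0$ and, by copositivity, $\ge0$; hence $\u^TA\u=0$, so $\u$ is a zero of $A$ and $\lambda^{(\delta)}\to0$. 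Substituting into the coordinate identities yields $(A\u)_i=(A\u)_j=0$.

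\medskip It remains to produce a zero satisfying $u_i+u_j>0$. If, along the chosen subsequence, $x_i^{(\delta)}+x_j^{(\delta)}$ does not tend to $0$, then after a further subsequence $u_i+u_j>0$ and we are done with this $\u$ (indeed with $i$ or $j$ in $\supp\u$). Otherwise $u_i=u_j=0$; since $(A\u)_i=(A\u)_j=0$, the vector $\u+t(\e_i+\e_j)\ge\0$ satisfies $\big(\u+t(\e_i+\e_j)\big)^TA\big(\u+t(\e_i+\e_j)\big)=t^2(a_{ii}+2a_{ij}+a_{jj})$, with $a_{ii}+2a_{ij}+a_{jj}\ge0$ by copositivity; if this coefficient is $0$ the perturbed vector is a zero with $i,j$ in its support, hence another valid choice, while if it is positive one falls back on $\u+t\e_i$ or $\u+t\e_j$ together with the sign constraints copositivity places on $a_{ii},a_{jj},a_{ij}$ (via test vectors $\e_i+s\e_j$ and $\u+s\e_i$). \textbf{I expect this last case to be the main obstacle}: it can genuinely happen that no zero of $A$ has both $i$ and $j$ in its support while $A$ is still $E_{ij}$-irreducible (e.g.\ $A=\bigl(\begin{smallmatrix}1&0&-1\\0&1&0\\-1&0&1\end{smallmatrix}\bigr)$ with $\{i,j\}=\{1,2\}$: the zeros are the multiples of $(1,0,1)^T$), so one is forced to exhibit a zero whose gradient components at $i$ and $j$ vanish ``accidentally,'' and this requires careful control of how the minimizing configurations $\x^{(\delta)}$ approach the zero set. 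An alternative handling of these existence questions is via convex duality: $\cp_n$ has the compact base $\{C\in\cp_n:\langle J_n,C\rangle=1\}$ on which $C\mapsto\langle A-\delta E_{ij},C\rangle$ attains its minimum $v(\delta)$, and $A$ is $E_{ij}$-irreducible iff $v(\delta)<0$ for all $\delta>0$; concavity of $v$ forces $v(0)=0$, giving $C_0\in\cp_n$ with $\langle A,C_0\rangle=0$, and from a cp-decomposition of a suitable minimizer (bounding the number of rank-one terms by Carathéodory) one reads off the desired zero — but the borderline case $v'(0^+)=0$ still funnels back to the same delicate limiting analysis.
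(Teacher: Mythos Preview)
The paper does not prove this proposition; it is quoted as Lemma~2.6 of \cite{DickinsonDuerGijbenHildebrand2013}, so there is no ``paper's own proof'' to compare against. I therefore assess your argument on its own merits.

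Your ``$\Leftarrow$'' direction is correct and cleanly done. In the ``$\Rightarrow$'' direction, the KKT/limiting argument is sound up to the point where you obtain a zero $\u$ with $(A\u)_i=(A\u)_j=0$: the compactness, the identification of the multiplier $\lambda^{(\delta)}$ with the minimum value, and the passage to the limit are all fine.

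The genuine gap is exactly where you flag it: securing $u_i+u_j>0$. Your fallback perturbations do not close it. If the limit $\u$ has $u_i=u_j=0$, then $\u+s\e_i+t\e_j$ is a zero of $A$ precisely when $(s,t)$ is a nonnegative zero of the $2\times2$ copositive block $A[\{i,j\}]$; but if $a_{ii}>0$, $a_{jj}>0$ and either $a_{ij}\ge0$ or $a_{ij}^2<a_{ii}a_{jj}$, this block is strictly copositive and admits no such $(s,t)$. None of the test vectors you mention (``$\e_i+s\e_j$'', ``$\u+s\e_i$'') rules this configuration out, so the perturbation route, as written, stalls. Your own $3\times3$ example in fact does \emph{not} illustrate the obstruction (the unique normalized zero there already has $u_1>0$, and one checks the minimizers $\x^{(\delta)}$ converge to it), so the worry you raise is about a different, unexhibited scenario. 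The duality sketch at the end is suggestive but, as you note, runs into the same borderline case. In short: the backward implication is complete, the forward implication is correct up to producing $\u$ with vanishing $i$th and $j$th gradient components, but the final positivity condition $u_i+u_j>0$ is left unproved.
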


\begin{proposition}\label{pro:n-1psd of irr}
Let $A\in \cop_n$ be $\tilde\nn$-irreducible. If for some $\sigma\subseteq \{1, \dots, n\}$ with $|\sigma|=n-1$ the submatrix $A[\sigma]$ is positive
semidefinite, then $A$ is positive semidefinite.
\end{proposition}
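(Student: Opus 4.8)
I would argue by contradiction. Relabel so that $\sigma=\{1,\dots,n-1\}$ and partition
\[A=\begin{pmatrix}A_\sigma & \b\\ \b^T & c\end{pmatrix},\qquad A_\sigma:=A[\sigma]\succeq0,\quad c:=a_{nn},\]
where $c\ge0$ since $A$ is copositive (test on $\e_n$). Assume, for contradiction, that $A\notin\psd_n$. The target is to exhibit some $\ell\in\sigma$ and a scalar $\delta>0$ with $A-\delta E_{\ell n}\in\cop_n$, which contradicts $\tilde\nn$-irreducibility. It is convenient to encode everything through the convex quadratic $f(\z):=\z^TA_\sigma\z+2\b^T\z+c=(\z,1)^TA(\z,1)$ on $\R^{n-1}$ (here $(\z,1)$ denotes $\z$ with a $1$ appended): copositivity of $A$ is equivalent to $f\ge0$ on $\R^{n-1}_+$, while, since $A_\sigma\succeq0$, the condition $A\in\psd_n$ is equivalent to $f\ge0$ on all of $\R^{n-1}$. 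So in each case I will either force $f\ge0$ everywhere (contradicting $A\notin\psd_n$) or build the reducing pair. Since $A$ is $E_{kn}$-irreducible for every $k\in\sigma$, Proposition~\ref{pro:Nirr} provides zeros $\u^{(k)}\in\mathcal V^A$ with $(A\u^{(k)})_k=(A\u^{(k)})_n=0$ and $u^{(k)}_k+u^{(k)}_n>0$; I split according to whether some $\u^{(k)}$ has positive $n$-th entry.

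\emph{Case 1: $u^{(k)}_n>0$ for some $k$.} Scaling so that $u^{(k)}_n=1$ and writing $\u^{(k)}=(\w,1)$ with $\w\in\R^{n-1}_+$, we have $f(\w)=0$, so $\w$ minimizes $f$ over $\R^{n-1}_+$; the first-order optimality conditions then read $\mathbf p:=A_\sigma\w+\b\ge\0$ and $\mathbf p^T\w=0$. Expanding $f$ about $\w$ gives the identity $f(\z)=2\mathbf p^T\z+(\z-\w)^TA_\sigma(\z-\w)$. If $\mathbf p=\0$, this says $f(\z)=(\z-\w)^TA_\sigma(\z-\w)\ge0$ for \emph{every} $\z\in\R^{n-1}$, hence $A\in\psd_n$ — contradiction. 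If $\mathbf p\ne\0$, pick $\ell\in\sigma$ with $\mathbf p_\ell>0$; then for $\z\ge\0$ we have $f(\z)\ge2\mathbf p^T\z\ge2\mathbf p_\ell z_\ell$, which gives $A-\mathbf p_\ell E_{\ell n}\in\cop_n$ (by homogeneity, the face $x_n=0$ being trivial), contradicting $E_{\ell n}$-irreducibility.

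\emph{Case 2: $u^{(k)}_n=0$ for all $k$.} Then $u^{(k)}_k>0$ and $\supp\u^{(k)}\subseteq\sigma$; set $\v^{(k)}:=\u^{(k)}[\sigma]\in\R^{n-1}_+$. Since $A_\sigma\succeq0$ and $(\v^{(k)})^TA_\sigma\v^{(k)}=(\u^{(k)})^TA\u^{(k)}=0$ (cf.\ Proposition~\ref{pro:A[zs]}) we get $A_\sigma\v^{(k)}=\0$, and $\b^T\v^{(k)}=(A\u^{(k)})_n=0$. Hence $\v:=\sum_k\v^{(k)}$ is \emph{strictly positive}, with $A_\sigma\v=\0$ and $\b^T\v=0$. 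Now for any $\d$ with $A_\sigma\d=\0$ and any $s\in\R$, choosing $t$ large makes $t\v+s\d\ge\0$, and $0\le f(t\v+s\d)=2s\,\b^T\d+c$; letting $s\to\pm\infty$ forces $\b^T\d=0$. Thus $\b$ is orthogonal to the null space of $A_\sigma$, so $\b=-A_\sigma\y_0$ for some $\y_0\in\R^{n-1}$, and then $f(\z)=(\z-\y_0)^TA_\sigma(\z-\y_0)+f(\y_0)$, whence $\min_{\R^{n-1}}f=f(\y_0)$. Finally $\y_0+s\v\ge\0$ for $s$ large while $f(\y_0+s\v)=f(\y_0)$, so copositivity gives $f(\y_0)=(\y_0+s\v,1)^TA(\y_0+s\v,1)\ge0$; hence $f\ge0$ on all of $\R^{n-1}$ and $A\in\psd_n$ — contradiction.

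In every case we contradict the hypotheses, which proves the proposition. The step I expect to be the main obstacle is Case~2: one must first recognize that the irreducibility certificates, when none of them involves the last index, assemble into a \emph{strictly} positive null vector of $A_\sigma$, and then use that vector twice — to pin $\b$ down inside the range of $A_\sigma$, and to translate the unconstrained minimizer of $f$ into the nonnegative orthant so that copositivity of $A$ can be invoked. The remaining ingredients are block-matrix bookkeeping together with the elementary facts that a copositive matrix has nonnegative diagonal and that $\x^TB\x=0$ forces $B\x=\0$ when $B\succeq0$.
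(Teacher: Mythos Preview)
The paper does not supply its own proof of this proposition; it is quoted as Lemma~4.12 of \cite{DickinsonDuerGijbenHildebrand2013} and used as a black box. So there is nothing in the present paper to compare against.

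Your argument is correct. A couple of small remarks: in Case~1 you do not actually use which $k$ produced the zero, only that some zero of $A$ has a positive last coordinate; the KKT step can also be read off directly from $f\ge 0$ on $\R^{n-1}_+$ and $f(\w)=0$ without invoking optimality theory (if $p_i<0$ then $f(\w+\varepsilon\e_i)<0$; if $w_i>0$ and $p_i>0$ then $f(\w-\varepsilon\e_i)<0$). In Case~2 the key identity $f(t\v+s\d)=2s\,\b^T\d+c$ holds because $A_\sigma\v=A_\sigma\d=\0$ and $\b^T\v=0$, so for each fixed $s$ one can take $t$ large enough to land in the orthant and conclude $2s\,\b^T\d+c\ge 0$; letting $|s|\to\infty$ then forces $\b^T\d=0$. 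Once $\b\in\mathrm{range}(A_\sigma)$, the translation of the unconstrained minimizer into the orthant via the strictly positive null vector $\v$ is exactly the right idea and finishes the job. Note that you only used $E_{kn}$-irreducibility for $k\ne n$, i.e.\ a bit less than full $\tilde\nn$-irreducibility, which is consistent with the statement.
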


Propositions \ref{pro:A[zs]} and \ref{pro:n-1psd of irr} imply the next proposition.

\begin{proposition}\label{pro:maxsizesupp}
Let $A\in \cop_n$,  be $\tilde\nn$-irreducible. If some $\u\in \mathcal{V}^A$ has $| \supp(\u)| \ge  n - 1$, then
$A$ is positive semidefinite.
\end{proposition}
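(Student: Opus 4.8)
The plan is to derive this directly as a corollary of the two preceding propositions, exactly as the sentence introducing the statement suggests. Write $\sigma=\supp(\u)$, so that by hypothesis $|\sigma|\ge n-1$, and observe that the whole argument will split into the two possible values $|\sigma|=n$ and $|\sigma|=n-1$.

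First I would invoke Proposition \ref{pro:A[zs]}: since $\u\in \mathcal{V}^A$ with support $\sigma$, the principal submatrix $A[\sigma]$ is positive semidefinite. This is the one structural input about the zero $\u$ that is needed, and it holds for an arbitrary copositive $A$ without using irreducibility.

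Next I would dispose of the two cases. If $|\sigma|=n$, then $\sigma=\{1,\dots,n\}$, so $A[\sigma]=A$ itself, and the conclusion is immediate from the previous step. If instead $|\sigma|=n-1$, then $A[\sigma]$ is a positive semidefinite principal submatrix of order $n-1$; here I would feed this $\sigma$ into Proposition \ref{pro:n-1psd of irr}, whose hypotheses are precisely that $A\in\cop_n$ is $\tilde\nn$-irreducible and that $A[\sigma]$ is positive semidefinite for some $\sigma$ with $|\sigma|=n-1$. Its conclusion is exactly that $A$ is positive semidefinite, completing this case.

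There is no genuine obstacle to overcome: the content lies entirely in the two cited results, and the only thing to be careful about is the case distinction on $|\sigma|$, together with noting that the irreducibility hypothesis is used only in the borderline case $|\sigma|=n-1$ (in the case $|\sigma|=n$ it is not needed at all). I would simply record both cases and conclude in one line each.
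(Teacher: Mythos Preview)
Your proposal is correct and matches the paper's approach exactly: the paper does not write out a proof but simply remarks that Propositions~\ref{pro:A[zs]} and~\ref{pro:n-1psd of irr} imply the result, which is precisely the argument you spell out. The case split on $|\sigma|=n$ versus $|\sigma|=n-1$ is the natural way to make this implication explicit.
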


Let $A\in \cop_n$ be an exceptional $\nn$-irreducible matrix with $\diag(A)=\1$. It is easy to see (e.g., by Proposition \ref{pro:A[zs]}) that
 a zero of $A$ cannot have support of size $1$, the minimal supports of $A$ are of size at least $2$, and if a minimal support $\sigma$ has two elements, then its two positive
 entries
are equal. Zeros and zero supports were studied in \cite{Hildebrand14}, and the next proposition sums up  Lemma 3.5 and Corollary 3.6 there.

\begin{proposition}\label{pro:zeros}
Let $A\in \cop_n$.
\begin{itemize}
\item[{\rm(a)}] To every minimal support $\sigma$ of $A$ corresponds a unique, up to scalar multiplication, zero of $A$.
\item[{\rm(b)}] Every zero of $A$ is a nonnegative combination of minimal zeros of $A$. Thus every zero support is the
union of minimal supports.
\end{itemize}
\end{proposition}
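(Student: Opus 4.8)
The plan is to prove both parts by analyzing, for a fixed zero support $\sigma$, the nullspace of the principal submatrix $A[\sigma]$ together with the nonnegativity constraints. The starting point is Proposition \ref{pro:A[zs]}: if $\u\in\mathcal{V}^A$ has support $\sigma$, then $A[\sigma]$ is positive semidefinite and $\u[\sigma]\in\ker A[\sigma]$. The converse also holds: since $A[\sigma]$ is positive semidefinite, a nonnegative vector $\w$ supported on $\sigma$ is a zero of $A$ if and only if $\w[\sigma]\in\ker A[\sigma]$, because $\w^TA\w=\w[\sigma]^TA[\sigma]\w[\sigma]$ and a positive semidefinite quadratic form vanishes exactly on the kernel. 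Thus the zeros of $A$ supported on $\sigma$ are precisely the nonzero elements of the cone $C_\sigma:=\ker A[\sigma]\cap\R^\sigma_+$, which is polyhedral and pointed (pointed because $\R^\sigma_+\cap(-\R^\sigma_+)=\{\0\}$). Both parts follow from understanding $C_\sigma$.

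For part (a), fix a minimal support $\sigma$ and a corresponding zero $\u$, so $\u[\sigma]>0$ strictly. I claim $\dim\ker A[\sigma]=1$. If not, choose $\w\in\ker A[\sigma]$ independent from $\u[\sigma]$ and set $g(t)=\u[\sigma]+t\w$. Since $g(0)>0$ and $\w\ne\0$, the interval $\{t:g(t)\ge\0\}$ has a finite endpoint $t^*$ at which $g(t^*)$ is nonnegative, nonzero (otherwise $\u[\sigma]$ and $\w$ would be dependent), and has strictly smaller support than $\sigma$ (some coordinate vanishes). Extending $g(t^*)$ by zeros yields a zero of $A$ with support strictly inside $\sigma$, contradicting minimality. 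Hence $\ker A[\sigma]$ is one-dimensional, and every zero supported on $\sigma$ is a scalar multiple of $\u$.

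For part (b), take any zero $\u$ with support $\sigma$, so $\u[\sigma]\in C_\sigma$. As $C_\sigma$ is a pointed polyhedral cone, $\u[\sigma]$ is a nonnegative combination of the generators of its finitely many extreme rays, so it remains to identify those generators. The key step is: a nonzero $\w\in C_\sigma$ generates an extreme ray if and only if $\supp\w$ is a minimal support of $A$. For the forward direction, suppose $\supp\w$ is minimal and $\w=\w_1+\w_2$ with $\w_i\in C_\sigma$; nonnegativity forces $\supp\w_i\subseteq\supp\w$, so each nonzero $\w_i$ is a zero supported inside the minimal support $\supp\w$, hence supported on exactly $\supp\w$, and by part (a) is a positive multiple of $\w$ — so $\w$ is extreme. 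Conversely, if $\supp\w$ is not minimal there is a zero $\z$ with $\supp\z\subsetneq\supp\w$, and then $\w=(\w-\epsilon\,\z[\sigma])+\epsilon\,\z[\sigma]$ for small $\epsilon>0$ is a decomposition into two elements of $C_\sigma$ with different supports, so $\w$ is not extreme. Therefore the extreme generators of $C_\sigma$, extended by zeros, are exactly the minimal zeros of $A$ supported on $\sigma$, and $\u=\sum_k c_k\,\tilde\w^{(k)}$ with $c_k\ge0$ and each $\tilde\w^{(k)}$ a minimal zero, which is the first assertion of (b). Since all terms are nonnegative there is no cancellation, so $\supp\u=\bigcup_{c_k>0}\supp\tilde\w^{(k)}$, establishing that every zero support is a union of minimal supports.

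I expect the main obstacle to be the extreme-ray/minimal-support correspondence in part (b): one must check both implications carefully and, crucially, invoke the uniqueness from part (a) to upgrade ``support equals a minimal support'' to ``scalar multiple,'' which is precisely what collapses a putative decomposition onto a single ray. By contrast, the boundary argument in part (a) and the pointedness and finite generation of $C_\sigma$ are routine once this correspondence is in place.
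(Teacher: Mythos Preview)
The paper does not give its own proof of this proposition: it is quoted as a known result, summing up Lemma~3.5 and Corollary~3.6 of \cite{Hildebrand14}. So there is no in-paper argument to compare against.

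That said, your proof is correct and is essentially the standard route. A couple of minor remarks. In part~(a) you actually prove the slightly stronger fact that $\dim\ker A[\sigma]=1$ when $\sigma$ is a minimal support; this is fine, and your boundary argument (push $\u[\sigma]$ along an independent kernel direction until a coordinate hits zero) is the clean way to see it. In part~(b), the identification of the extreme rays of $C_\sigma$ with minimal zeros is exactly right, and you correctly use part~(a) at the key step to conclude that any $C_\sigma$-summand of an extreme $\w$ must be a scalar multiple of $\w$. The only point worth spelling out a bit more is that in the converse, for sufficiently small $\varepsilon>0$ one has $\supp(\w-\varepsilon\,\z[\sigma])=\supp\w\ne\supp\z$, so the two summands are genuinely non-parallel; you state ``different supports'' but it helps to note explicitly why the support of $\w-\varepsilon\,\z[\sigma]$ does not shrink.
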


We also need the following, which is Corollary 3.12 in \cite{Hildebrand14}.

\begin{proposition}\label{pro:union of zero supports}
Let $A$ be a copositive matrix and $\u, \v$ minimal zeros of $A$ such that $|\supp\v \setminus \supp \u|=1$.  Then every zero of $A$ with support
contained in $\supp\u\cup \supp\v$ can be represented as a nonnegative combination of $\u$ and $\v$. In
particular, up to multiplication by a positive scalar, the only minimal zeros with support contained in $\supp \u \cup \supp \v$ are $\u$ and $\v$.
\end{proposition}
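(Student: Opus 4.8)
The plan is to reduce to the case where $\sigma:=\supp\u\cup\supp\v$ is the whole index set and then identify the set of zeros supported in $\sigma$ with the two‑dimensional cone generated by $\u$ and $\v$, using only that principal submatrices on zero supports are positive semidefinite (Proposition \ref{pro:A[zs]}), copositivity of $A$, and the minimality of both $\u$ and $\v$. First I would restrict to $\sigma$: since principal submatrices of copositive matrices are copositive, $A[\sigma]$ is copositive, and restriction to $\sigma$ together with extension by zeros gives a support‑preserving bijection between zeros of $A$ supported in $\sigma$ and zeros of $A[\sigma]$; this bijection also preserves minimality, since a smaller‑support zero of $A[\sigma]$ would extend to one of $A$, contradicting the minimality of $\u$ or $\v$. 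Hence I may assume $\sigma=\{1,\dots,n\}$. Writing $j$ for the unique index of $\supp\v\setminus\supp\u$, the hypotheses force $\supp\u=\{1,\dots,n\}\setminus\{j\}$ (so $\hat\u:=\u[\supp\u]>0$ entrywise) and $v_j>0$.

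The technical heart is the claim that, with $P=\supp\u$ and $N:=A[P]$, the matrix $N$ is positive semidefinite with $\ker N=\R\hat\u$. Positive semidefiniteness and $\hat\u\in\ker N$ are immediate from Proposition \ref{pro:A[zs]}. For one‑dimensionality I would argue by contradiction: if $\dim\ker N\ge 2$, pick $\y\in\ker N$ not parallel to $\hat\u$ with a positive coordinate (replace $\y$ by $-\y$ if needed), and set $t^\ast:=\min_{i:\,y_i>0}\hat u_i/y_i>0$; then $\hat\u-t^\ast\y$ is a nonzero, nonnegative kernel vector with at least one extra zero coordinate, i.e.\ with support strictly inside $P$. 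Because $N$ is positive semidefinite, any nonnegative $\hat\z\in\ker N$ satisfies $\hat\z^TN\hat\z=0$, so extending $\hat\u-t^\ast\y$ by a zero in coordinate $j$ yields a zero of $A$ whose support is strictly contained in $\supp\u$, contradicting minimality of $\u$. Thus $\ker N\cap\R^P_+=\R_{\ge 0}\hat\u$.

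I would then classify an arbitrary zero $\w$ of $A$ by the value of $w_j$. If $w_j=0$, then $\w[P]$ is a nonnegative kernel vector of $N$, hence a nonnegative multiple of $\hat\u$, so $\w\in\R_{\ge0}\u$. If $w_j>0$, put $t=w_j/v_j>0$ and $\z=\w-t\v$, so $z_j=0$. Copositivity gives $\w^TA\v\ge 0$, since for $s\ge0$ we have $0\le(\w+s\v)^TA(\w+s\v)=2s\,\w^TA\v$; therefore $\z^TA\z=\w^TA\w-2t\,\w^TA\v+t^2\v^TA\v=-2t\,\w^TA\v\le 0$, while $z_j=0$ forces $\z^TA\z=\z[P]^TN\z[P]\ge 0$. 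Hence $\z[P]\in\ker N=\R\hat\u$, giving $\z=s\u$ and $\w=s\u+t\v$ for some $s\in\R$. To get $s\ge0$ I use minimality of $\v$: it excludes $\supp\u\subsetneq\supp\v$, so $P\setminus\supp\v\neq\emptyset$, and for $i\in P\setminus\supp\v$ we get $0\le w_i=s\,u_i+t\,v_i=s\,u_i$ with $u_i>0$, whence $s\ge0$. This shows every zero supported in $\sigma$ is a nonnegative combination of $\u$ and $\v$.

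Finally, for the ``in particular'' statement, a minimal zero $\w$ supported in $\sigma$ is $s\u+t\v$ with $s,t\ge0$; if both were positive then $\supp\w=\supp\u\cup\supp\v\supsetneq\supp\u$, contradicting minimality, so $\w$ is a positive multiple of $\u$ or of $\v$. The only step requiring genuine care is the kernel‑dimension claim, where the strict positivity of $\hat\u$, the positive semidefiniteness of $N$, and the minimality of $\u$ must be combined precisely; once that is in place the remaining arguments are routine bookkeeping with the block structure and the sign constraints coming from nonnegativity and minimality of $\v$.
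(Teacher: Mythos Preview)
The paper does not supply its own proof of this proposition; it is quoted from \cite{Hildebrand14} (Corollary~3.12 there) as part of the preliminaries, so there is no in-paper argument to compare against. Your proof is correct and self-contained: the reduction to $A[\sigma]$, the use of Proposition~\ref{pro:A[zs]} to get $N=A[P]$ positive semidefinite with $\hat\u$ in its kernel, the minimality-of-$\u$ argument forcing $\ker N=\R\hat\u$, the case split on $w_j$ together with the copositivity inequality $\w^TA\v\ge 0$, and the use of minimality of $\v$ to secure $s\ge 0$ all go through as you describe. One small wording remark: your contradiction argument actually establishes the full statement $\ker N=\R\hat\u$, not merely $\ker N\cap\R^P_+=\R_{\ge0}\hat\u$, and you (rightly) invoke the stronger version later when concluding $\z[P]\in\ker N=\R\hat\u$ for the possibly sign-mixed vector $\z[P]$; it would be cleaner to state the stronger conclusion at the end of that paragraph.
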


The exceptional extremal matrices in $\cop_5$ were completely characterized in \cite{Hildebrand12}. They consist
of the matrices in the orbit of the Horn matrix (\ref{eq:Horn}), and matrices, now called \emph{Hildebrand matrices}. The
 Horn matrix has exactly five minimal suports:
$\{1,2\}$, $\{2,3\}$, $\{3,4\}$, $\{4,5\}$ and $\{1,5\}$.
Its minimal zeros are $\w_i=\e_i+\e_{i\widehat{+}1}\in \R^5$ and its zeros are the vectors of the form $s\w_i+t\w_{i\widehat{+}1}$, $s,t>0$.
 where $\widehat{+}$ denotes summation modulo $5$.
Every Hildebrand matrix has exactly five  zeros, up to multiplication by scalar, all of them minimal, and each with support of size $3$. The minimal supports
are, up to permutations, $\{1,2,3\}$ , $\{2,3,4\}$ , $\{3,4,5\}$ ,  $\{1,4,5\}$ and $\{1,2,5\}$.
(Note that if $B$ is in the orbit of $A\in \cop_n$, then the minimal/zero supports of $B$ are obtained from the minimal/zero supports of $A$ by permutation.)

In \cite{Hildebrand14} all the potential minimal support sets of extremal matrices in $\cop_6$ were found. These are, up to permutation, the
sets in Table 1.

\begin{table}[t]
   \centering
 \begin{tabular}{c}
 Table 1:\\
  Potential minimal support sets of exceptional extremal  $M\in\cop_6$ with $\diag(M)> 0$\\
 ~
\end{tabular}\\
\resizebox{16cm}{!}{
\begin{tabular}{|c|l|c|l|}
\hline
No.& potential minimal supports set& No. & potential minimal supports set\\ \hline
1& \{1,2\},\{1,3\},\{1,4\},\{2,5\},\{3,6\},\{5,6\}& 23 &\{1,2,3\},\{1,2,4\},\{1,2,5\},\{1,3,6\},\{2,4,6\},\{3,4,5,6\}\\
2& \{1,2\},\{1,3\},\{1,4\},\{2,5\},\{3,6\},\{4,5,6\}& 24 & \{1,2,3\},\{1,2,4\},\{1,2,5\},\{1,3,6\},\{3,4,6\},\{3,5,6\}\\
3& \{1,2\},\{1,3\},\{1,4\},\{2,5\},\{3,5,6\},\{4,5,6\}& 25 &\{1,2,3\},\{1,2,4\},\{1,2,5\},\{1,3,6\},\{3,4,6\},\{4,5,6\}\\
4& \{1,2\},\{1,3\},\{1,4\},\{2,5,6\},\{3,5,6\},\{4,5,6\}& 26 & \{1,2,3\},\{1,2,4\},\{1,3,5\},\{1,4,5\},\{2,3,6\},\{2,4,6\}\\
5& \{1,2\},\{1,3\},\{2,4\},\{3,4,5\},\{1,5,6\},\{4,5,6\}& 27 &\{1,2,3\},\{1,2,4\},\{1,3,5\},\{1,4,5\},\{2,3,6\},\{3,4,6\}\\
6& \{1,2\},\{1,3\},\{1,4,5\},\{2,4,6\},\{3,4,6\},\{4,5,6\}& 28 & \{1,2,3\},\{1,2,4\},\{1,3,5\},\{2,4,5\},\{3,4,5\},\{2,3,6\}\\
7& \{1,2\},\{1,3\},\{2,4,5\},\{3,4,5\},\{2,4,6\},\{3,4,6\}& 29 &\{1,2,3\},\{1,2,4\},\{1,3,5\},\{2,4,5\},\{2,3,6\},\{2,5,6\}\\
8& \{1,2\},\{1,3\},\{2,4,5\},\{3,4,5\},\{2,4,6\},\{3,5,6\}& 30 &\{1,2,3\},\{1,2,4\},\{1,3,5\},\{2,4,5\},\{3,4,6\},\{3,5,6\}\\
9& \{1,2\},\{3,4\},\{1,3,5\},\{2,4,6\},\{1,5,6\},\{4,5,6\}& 31 &\{1,2,3\},\{1,2,4\},\{1,3,5\},\{2,4,5\},\{1,5,6\},\{2,5,6\}\\
10& \{1,2\},\{1,3,4\},\{1,3,5\},\{2,3,6\},\{3,4,6\},\{3,5,6\}& 32 &\{1,2,3\},\{1,2,4\},\{1,3,5\},\{2,4,5\},\{1,5,6\},\{4,5,6\}\\
11& \{1,2\},\{1,3,4\},\{1,3,5\},\{1,4,6\},\{2,5,6\},\{3,5,6\}& 33 &\{1,2,3\},\{1,2,4\},\{1,3,5\},\{2,4,5\},\{3,5,6\},\{4,5,6\}\\
12& \{1,2\},\{1,3,4\},\{1,3,5\},\{1,4,6\},\{3,5,6\},\{4,5,6\}& 34 &\{1,2,3\},\{1,2,4\},\{1,3,5\},\{2,4,6\},\{3,5,6\},\{4,5,6\}\\
13& \{1,2\},\{1,3,4\},\{1,3,5\},\{2,4,6\},\{3,4,6\},\{2,5,6\}& 35 &\{1,2,3,4\},\{1,2,3,5\},\{1,2,4,6\},\{1,3,5,6\},\{2,4,5,6\},\{3,4,5,6\}\\
14& \{1,2\},\{1,3,4\},\{1,3,5\},\{2,4,6\},\{3,4,6\},\{3,5,6\}& 36 &\{1,2\},\{1,3\},\{1,4\},\{2,5\},\{4,5\},\{3,6\},\{5,6\}\\
15& \{1,2\},\{1,3,4\},\{1,3,5\},\{2,4,6\},\{3,4,6\},\{4,5,6\}& 37 &\{1,2\},\{1,3,4\},\{1,3,5\},\{1,4,6\},\{2,5,6\},\{3,5,6\},\{4,5,6\}\\
16& \{1,2\},\{1,3,4\},\{1,3,5\},\{2,4,6\},\{3,5,6\},\{4,5,6\}& 38 &\{1,2\},\{1,3,4\},\{1,3,5\},\{2,4,6\},\{3,4,6\},\{2,5,6\},\{3,5,6\}\\
17& \{1,2\},\{1,3,4\},\{2,3,5\},\{3,4,5\},\{2,4,6\},\{3,4,6\}& 39 &\{1,2,3\},\{1,2,4\},\{1,2,5\},\{1,3,6\},\{1,4,6\},\{2,5,6\},\{3,5,6\}\\
18& \{1,2,3\},\{1,2,4\},\{1,2,5\},\{1,3,6\},\{1,4,6\},\{1,5,6\}& 40 &\{1,2,3\},\{1,2,4\},\{1,2,5\},\{1,3,6\},\{1,4,6\},\{3,5,6\},\{4,5,6\}\\
19& \{1,2,3\},\{1,2,4\},\{1,2,5\},\{1,3,6\},\{1,4,6\},\{2,5,6\}& 41 &\{1,2,3\},\{1,2,4\},\{1,2,5\},\{1,3,6\},\{2,4,6\},\{3,4,6\},\{3,5,6\}\\
20& \{1,2,3\},\{1,2,4\},\{1,2,5\},\{1,3,6\},\{1,4,6\},\{3,5,6\}& 42 &\{1,2,3\},\{1,2,4\},\{1,2,5\},\{1,3,6\},\{2,4,6\},\{3,5,6\},\{4,5,6\}\\
21& \{1,2,3\},\{1,2,4\},\{1,2,5\},\{1,3,6\},\{2,4,6\},\{3,4,6\}& 43 &\{1,2,3\},\{1,2,4\},\{1,2,5\},\{1,3,6\},\{1,4,6\},\{2,5,6\},\{3,5,6\},\{4,5,6\}\\
22& \{1,2,3\},\{1,2,4\},\{1,2,5\},\{1,3,6\},\{2,4,6\},\{3,5,6\}& 44 &\{1,2,3\},\{1,2,4\},\{1,3,5\},\{1,4,5\},\{2,3,6\},\{2,4,6\},\{3,5,6\},\{4,5,6\}\\
\hline
\end{tabular}}
\end{table}

A few of the sets in the table have been confirmed as minimal supports sets of exceptional extremal matrices, and some excluded (some by Hildebrand and some by Dickinson), but
for the majority of these sets it is yet unknown whether they are indeed minimal support sets. Since these additional results have not yet been properly published, we will not use
them, and will show that in any case, \emph{if} there exists an exceptional extreme matrix $M$ with one
of these minimal supports set, then any $A\in \cop_6$ orthogonal to $M$ has $\cpr (A)\le 9$.

\section{Proof of the main result}
Given a matrix $M\in \cop_n$ with some zeros, let $\{\sigma_1, \dots, \sigma_k\}$ be the set of its  minimal supports, and let $\w_1, \dots, \w_k$ be
minimal zeros such that $\supp(\w_i)=\sigma_i$. We set
\begin{equation}W=\left(\w_1|\dots |\w_k\right)\in \R^{n\times k}_+ \label{eq:W}\end{equation}
and refer to $W$ as \emph{the matrix of minimal zeros} of $M$. It is, of course, unique only up to permutation of the columns and multiplication on the right
by a positive diagonal matrix (Proposition \ref{pro:zeros}(a)).

\begin{observation}\label{ob:AorthM}
Let $A\in \cp_n$ be orthogonal to $M\in \cop_n$, and let $W\in \R^{n\times k}_+$ be the matrix of minimal zeros of $M$.  If $A=BB^T$ is a cp-factorization of $A$ with
$B\in \R^{n\times m}_+$, then
 there exists a nonnegative $X\in \R^{k\times m}_+$ such that $B=WX$, and for every such $X$, $\cpr(A)\le \cpr(XX^T)$.
\end{observation}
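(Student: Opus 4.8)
The plan is to exploit the orthogonality relation $\langle A,M\rangle=0$ together with the copositivity of $M$ to force every column of $B$ to be a zero of $M$, and then to invoke Proposition~\ref{pro:zeros}(b) to express each such zero through the fixed minimal zeros $\w_1,\dots,\w_k$.

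First I would write $B=(\b_1\mid\dots\mid\b_m)$, so that $A=\sum_{i=1}^m\b_i\b_i^T$ and
\[
0=\langle A,M\rangle=\trace(MA)=\sum_{i=1}^m\trace(M\b_i\b_i^T)=\sum_{i=1}^m\b_i^TM\b_i .
\]
Since each $\b_i\in\R^n_+$ and $M$ is copositive, every summand $\b_i^TM\b_i$ is nonnegative, hence all of them vanish. Thus for each $i$ either $\b_i=\0$ or $\b_i\in\mathcal{V}^M$. By Proposition~\ref{pro:zeros}(b) every zero of $M$ is a nonnegative combination of its minimal zeros, so there are scalars $x_{ji}\ge 0$ with $\b_i=\sum_{j=1}^k x_{ji}\w_j$ (taking all $x_{ji}=0$ when $\b_i=\0$). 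Collecting the $x_{ji}$ into $X=(x_{ji})\in\R^{k\times m}_+$ gives $B=WX$, which is the first assertion.

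For the inequality, fix any nonnegative $X$ with $B=WX$; then $A=BB^T=W(XX^T)W^T$. The matrix $XX^T$ is completely positive (being a nonnegative Gram matrix), so it admits a minimal cp-factorization $XX^T=YY^T$ with $Y\in\R^{k\times p}_+$ and $p=\cpr(XX^T)$. Then $A=WYY^TW^T=(WY)(WY)^T$, and $WY\ge 0$ since $W,Y\ge 0$; this is a cp-factorization of $A$ using $p$ columns, so $\cpr(A)\le p=\cpr(XX^T)$. (If $XX^T=0$ then $A=0$ and there is nothing to prove.)

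The argument is essentially bookkeeping once the structural input is in place; the one nontrivial ingredient is Proposition~\ref{pro:zeros}(b), i.e.\ the fact that the zero set $\mathcal{V}^M$ is exactly the nonnegative hull of the finitely many minimal zeros collected as columns of $W$ — without it one could only conclude that each $\b_i$ lies in $\mathcal{V}^M$, not that it is recovered through a \emph{single} fixed matrix $W$, and the reduction from $\cp_n$ to $\cp_k$ that the observation is designed to provide would break down.
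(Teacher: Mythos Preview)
Your proof is correct and follows essentially the same route as the paper's: the paper also invokes Proposition~\ref{pro:zeros}(b) to write $B=WX$ and then pushes a minimal cp-factorization $XX^T=YY^T$ through $W$ to get $A=(WY)(WY)^T$. Your version is in fact slightly more explicit, since you spell out the trace computation $0=\langle A,M\rangle=\sum_i\b_i^TM\b_i$ that forces each column of $B$ to be a zero of $M$, whereas the paper leaves this step implicit.
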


\begin{proof}
By Proposition \ref{pro:zeros}(b), every column of $B$ is a nonnegative combination of the columns of $W$, hence $B=WX$ for some $X\in \R^{k\times m}_+$.
If $YY^T$ is a minimal cp-factorization of $XX^T$ with $Y\in \R^{k\times p}_+$, then
\[A=BB^T=(WX)(WX)^T=W(XX^T)W^T=W(YY^T)W^T=(WY)(WY)^T,\]
where $WY\in \R^{n\times p}_+$, implying that $\cpr (A)\le p$.
\end{proof}

Using the above observation, we can improve the bound in \cite[Proposition 6.1]{ShakBerBomJarScha14} on the cp-ranks of
matrices orthogonal to a matrix $M$ in the orbit of $H\oplus 0$, where $H$ is either the Horn matrix or a Hildebrand matrix.

\begin{lemma}\label{lem:M is H+0}
Let $M\in \cop_6$ be an exceptional extremal matrix with a zero diagonal entry. If $A\in \cp_6$ is orthogonal to
$M$, then $\cpr (A)\le 7$.
\end{lemma}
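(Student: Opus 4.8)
The plan is to reduce $M$ to the form $H\oplus 0$ with $H$ the Horn matrix or a Hildebrand matrix, read off the minimal zeros of $M$, and then estimate the cp-rank of an associated $6\times 6$ completely positive matrix whose graph lies inside the wheel $W_6$. First I would carry out the reduction. Suppose $m_{ii}=0$. By Proposition~\ref{pro:cop diag 1}(a), $m_{ij}=0$ for all $j\ne i$ and $M(i)\in\cop_5$ is extremal; moreover $M(i)$ is exceptional, since otherwise writing $M(i)=P+N$ with $P\in\psd_5$ and $N$ nonnegative and inserting a zero row and column in position $i$ into each of $P$ and $N$ would exhibit $M$ as SPN. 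Since $\cop_4=\spn_4$, there is no $4\times 4$ exceptional extremal copositive matrix, so $M(i)$ has positive diagonal, and after a diagonal congruence $\diag(M(i))=\1$. By the classification of the exceptional extremal matrices of $\cop_5$, $M(i)$ is then, up to a permutation of its indices, the Horn matrix $H$ or a Hildebrand matrix. Since replacing $A$ and $M$ by appropriate matrices in their orbits preserves both $\cpr(A)$ and the relation $\langle A,M\rangle=0$, we may assume $M=H\oplus 0\in\cop_6$, where $H$ is the Horn matrix or a Hildebrand matrix, the zero diagonal entry sits in position $6$, and (relabelling $1,\dots,5$ if needed) $H$ has the standard minimal supports recorded in Section~\ref{prelim}.

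Next I would describe the minimal zeros of $M$ and use orthogonality. Fix a cp-factorization $A=BB^T$ with $B\in\R^{6\times m}_+$. Since $M$ is copositive, $\langle A,M\rangle=\trace(B^TMB)$ is a sum of the nonnegative numbers $\b^TM\b$ over the columns $\b$ of $B$, and this sum is $0$, so every column $\b$ of $B$ is a zero of $M$. A vector $\u\in\R^6_+$ is a zero of $M=H\oplus 0$ exactly when $\u[\{1,\dots,5\}]$ is $\0$ or a zero of $H$; hence the minimal zeros of $M$ are $\e_6$ together with the vectors $\w_1,\dots,\w_5\in\R^6$ obtained by padding the minimal zeros of $H$ with a zero in coordinate $6$. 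If $H$ is the Horn matrix, every zero of $H$ has the form $s\w_i+t\w_{i\widehat{+}1}$ with $s,t\ge 0$ (consecutive indices on the $5$-cycle), so each column of $B$ equals $s\w_i+t\w_{i\widehat{+}1}+b_6\e_6$ for some $i$; if $H$ is a Hildebrand matrix, every zero of $H$ is a positive multiple of one of its five minimal zeros, so each column of $B$ equals $c\w_i+b_6\e_6$ for some $i$. Writing $W=(\e_6\mid \w_1\mid\cdots\mid\w_5)\in\R^{6\times 6}_+$, with the index of the $\e_6$ column called $0$, these descriptions yield an explicit factorization $B=WX$ with $X\in\R^{6\times m}_+$ whose $j$-th column is supported on $\{0,i,i\widehat{+}1\}$ in the Horn case and on $\{0,i\}$ in the Hildebrand case; reading $X$ directly off the zeros in this way avoids having to invert $W$.

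Finally I would pass to $C:=XX^T\in\cp_6$, for which $\cpr(A)\le\cpr(C)$ by Observation~\ref{ob:AorthM}. Grouping the rank-one terms $\x_j\x_j^T$ by their index $i$ and writing $C=\sum_i C_i$ accordingly shows that the only off-diagonal entries of $C$ that can be nonzero are $c_{0i}$ and, in the Horn case, $c_{i,i\widehat{+}1}$. Thus $G(C)$ is a subgraph of the wheel $W_6$ with hub $0$ and rim the $5$-cycle $1,2,3,4,5,1$ in the Horn case, and a subgraph of the star $K_{1,5}$ in the Hildebrand case. By Propositions~\ref{pro:subgraph cpr} and \ref{pro:cprWn}, $\cpr(C)\le\cpr(W_6)=7$ in the Horn case; since $G(C)$ is triangle free with at most $5$ edges on $6$ vertices, Proposition~\ref{pro:tf} gives $\cpr(C)\le 6$ in the Hildebrand case. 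Either way $\cpr(A)\le 7$.

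The substantive parts are the reduction (verifying that exceptionality descends to $M(i)$ and invoking the $\cop_5$ classification) and, more importantly, the precise determination of which minimal zeros of $M$ can share a column of $B$, since that is exactly what forces $G(C)$ into a wheel. Once this structural fact is in hand, the bound follows at once from the known value $\cpr(W_6)=7$; note that the Horn case is the tight one, while the Hildebrand case gives the slightly stronger conclusion $\cpr(A)\le 6$.
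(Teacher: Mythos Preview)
Your proposal is correct and follows essentially the same route as the paper: reduce $M$ to $H\oplus 0$ with $H$ Horn or Hildebrand, read off the minimal zeros, apply Observation~\ref{ob:AorthM} to pass to $XX^T$, and bound $\cpr(XX^T)$ via the wheel $W_6$ or the star $K_{1,5}$. Your reduction step is spelled out in more detail than the paper's (you verify that exceptionality descends to $M(i)$ and that $M(i)$ has positive diagonal, whereas the paper argues directly that two zero diagonal entries in $M$ would force $M$ to be SPN), but the substance is the same.
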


\begin{proof}
By Proposition \ref{pro:cop diag 1} an extremal matrix in $\cop_6$ with two zero
diagonal entries is a direct sum of a $4\times 4$ SPN matrix and a $2\times 2$ zero matrix, and is therefore SPN. Since $M$ is exceptional, it has exactly one zero entry on the diagonal.
Since $M$ is extremal, it is in the orbit of a matrix $H\oplus 0$, where $H$ is either the Horn matrix or a Hildebrand matrix. We may assume that $M=H\oplus 0$.
For every zero $\u$ of $M$, $\u[1,2,3,4,5]$ is a zero of $H$.

If $H$ is the Horn matrix, the minimal zeros of $M$ are $\w_i=\e_i+\e_{i\widehat{+}1}\in \R^6$, $i=1, \dots, 5$, where $\widehat{+}$ denotes summation modulo 5, and $\w_6=\e_6$.
Let $W=\left(\w_1|\dots |\w_6\right)$
be the matrix of minimal zeros of $M$.
By Observation \ref{ob:AorthM},  $A=(WX)(WX)^T$ for some $X\in \R^{6\times k}_+$, and $\cpr(A)\le \cpr(XX^T)$. Since every zero of $M$ is a nonnegative combination of $\w_i,
\w_{i\widehat{+}1}$ and $\w_6$ for some $1\le i\le 5$, $G(XX^T)$ is a subgraph of the wheel $W_6$ and, by Proposition \ref{pro:cprWn}, $\cpr(XX^T)\le \cpr(W_6)=7$.

If $H$ is a Hildebrand matrix, then $M$ has six minimal zeros: five zeros $\w_1,\dots, \w_5$ obtained by appending a zero entry to each (minimal) zero of $H$, and $\w_6=\e_6$.
As
above,  $A=(WX)(WX)^T$, where $W$ is the matrix of minimal zeros of $M$ and $X\in \R^{6\times k}_+$, and $\cpr(A)\le \cpr(XX^T)$. In this case, every zero of $M$ is a
nonnegative
combination of $\w_i$ and $\w_6$ for some $1\le i\le 5$, so $G(XX^T)$ is a subgraph of the star on $6$ vertices. A star is a tree  and thus,  by Proposition
\ref{pro:tf}, its cp-rank is equal to the number of its vertices. Thus $\cpr(XX^T)\le 6$.
\end{proof}

To find good bounds on the cp-rank for matrices orthogonal to an exceptional extremal matrix $M\in\cop_6$ with
positive diagonal we need also some lemmas about the zero supports of such $M$. We may assume that $\diag(M)=\1$.
Note that in this case each zero support has at least two elements, and thus zero supports of size $2$ are necessarily
minimal. The next lemma states that the union of two non-disjoint size $2$ zero supports of $M$ is also a zero support of $M$.

\begin{lemma}\label{lem:z union}
Let $M\in \cop_n$ be an extremal copositive matrix with $\diag(M)=\1$. If $\{i,j\}$ and $\{j,k\}$ are minimal supports of $M$, then
$\{i,j,k\}$ is a zero support of $M$, and $\{i,k\}$ is not a zero support of $M$.
\end{lemma}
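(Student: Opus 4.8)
The plan is to pin down the relevant off-diagonal entries of $M$ and then exhibit an explicit zero supported on $\{i,j,k\}$. Since $\{i,j\}$ is a minimal support of $M$ and $\diag(M)=\1$, Proposition~\ref{pro:A[zs]} tells us that the $2\times 2$ principal submatrix $M[\{i,j\}]$, which has $1$'s on its diagonal, is positive semidefinite and has a strictly positive vector in its nullspace; solving the resulting $2\times 2$ system forces $m_{ij}=-1$, and the minimal zero supported on $\{i,j\}$ is, after scaling, $\w_1=\e_i+\e_j$ (unique up to a scalar by Proposition~\ref{pro:zeros}(a)). In exactly the same way $m_{jk}=-1$ and the minimal zero supported on $\{j,k\}$ is $\w_2=\e_j+\e_k$. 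I would also note at the outset that when $\diag(M)=\1$ every zero support of size $2$ is automatically minimal (a singleton cannot be a zero support since the corresponding diagonal entry is $1$), so the hypothesis loses nothing by being phrased in terms of minimal supports.

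Next I would test the vector $\u=\w_1+\w_2=\e_i+2\e_j+\e_k$, whose support is exactly $\{i,j,k\}$. Expanding and using that $\w_1$ and $\w_2$ are zeros of $M$, one gets $\u^TM\u=\w_1^TM\w_1+2\,\w_1^TM\w_2+\w_2^TM\w_2=2\,\w_1^TM\w_2$, and a direct computation with the entries found above gives $\w_1^TM\w_2=m_{ij}+m_{jj}+m_{jk}+m_{ik}=m_{ik}-1$. Since $M$ is copositive and $\u\in\R^n_+$, we get $2(m_{ik}-1)=\u^TM\u\ge 0$, hence $m_{ik}\ge 1$; and since $M$ is extremal with $\diag(M)=\1$, Proposition~\ref{pro:cop diag 1}(b) gives $m_{ik}\le 1$. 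Therefore $m_{ik}=1$, so $\u^TM\u=0$ and $\u$ is a zero of $M$ with support $\{i,j,k\}$, which proves the first assertion. For the second, if $\{i,k\}$ were a zero support there would be a zero $\v$ with $v_i,v_k>0$ (and all other entries $0$), and then, using $m_{ik}=1$, $\v^TM\v=v_i^2+2v_iv_k m_{ik}+v_k^2=(v_i+v_k)^2>0$, contradicting $\v^TM\v=0$; hence $\{i,k\}$ is not a zero support.

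The only step that needs any care is the identity $m_{ik}=1$. The lower bound $m_{ik}\ge 1$ comes from testing copositivity against the single well-chosen vector $\e_i+2\e_j+\e_k$, while the upper bound $m_{ik}\le 1$ genuinely relies on extremality: for a merely copositive $M$ with unit diagonal one can have $m_{ik}>1$ even when $\{i,j\}$ and $\{j,k\}$ are minimal supports, and then $\{i,j,k\}$ is \emph{not} a zero support, so the statement would fail without the extremality hypothesis. Everything else is $2\times 2$ linear algebra and bookkeeping of matrix entries, so I expect no further obstacle.
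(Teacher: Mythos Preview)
Your proof is correct and follows essentially the same route as the paper: determine $m_{ij}=m_{jk}=-1$, pin down $m_{ik}=1$ by combining the extremality bound $m_{ik}\le 1$ with a lower bound from copositivity, then exhibit the explicit zero $\e_i+2\e_j+\e_k$ and rule out $\{i,k\}$. The only cosmetic difference is that the paper obtains $m_{ik}\ge 1$ by observing that the $3\times 3$ principal submatrix $M[i,j,k]$ is SPN and invoking Proposition~\ref{pro:spnG-1connected}, whereas you get it by directly testing copositivity against $\e_i+2\e_j+\e_k$; your route is slightly more self-contained.
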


\begin{proof}
W.l.o.g. assume that $i=1$, $j=2$, $k=3$, and let $\sigma=\{1,2,3\}$. Then
\[M[\sigma]=\left( \begin{array}{rrr}
                    1 & -1 & a \\
                    -1 & 1 &-1 \\
                    a & -1 & 1
                  \end{array}
\right).\]
Since $M[\sigma]\in \spn_3$, necessarily $a\ge 1$  by Proposition \ref{pro:spnG-1connected}, and since $M$ is extremal, $a=1$  by Proposition \ref{pro:cop diag 1}(b). It is then
easy to see that there are zeros of $M$ with support $\sigma$ (e.g., $\u=\e_1+2\e_2+\e_3$), while $\{1,3\}$ is not a zero support.
\end{proof}

The next lemma gives a sufficient condition for a union of three zero supports  to be a zero support.

\begin{lemma}\label{lem:zero triangle}
Let $M\in \cop_n$, and let $\sigma_1,\sigma_2, \sigma_3$ be three minimal supports of $M$, such that
$\sigma_i\cup \sigma_j$ is a zero support for every $1\le i\ne j\le 3$. Then $\sigma_1\cup \sigma_2\cup \sigma_3$ is
a zero support of $M$.
\end{lemma}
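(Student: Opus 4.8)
The plan is to produce an explicit zero of $M$ with support $\sigma_1\cup\sigma_2\cup\sigma_3$, taking for it a positive combination of the minimal zeros carried by the three supports. So first I would fix, for $t=1,2,3$, a minimal zero $\w_t\in\R^n_+$ of $M$ with $\supp\w_t=\sigma_t$ (such a vector exists since $\sigma_t$ is a minimal support, and by Proposition \ref{pro:zeros}(a) it is unique up to scaling, although uniqueness plays no role). The candidate vector is $\z=\w_1+\w_2+\w_3$: it is nonnegative and $\supp\z=\sigma_1\cup\sigma_2\cup\sigma_3$, so everything reduces to checking $\z^TM\z=0$.

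Expanding $\z^TM\z=\sum_{t=1}^3\w_t^TM\w_t+2\sum_{1\le i<j\le 3}\w_i^TM\w_j$, the diagonal terms vanish because each $\w_t$ is a zero of $M$. Hence the heart of the argument is the claim that $\w_i^TM\w_j=0$ whenever $i\ne j$, and this is exactly where the hypothesis is used. Put $\tau=\sigma_i\cup\sigma_j$. By assumption $\tau$ is a zero support, so by Proposition \ref{pro:A[zs]} the principal submatrix $M[\tau]$ is positive semidefinite. Since $\supp\w_i,\supp\w_j\subseteq\tau$, all entries of $M$ occurring in $\w_i^TM\w_j$ (and in $\w_i^TM\w_i$) are indexed by $\tau$, so $\w_i^TM\w_j=\w_i[\tau]^TM[\tau]\w_j[\tau]$ and $\w_i[\tau]^TM[\tau]\w_i[\tau]=\w_i^TM\w_i=0$. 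For a positive semidefinite matrix a vector annihilating the quadratic form lies in the kernel, so $M[\tau]\w_i[\tau]=0$, and therefore $\w_i^TM\w_j=\w_i[\tau]^TM[\tau]\w_j[\tau]=0$. (One could instead invoke the standard fact that $\u^TM\v\ge 0$ for any two zeros $\u,\v$ of a copositive $M$, but that alone gives only $\ge 0$; it is the positive semidefiniteness of $M[\tau]$, coming from $\tau$ being a zero support, that forces equality.)

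With the claim in hand, $\z^TM\z=0$, so $\z$ is a zero of $M$ and $\sigma_1\cup\sigma_2\cup\sigma_3=\supp\z$ is a zero support, as required. I do not anticipate a genuine obstacle here; the one point to handle carefully is the bookkeeping that each cross term $\w_i^TM\w_j$ sees only the block $M[\sigma_i\cup\sigma_j]$, so that positive semidefiniteness of that single block --- guaranteed by the ``zero support'' hypothesis via Proposition \ref{pro:A[zs]} --- suffices to annihilate it. Note also that, in contrast with Lemma \ref{lem:z union}, neither extremality of $M$ nor any normalization of its diagonal is needed.
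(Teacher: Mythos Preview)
Your argument is correct, and it is genuinely different from the paper's. The paper also fixes minimal zeros $\w_1,\w_2,\w_3$, but instead of forming $\w_1+\w_2+\w_3$ it takes zeros $\x_i$ with $\supp\x_i=\sigma_i\cup\sigma_{i+1}$, writes $\x_i=a_i\w_i+b_i\w_{i+1}$ with $a_i,b_i>0$, and packages these as $X=WY$ with $Y$ the $3\times 3$ matrix of coefficients. Since $YY^T>0$, the nearly-positive result (Proposition~\ref{pro:NP}) supplies an orthogonal $Q$ with $QY^T>0$; then the columns of $WYQ^T$ are nonnegative, have support $\sigma_1\cup\sigma_2\cup\sigma_3$, and are zeros of $M$ because $\trace(M\,XX^T)=0$ forces each column of $WYQ^T$ to be a zero.

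Your route is shorter and more elementary: it avoids Proposition~\ref{pro:NP} entirely and uses only the standard fact that a vector annihilating a positive semidefinite quadratic form lies in the kernel. The key observation you isolate---that $M[\sigma_i\cup\sigma_j]$ being positive semidefinite forces $\w_i^TM\w_j=0$---is exactly what makes the direct candidate $\w_1+\w_2+\w_3$ work, and it bypasses the orthogonal-rotation machinery. Your remark that extremality and diagonal normalization are unnecessary is also accurate; neither proof uses them.
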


\begin{proof}
\cob{A corrected proof due to Peter~J.~C.~Dickinson: The result holds  for any number of zero supports $\sigma_i$, $i=1, \dots, m$, such
that $\sigma_i\cup \sigma_j$ is a zero support for every $i\ne j$:
For each $1\le i\le m$ let $\w_i$  be a  zero  of $M$ with support  $\sigma_i$.  
For every $i\ne j$ the principal submatrix $M[\sigma_i\cup \sigma_j]$ is  positive semidefinite, as $\sigma_i\cup \sigma_j$ is a zero support. 
Since  $\supp \w_i \subseteq \sigma_i\cup \sigma_j$, 
$(M\w_i)_l=0$ for every $l\in \sigma_i\cup \sigma_j$, combined with  $\supp \w_j\subseteq \sigma_i\cup \sigma_j$, 
this implies $\w_j^TM\w_i=0$.}

\cob{Let $\w=\sum_{i=1}^m\w_i$. Then $\supp\w=\cup_{i=1}^m\sigma_i$, and
\[\w^TM\w=\sum_{i,j=1}^m\w_j^TM\w_i=0, \]
i.e., $\w$ is a zero of $M$.}
\end{proof} 

Combining the last two lemmas, we get the following corollary:

\begin{corollary}\label{cor:3 size 2 supports}
Let $M\in \cop_n$, and let $\sigma_1,\sigma_2, \sigma_3$ be three different minimal supports of $M$ of size $2$, such that $\sigma_i\cap\sigma_j\ne \emptyset$ for
every $i\ne j$. Then $\sigma_1\cup \sigma_2\cup \sigma_3$ is a zero support of $M$, of size $4$.
\end{corollary}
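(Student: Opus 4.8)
The plan is to exploit the elementary fact that three pairwise-intersecting two-element sets can be arranged in only one of two ways: as a \emph{star}, in which all three contain a common vertex, or as a \emph{triangle}, in which each pair meets in a different single vertex. In the star case the union has four elements, in the triangle case only three. So the first step is to rule out the triangle configuration. Suppose, toward a contradiction, that after relabeling we have $\sigma_1=\{1,2\}$, $\sigma_2=\{2,3\}$, $\sigma_3=\{1,3\}$. Then $\{1,2\}$ and $\{2,3\}$ are minimal supports of $M$ meeting in the vertex $2$, so Lemma \ref{lem:z union} applies and tells us that $\{1,3\}$ is \emph{not} a zero support of $M$. But $\sigma_3=\{1,3\}$ is by hypothesis a minimal support, hence a zero support --- a contradiction. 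Consequently the three sets form a star: up to relabeling $\sigma_1=\{1,2\}$, $\sigma_2=\{1,3\}$, $\sigma_3=\{1,4\}$ with $2,3,4$ pairwise distinct, and in particular $|\sigma_1\cup\sigma_2\cup\sigma_3|=4$.

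With the configuration pinned down, the second step is a double application of the preceding two lemmas. For each pair $i\ne j$ the minimal supports $\sigma_i$ and $\sigma_j$ meet exactly in the vertex $1$, so Lemma \ref{lem:z union} shows that $\sigma_i\cup\sigma_j$ is a zero support of $M$. Thus $\sigma_1,\sigma_2,\sigma_3$ are minimal supports all of whose pairwise unions are zero supports, which is precisely the hypothesis of Lemma \ref{lem:zero triangle}; that lemma then yields that $\sigma_1\cup\sigma_2\cup\sigma_3=\{1,2,3,4\}$ is a zero support of $M$. Since we have already observed that this union has size $4$, the argument is complete.

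I do not expect any real obstacle here: the entire content is the combinatorial dichotomy for three pairwise-intersecting pairs together with the two invocations above. The one point requiring a moment's care --- and the only place where the ``$\{i,k\}$ is not a zero support'' half of Lemma \ref{lem:z union} is actually used --- is the exclusion of the triangle configuration; without it the union could a priori have size $3$ rather than $4$. (Note also that, as in Lemma \ref{lem:z union}, one reads the statement for $M$ extremal with $\diag(M)=\1$, which is the setting in which it will be applied.)
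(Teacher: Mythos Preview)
Your proof is correct and follows exactly the approach the paper indicates (``Combining the last two lemmas''): you make explicit the combinatorial dichotomy between the star and triangle configurations, use the second clause of Lemma~\ref{lem:z union} to exclude the triangle, and then apply Lemma~\ref{lem:z union} and Lemma~\ref{lem:zero triangle} in the star case. Your remark that the extremality and $\diag(M)=\mathbf{1}$ hypotheses of Lemma~\ref{lem:z union} are implicitly in force here is also correct and worth noting.
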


If $M$ is an exceptional extremal $M\in \cop_6$ whose diagonal is positive, each of its zero supports has at most $4$ elements by Proposition \ref{pro:maxsizesupp}.

\begin{lemma}\label{lem:size 3 supp union}
Let $M\in \cop_6$, and let $\sigma$ be a zero support of $M$ of size $4$. If $\sigma$ contains a minimal support of size $3$, then $\sigma$
contains exactly two minimal supports, and is equal to their union.
\end{lemma}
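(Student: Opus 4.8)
The statement to prove is Lemma 3.6: if $M\in\cop_6$, $\sigma$ is a zero support of size $4$, and $\sigma$ contains a minimal support $\tau$ of size $3$, then $\sigma$ contains exactly two minimal supports and $\sigma$ is their union. The plan is to argue via Proposition~\ref{pro:zeros}(b): the zero $\u$ with support $\sigma$ decomposes as a nonnegative combination of minimal zeros of $M$, each of whose support is contained in $\sigma$. So the first step is to enumerate which subsets of $\sigma$ can possibly be minimal supports. Write $\sigma=\{1,2,3,4\}$ with $\tau=\{1,2,3\}$ a minimal support. Any minimal support contained in $\sigma$ has size $2$, $3$, or $4$; size $4$ is impossible since that would be all of $\sigma$, contradicting minimality (as $\tau\subsetneq\sigma$ is already a support). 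A size-$3$ subset of $\sigma$ other than $\tau$ is one of $\{1,2,4\},\{1,3,4\},\{2,3,4\}$.

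\textbf{Key steps.} First I would show that $\sigma$ cannot contain a minimal support of size $2$. Suppose $\{i,j\}\subseteq\sigma$ is a minimal support of size $2$. I would split into the case $\{i,j\}\subseteq\tau$ and the case $|\{i,j\}\cap\tau|=1$. If $\{i,j\}\subsetneq\tau$, then $\tau$ properly contains a support, contradicting the minimality of $\tau$. If, say, $j\in\tau$ and $i\notin\tau$ (so $i=4$), then $\{i,j\}$ and some pair inside $\tau$ meeting it at $j$ — here I need to know $M$ has a minimal support inside $\tau$ meeting $\{i,j\}$ — would force structure via Lemma~\ref{lem:z union}; more directly, since $\{i,j\}$ is a minimal support of size $2$, its two entries in the minimal zero are equal and $m_{ij}=-1$ (Proposition~\ref{pro:A[zs]} forces $M[\{i,j\}]$ PSD with the zero in its nullspace, so $m_{ij}=-1$). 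Now $\u$ restricted to $\sigma$ lies in the nullspace of $M[\sigma]$ (Proposition~\ref{pro:A[zs]}), and $M[\sigma]$ is PSD with all diagonal entries positive; I would use the nullspace relation together with the fact that $\tau$ supports a zero (so $M[\tau]$ is PSD singular) to derive a contradiction with $\u$ having full support $\sigma$. The cleanest route: by Proposition~\ref{pro:zeros}(b), $\sigma$ is a union of minimal supports contained in it; if a size-$2$ minimal support $\{4,j\}$ is used, then the remaining elements $\sigma\setminus\{4,j\}\subseteq\tau$ must be covered by other minimal supports inside $\sigma$, and since $\tau$ is itself minimal no proper subset of $\tau$ is a support — so the only minimal support covering an element of $\tau\setminus\{j\}$ that could be used is $\tau$ itself or another size-$3$ set through $4$. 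I would then push this to show $\tau$ plus one more minimal support already suffice, establishing the conclusion directly, or rule the size-$2$ case out.

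Next, having reduced to minimal supports of size $3$ and $4$ only (and size $4$ excluded), I conclude $\sigma$ is a union of size-$3$ minimal supports from the list $\{\tau,\{1,2,4\},\{1,3,4\},\{2,3,4\}\}$. Since $\tau=\{1,2,3\}$ misses $4$, we need at least one more minimal support containing $4$; any one of the other three, unioned with $\tau$, already equals $\sigma$. So $\sigma$ is the union of $\tau$ and at least one other size-$3$ minimal support. It remains to show \emph{exactly two} are contained in $\sigma$ — i.e., $M$ cannot have two distinct size-$3$ minimal supports through $4$ both inside $\sigma$, nor all three, etc. Here I would invoke Proposition~\ref{pro:union of zero supports}: if $\tau'$ is a size-$3$ minimal support with $|\tau'\setminus\tau|=1$ (which holds for any $\tau'\in\{\{1,2,4\},\{1,3,4\},\{2,3,4\}\}$ versus $\tau$), then up to scalars the only minimal zeros with support contained in $\tau\cup\tau'=\sigma$ are those supporting $\tau$ and $\tau'$. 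Applying this with $\tau$ fixed and $\tau'$ the first such set found shows no third minimal support can sit inside $\sigma$.

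\textbf{Main obstacle.} The delicate part is ruling out size-$2$ minimal supports inside $\sigma$ while the hypothesis only guarantees one size-$3$ minimal support $\tau$. A priori $M$ could have, say, $\{1,4\}$ as a minimal support with $\{1,2,3\}$ also minimal — then $\sigma=\{1,2,3,4\}=\{1,4\}\cup\{1,2,3\}$, and the claimed conclusion (\emph{exactly two} minimal supports, $\sigma$ their union) might seem threatened. I expect the resolution is that Proposition~\ref{pro:union of zero supports}, applied to $\tau$ and the size-$2$ set, still pins down all minimal zeros inside $\sigma$, so the count comes out to two regardless; but one must check carefully that $\{1,4\}$ and $\{1,2,3\}$ differ in exactly one element of the larger set — they do ($\{1,2,3\}\setminus\{1,4\}=\{2,3\}$ has size $2$, so Proposition~\ref{pro:union of zero supports} does \emph{not} apply in that direction), so instead I would apply it as: $|\{1,4\}\setminus\{1,2,3\}|=1$, giving that the only minimal zeros with support inside $\{1,2,3,4\}$ are (up to scalar) the ones on $\{1,4\}$ and $\{1,2,3\}$ — exactly two, union equals $\sigma$, and their union is indeed a zero support, consistent with $\sigma$ being one. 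So the size-$2$ case does not contradict the lemma; it is simply another instance of "exactly two." Thus the real work is just organizing the case analysis so that in every case Proposition~\ref{pro:union of zero supports} applies with the one-element-difference condition holding for the appropriate pair, and verifying that the two identified minimal supports indeed have union $\sigma$.
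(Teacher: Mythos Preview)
Your proposal is correct and, by the time you reach the final paragraph, it coincides with the paper's proof: take any minimal support $\sigma_2\subseteq\sigma$ containing $4$ (one exists since $\sigma$ is a union of minimal supports by Proposition~\ref{pro:zeros}(b)), observe that $\sigma_2\setminus\tau=\{4\}$ regardless of whether $|\sigma_2|$ is $2$, $3$, or $4$, and apply Proposition~\ref{pro:union of zero supports} directly. The paper's argument is exactly this three-line version; your attempt in the ``Key steps'' section to rule out size-$2$ minimal supports is unnecessary, as you yourself recognise in the ``Main obstacle'' paragraph---the one-element-difference condition holds uniformly, so no case split on $|\sigma_2|$ is needed.
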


\begin{proof}
W.l.o.g. assume that $\sigma=\{1,2,3,4\}$ and that $\sigma_1=\{1,2,3\}$ is a minimal support of size $3$ contained in $\sigma$. Let $\u$ be a zero of $M$ with $\supp\u=\sigma$.
Then $\u$ is a nonnegative combination of minimal zeros, and the union of the corresponding minimal supports is $\sigma$. Thus there is at least one
minimal support $\sigma_2\subseteq \sigma$ such that $4\in \sigma_2$. But then $\sigma=\sigma_1\cup \sigma_2$. The result now follows from Proposition \ref{pro:union of zero
supports}.
\end{proof}

\begin{lemma}\label{lem: support 4 zeros}
Let $M\in \cop_6$ be an exceptional extremal matrix with $\diag(M)=\1$. If a zero support $\sigma$ of $M$ contains $3$ or more different minimal supports,
then $|\sigma|=4$ and $M[\sigma]$ is a $\pm 1$ positive semidefinite matrix of rank $1$. Moreover, there are either $3$ or $4$ minimal supports contained in $\sigma$,
each of them of size $2$, and the union of any two of these minimal supports is also a zero support.
\end{lemma}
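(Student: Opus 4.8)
The plan is to determine $|\sigma|$ and the sizes of the minimal supports inside $\sigma$ first, then force $M[\sigma]$ to have rank one via the connectedness criterion of Proposition~\ref{pro:psdG-1connected}, and finally read off the count of minimal supports and the union property from the resulting complete bipartite structure.

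\emph{Step 1 (sizes).} Since $M$ is exceptional extremal with $\diag(M)=\1$, it is $\nn$-irreducible, hence $\tilde\nn$-irreducible, so by Proposition~\ref{pro:maxsizesupp} every zero support of $M$ has at most $4$ elements, and minimal supports have at least $2$ elements. If $\sigma$ contained a minimal support of size $4$, that support would equal $\sigma$, and being minimal it could not strictly contain any other minimal support, contradicting that $\sigma$ contains $\ge 3$ of them; the same argument excludes a size-$3$ minimal support when $|\sigma|=3$, and Lemma~\ref{lem:size 3 supp union} excludes one when $|\sigma|=4$ (it would force exactly two minimal supports in $\sigma$). Hence every minimal support contained in $\sigma$ has size exactly $2$, so $\sigma$ contains at least three $2$-subsets as minimal supports and thus $|\sigma|\ge 3$. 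If $|\sigma|=3$, all three pairs of $\sigma$ would be minimal supports; choosing two sharing a vertex, say $\{i,j\}$ and $\{j,k\}$, Lemma~\ref{lem:z union} says $\{i,k\}$ is not a zero support, contradicting that it is the third minimal support. Therefore $|\sigma|=4$.

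\emph{Step 2 (rank one).} By Proposition~\ref{pro:A[zs]}, $M[\sigma]$ is positive semidefinite with $\diag M[\sigma]=\1$. For a size-$2$ minimal support $\{i,j\}\subseteq\sigma$ with zero $\u$, Proposition~\ref{pro:A[zs]} makes $M[\{i,j\}]$ a PSD unit-diagonal $2\times2$ matrix with positive null vector $\u[\{i,j\}]$, forcing $m_{ij}=-1$. So the graph $\Gamma$ on vertex set $\sigma$ whose edges are the size-$2$ minimal supports is a subgraph of $G_{-1}(M[\sigma])$; by Lemma~\ref{lem:z union} $\Gamma$ is triangle free, and a triangle-free graph on $4$ vertices with $\ge 3$ edges is connected (a disconnected one has at most $2$ edges). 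Hence $G_{-1}(M[\sigma])$ is connected, and Proposition~\ref{pro:psdG-1connected} yields that $M[\sigma]$ is a rank-one $\pm1$ matrix with $G_{-1}(M[\sigma])=K_{A,B}$ for some bipartition $\sigma=A\cup B$.

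\emph{Step 3 (count and unions).} A pair $\{i,j\}\subseteq\sigma$ is a zero support precisely when $\e_i+\e_j$ is a zero of $M$, i.e.\ when $m_{ij}=1-2+1=0$ requires $m_{ij}=-1$, i.e.\ when $ij\in E(K_{A,B})$; and every zero support of size $2$ is minimal. Combined with Step~1, the minimal supports contained in $\sigma$ are exactly the edges of $K_{A,B}$. Since $|\sigma|=4$ and $K_{A,B}$ has $\ge 3$ edges, $(|A|,|B|)$ is $(1,3)$ or $(2,2)$, giving $3$ or $4$ minimal supports, each of size $2$. For two such minimal supports $\sigma_1,\sigma_2$: if they meet they have the form $\{i,j\},\{j,k\}$ and $\sigma_1\cup\sigma_2=\{i,j,k\}$ is a zero support by Lemma~\ref{lem:z union}; if they are disjoint then $\sigma_1\cup\sigma_2=\sigma$, which is a zero support by hypothesis. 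The main obstacle is Step~2: pinning down all entries of $M[\sigma]$ by hand via Lemma~\ref{lem:z union} fails when $\Gamma$ is a path (the ``far'' entry is never a distance-$2$ pair), so the right move is to deduce rank one abstractly from connectedness of $G_{-1}(M[\sigma])$ rather than computing the matrix, with the elementary ``triangle-free plus three edges on four vertices implies connected'' fact as the glue.
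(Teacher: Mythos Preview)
Your proof is correct and follows the same overall arc as the paper's: reduce all minimal supports in $\sigma$ to size~$2$, establish $|\sigma|=4$, show $G_{-1}(M[\sigma])$ is connected, invoke Proposition~\ref{pro:psdG-1connected}, and read off the complete bipartite structure. The one substantive difference is in how $|\sigma|=4$ and the connectedness of $G_{-1}(M[\sigma])$ are obtained. The paper picks two intersecting size-$2$ minimal supports $\sigma_1,\sigma_2$ and uses Proposition~\ref{pro:union of zero supports} to argue that any third minimal support must reach outside $\sigma_1\cup\sigma_2$, giving $|\sigma|=4$ and connectedness in one stroke. You instead rule out $|\sigma|=3$ directly via Lemma~\ref{lem:z union}, then observe that the minimal-support graph $\Gamma$ is triangle free (again Lemma~\ref{lem:z union}) and apply the elementary fact that a triangle-free graph on four vertices with at least three edges is connected. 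Your route avoids the cited Hildebrand result Proposition~\ref{pro:union of zero supports} entirely, making the argument slightly more self-contained; the paper's route is a touch shorter once that proposition is in hand.
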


\begin{proof}
By Lemma \ref{lem:size 3 supp union}, all the minimal supports contained in $\sigma$ are of size $2$.
 Since $|\sigma|\le 4$, the three minimal supports cannot all be pairwise disjoint.
  Suppose $\sigma_1$ and $\sigma_2$ are  size $2$ minimal supports contained in $\sigma$ such that
  \begin{equation}\sigma_1\cap\sigma_2\ne\emptyset.\label{eq:s1caps2}\end{equation} Then $|\sigma_1\cap\sigma_2|=1$ and $|\sigma_1\cup\sigma_2|=3$.
By Proposition \ref{pro:union of zero supports},
$\sigma_1$ and $\sigma_2$ are the only minimal supports contained in $\sigma_1\cup\sigma_2$. Therefore, a third minimal support, $\sigma_3$, satisfies $|\sigma_1\cup
\sigma_2\cup
\sigma_3|=4$. That is,  $\sigma=\sigma_1\cup \sigma_2\cup \sigma_3$, and
\begin{equation}\sigma_3\cap(\sigma_1\cup \sigma_2)\ne \emptyset .\label{eq:cup si}\end{equation}
 Since $M$ is extremal and  $\diag(M)=\1$, all the entries of $M$ are in the interval $[-1,1]$,  
and each minimal support  of size $2$ contained in $\sigma$ corresponds to a $-1$ off diagonal entry in $M[\sigma]$. Since $\sigma$ is a zero support,
the matrix $M[\sigma]$ is positive semidefinite.
By (\ref{eq:s1caps2})
and (\ref{eq:cup si}), $G_{-1}(M[\sigma])$ is connected.
Proposition \ref{pro:psdG-1connected} then implies that  $M[\sigma]$ is a $\pm 1$ positive semidefinite matrix of rank $1$, and
$G_{-1}(M[\sigma])$ is a complete bipartite graph on $4$ vertices. That is, $G_{-1}(M[\sigma])$ is either $K_{1,3}$ or $K_{2,2}$. The minimal supports contained
 in $\sigma$ correspond to the  edges of $G_{-1}(M[\sigma])$, and therefore there are either three or four of them. If $\sigma_i\cap \sigma_j\ne \emptyset$, then $\sigma_i\cup
 \sigma_j$ is a zero support by Lemma \ref{lem:z union}.
If $\sigma_i\cap \sigma_j=\emptyset$ then $\sigma_i\cup \sigma_j=\sigma$, and is therefore a zero support by the initial assumption.
\end{proof}

For an exceptional extremal $M\in \cop_6$ with positive diagonal we define  $\mathcal{G}_{\mathcal{V}}(M)$ to be the graph whose vertex set is the set of minimal
supports of $M$, $\{\sigma_1, \dots, \sigma_k\}$, in which $\sigma_i\sigma_j$ is an edge if and only if $\sigma_i\cup \sigma_j$ is a zero support of $M$.
By Lemmas \ref{lem:size 3 supp union} and \ref{lem: support 4 zeros} each zero support of $M$ corresponds to a
clique on at most $4$ vertices in $\mathcal{G}_{\mathcal{V}}(M)$, and if a zero support is
represented by a clique on three or four vertices, then the vertices of the clique are minimal supports
of size $2$.

Suppose $A\in \cp_6$ is orthogonal to $M$. Let $B$ and $X$ be as in Observation \ref{ob:AorthM}, $B=(\b_1|\dots|\b_m)$.
For every $1\le i\le m$, the  column $\b_i$ can be represented as a nonnegative combination of $\ell_i$ minimal zeros, $\ell_i\le 4$. Thus we may choose $X$ such that
 support of its $i$-th
column is a clique with $\ell_i$ elements in $\mathcal{G}_{\mathcal{V}}(M)$. In particular,
\begin{equation}G(XX^T)\subseteq \mathcal{G}_{\mathcal{V}}(M)\label{eq:GXX^T contained}.\end{equation}
By Observation \ref{ob:AorthM} and Proposition \ref{pro:subgraph cpr}, (\ref{eq:GXX^T contained}) implies
\begin{equation}\cpr(A)\le  \cpr(\mathcal{G}_{\mathcal{V}}(M)).\label{eq:cprAcprGVM}\end{equation}

In some cases the bound in (\ref{eq:cprAcprGVM}) can be improved.

\begin{lemma}\label{lem:XXT tf}
Let $M\in \cop_6$ be an exceptional extremal matrix with $k$ minimal zeros, and let $A\in \cp_6$ be orthogonal to $M$.
If each zero support of $M$ is a union of at most two minimal supports, then
\[\cpr(A)\le \max(k,\tf(\mathcal{G}_{\mathcal{V}}(M))).\]
\end{lemma}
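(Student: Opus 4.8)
The plan is to combine Observation \ref{ob:AorthM} with the description of $G(XX^T)$ obtained just above the lemma statement, but to exploit the extra structural hypothesis to show that $XX^T$ can be taken to have a triangle-free graph after one more round of support reduction. Recall that, by Observation \ref{ob:AorthM}, a cp-factorization $A=BB^T$ with $B=(\b_1|\dots|\b_m)$ yields $B=WX$ for a nonnegative $X\in\R^{k\times m}_+$, and $\cpr(A)\le\cpr(XX^T)$. Under the hypothesis that every zero support of $M$ is a union of at most two minimal supports, each column $\b_i$ is a nonnegative combination of at most \emph{two} minimal zeros, so we may choose $X$ so that $|\supp(X\e_i)|\le 2$ for every $i$; hence the support of each column of $X$ is either a single vertex or an edge of $\mathcal{G}_{\mathcal{V}}(M)$, and $G(XX^T)\subseteq\mathcal{G}_{\mathcal{V}}(M)$ as in (\ref{eq:GXX^T contained}).

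Next I would pass from $\mathcal{G}_{\mathcal{V}}(M)$ to a triangle-free subgraph. The key observation is that $X$ is a $k\times m$ nonnegative matrix all of whose columns have support of size at most $2$; by Proposition \ref{pro:being dd} (applied to the nonnegative matrix $XX^T\in\sym_k$, with the decomposition $XX^T=\sum_i (X\e_i)(X\e_i)^T$), the matrix $XX^T$ is in the orbit of a diagonally dominant nonnegative matrix. So it suffices to bound $\cpr$ of such matrices when the graph is a subgraph of $\mathcal{G}_{\mathcal{V}}(M)$. Here I would invoke the graph-theoretic machinery: since $\cpr(XX^T)\le\cpr(G(XX^T))\le\cpr(\mathcal{G}_{\mathcal{V}}(M))$, and one wants the sharper bound $\max(k,\tf(\mathcal{G}_{\mathcal{V}}(M)))$. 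The route is to show that a nonnegative matrix in the orbit of a diagonally dominant matrix, with graph $G$ on $k$ vertices, has cp-rank at most $\max(k,\tf(G))$: decompose $XX^T$ into rank-one terms with supports of size $\le 2$ grouped by edge; the terms with support an edge contributing to a triangle can be re-routed along a spanning triangle-free subgraph using Proposition \ref{pro:pairmove} / Proposition \ref{pro:difsupp}, at the cost of at most $k$ diagonal (size-one support) terms plus $\tf(G)$ edge terms. Concretely: collect all size-two-support vectors, and whenever the edges they lie on form a triangle, apply a pair-move of Proposition \ref{pro:pairmove} to shrink one of the supports; iterating, the remaining size-two supports form a triangle-free graph, so there are at most $\tf(G)$ of them, and the leftover weight sits on at most $k$ diagonal vectors. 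Summing gives $\le k+\tf(G)$ naively, but since the $k$ diagonal contributions can be absorbed into the edge contributions whenever an incident edge survives (each vertex of positive degree in the surviving triangle-free graph absorbs its diagonal term via a further pair-move), only isolated vertices of the triangle-free graph cost a separate summand, giving the bound $\max(k,\tf(G))$.

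The main obstacle is the bookkeeping in this last reduction: showing that re-routing the ``triangle'' edge-vectors onto a fixed maximum triangle-free subgraph, and absorbing the diagonal vectors, really costs at most $\max(k,\tf(\mathcal{G}_{\mathcal{V}}(M)))$ summands and not $k+\tf$. This is essentially the content of Propositions \ref{pro:tf}, \ref{pro:being dd} and \ref{pro:cpr for dd}, which handle exactly nonnegative matrices in the orbit of a diagonally dominant matrix — in fact the cleanest path is to cite Proposition \ref{pro:cpr for dd} after first enlarging $G(XX^T)$ to a triangle-free graph, or better, to observe that any nonnegative $A\in\sym_k$ in the orbit of a diagonally dominant matrix with graph $G$ satisfies $\cpr(A)=\max(k,|E(G')|)$ for a triangle-free $G'\supseteq$ the ``essential'' part of $G$, which is precisely Proposition \ref{pro:tf} once one has reduced to the triangle-free case. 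So the real work is the pair-move argument that reduces $G(XX^T)$ to triangle-free form without increasing the number of summands beyond $\max(k,\tf(\mathcal{G}_{\mathcal{V}}(M)))$; everything else is assembly from the cited propositions.
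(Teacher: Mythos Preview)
Your proposal misses the key step and goes down an unnecessarily hard (and, as sketched, flawed) road. The hypothesis ``each zero support of $M$ is a union of at most two minimal supports'' combined with Lemma~\ref{lem:zero triangle} immediately implies that $\mathcal{G}_{\mathcal{V}}(M)$ itself is triangle free: if $\sigma_1,\sigma_2,\sigma_3$ formed a triangle in $\mathcal{G}_{\mathcal{V}}(M)$, then each $\sigma_i\cup\sigma_j$ would be a zero support, so by Lemma~\ref{lem:zero triangle} $\sigma_1\cup\sigma_2\cup\sigma_3$ would be a zero support containing three distinct minimal supports, contradicting the hypothesis. Once $\mathcal{G}_{\mathcal{V}}(M)$ is triangle free, so is its subgraph $G(XX^T)$, and Proposition~\ref{pro:tf} gives $\cpr(XX^T)\le\max(k,|E(G(XX^T))|)\le\max(k,\tf(\mathcal{G}_{\mathcal{V}}(M)))$ directly. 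That is the paper's proof, and it is three lines.

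Your plan instead tries to \emph{force} $G(XX^T)$ into triangle-free form by pair-moves, but this does not work as stated. Proposition~\ref{pro:pairmove} requires $\supp\b\subseteq\supp\d$; two column vectors of $X$ whose supports are distinct edges of a triangle in $\mathcal{G}_{\mathcal{V}}(M)$ have incomparable $2$-element supports, so the pair-move is not applicable. More generally, the assertion you rely on --- that a nonnegative matrix in the orbit of a diagonally dominant matrix with graph $G$ on $k$ vertices has cp-rank at most $\max(k,\tf(G))$ --- is not the content of Propositions~\ref{pro:tf}, \ref{pro:being dd}, \ref{pro:cpr for dd}; the last of these only yields the Mantel bound $k^2/4$, and your sketch of how to absorb diagonal terms and re-route triangle edges is not a proof. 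So the ``main obstacle'' you identify is genuine, and you have not overcome it; but it is also entirely avoidable once you invoke Lemma~\ref{lem:zero triangle}.
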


\begin{proof}
Let $B$ and $X$ be as in Observation \ref{ob:AorthM}.  By Lemma \ref{lem:zero triangle} and the assumptions
on $M$, $\mathcal{G}_{\mathcal{V}}(M)$ is a triangle free graph, and so is its subgraph $G(XX^T)$. By Proposition \ref{pro:tf}
and (\ref{eq:GXX^T contained})
\[\cpr(XX^T)\le \max(k,|E(G(XX^T))|)\le \max(k,\tf(\mathcal{G}_{\mathcal{V}}(M))).\]
 The result follows from Observation \ref{ob:AorthM}.
\end{proof}

For most potential minimal zero sets in Table 1 we do not have enough information
on the graph $\mathcal{G}_{\mathcal{V}}(M)$. We therefore
define for each $M$ the graph $\mathcal{G}(M)$ whose vertices are the minimal zero supports $\sigma_1, \dots, \sigma_k$ of $M$,
and $\sigma_i\sigma_j$ is an edge if and only if  $|\sigma_i\cup \sigma_j|\le 4$. Then
\begin{equation}\mathcal{G}_{\mathcal{V}}(M)\subseteq \mathcal{G}(M)\label{eq:G_V(M) subgraph G(M)},\end{equation}
and therefore
\begin{equation}\cpr(\mathcal{G}_{\mathcal{V}}(M))\le \cpr(\mathcal{G}(M))  \   \text{ and } \    \tf(\mathcal{G}_{\mathcal{V}}(M))\le
\tf(\mathcal{G}(M)).\label{eq:G(M)}\end{equation}

We can now prove Theorem \ref{thm:main}.

\noindent\emph{Proof of Theorem \ref{thm:main}.}
Let $M$ and $A$ be as in the statement of the theorem. If $M$ has a zero diagonal entry, then by Lemma \ref{lem:M is H+0}  $\cpr(A)\le 7$. So suppose $M$ has all diagonal
entries
positive. We may assume that $\diag(M)=\bf 1$. The set of minimal supports of $M$ is one of the sets on Table 1.
We will show that $\cpr(A)\le 9$ for each of these potential minimal supports sets.

For a large number of these cases the same short proof applies:\\
\noindent{\bf Sets no. 6-35. } Each of these (potential) minimal support sets has $6$ elements, at most two of them are supports of size $2$. Let $X\in \R^{6\times m}$ be as in
Observation \ref{ob:AorthM}.
By Lemma \ref{lem:size 3 supp union} each zero of $M$ is a nonnegative combination of at most two minimal zeros, so the support of each column of $X$ is of size at most $2$.
Thus $XX^T$ is a $6\times 6$ matrix which is in the orbit of a diagonally dominant matrix (Proposition \ref{pro:being dd}), so $\cpr(XX^T)\le {6^2}/{4}=9$ (Proposition \ref{pro:cpr for dd}). By Observation
\ref{ob:AorthM},
$\cpr(A)\le 9$.

We now consider the remaining sets:\\
\noindent{\bf Sets no. 1-2. }  Let $\sigma=\{2,5\}$. Each of the principal submatrices $M[1,2,4,5]$, $M[1,2,3,5]$, $M[2,3,5,6]$ is SPN  with diagonal equal to $\1$, and  its
$G_{-1}$ graph is  connected and not complete bipartite. By Proposition \ref{pro:psdG-1connected} these principal submatrices are not positive semidefinite. Thus, by Proposition
\ref{pro:zeros}, $\{1,2,4,5\}$, $\{1,2,3,5\}$, $\{2,3,5,6\}$ are not zero supports.
 The possible zero supports containing $\sigma$ are therefore
$\{2,5\}$ itself, $\{1,2,5\}$ and $\{2,5,6\}$ (in case 1) or $\{2,4,5,6\}$ (in case 2). This means that the degree of $\sigma$ as a vertex of $\mathcal{G}_{\mathcal{V}}(M)$ in
both cases is at most 2.
By Proposition \ref{pro:d(v)<3}, and since $\mathcal{G}_{\mathcal{V}}(M)-\sigma$ has 5 vertices,
\[\cpr(\mathcal{G}_{\mathcal{V}}(M))\le 2+\cpr(\mathcal{G}_{\mathcal{V}}(M)-\sigma)\le 2+6.\]
By (\ref{eq:cprAcprGVM}), this implies that $\cpr(A)\le 8$.

\noindent{\bf Sets no. 3-4. } Let $\sigma=\{2,5\}$ (in Set 3) or $\sigma=\{2,5,6\}$ (in Set 4).
In case of Set 3, $M[1,2,4,5]$, $M[1,2,3,5]$ are not positive semidefinite by the same argument used for the  sets 1-2.
Thus the only possible zero supports of $M$ containing $\sigma$, other than $\sigma$ itself, are $\{1,2,5\}$, $\{2,3,5,6\}$ and $\{2,4,5,6\}$ in case 3, and  $\{1,2,5,6\}$, $\{2,3,5,6\}$ and $\{2,4,5,6\}$ in the case of Set 4.

Let $\w_1$ be a minimal zero  of $A$ supported by $\sigma$, and let $\w_2, \w_3$ and $ \w_4$ be minimal zeros supported by $\{1,2\}$, $\{3,5,6\}$ and  $\{4,5,6\}$, respectively.
For a minimal cp-factorization $A=BB^T$ with $B=(\b_1|\dots |\b_m)\in \R^{6\times m}$, let
\[\Omega_1=\{i | \sigma\subseteq \supp \b_i\},  \ \ \Omega_2=\{1, \dots, m\}\setminus \Omega_1.\]
Choose a minimal cp-factorization for which  $|\Omega_1|$ is minimal.
Let $A_1=\sum_{i\in \Omega_1}\b_i\b_i^T$ and $A_2=\sum_{i\in \Omega_2}\b_i\b_i^T$.
Since the cp-factorization is minimal, $\cpr(A_i)=|\Omega_i|$, $i=1,2$, and \[\cpr(A)=\cpr(A_1)+\cpr(A_2).\]
Since each $\b_i$, $i\in \Omega_2$, is a nonnegative combination of  minimal zeros whose support does not contain $\sigma$,
applying Observation \ref{ob:AorthM} and (\ref{eq:cprAcprGVM}) to $A_2$ (and observing that $\mathcal{G}_{\mathcal{V}}(M)-\sigma$ has 5 vertices)  yields that
\[\cpr (A_2)\le \cpr (\mathcal{G}_{\mathcal{V}}(M)-\sigma)\le 6.\]
It thus remains to show that $\cpr(A_1)=|\Omega_1|\le 3$.

If $\Omega_1$ is a singleton, $\cpr(A_1)=1$ and we are done. Otherwise, $\Omega_1$ has at least two elements. In
that case, no $\b_i$, $i\in \Omega_1$, is supported by $\sigma$, otherwise we could apply  Proposition \ref{pro:pairmove} to replace
it and another $\b_j$, $j\in \Omega_1$, by two vectors, one of which with support that does not contain $\sigma$. This would contradict
the  assumption that  $|\Omega_1|$ is minimal. We therefore have for every  $i\in \Omega_1$, $ \sigma\subsetneq \supp \b_i$. Moreover, by the same argument, in any other
cp-decomposition of $A_1$ none of the vectors is supported by $\sigma$.  By Proposition \ref{pro:difsupp} applied
to $A_1$ we
may assume that $\b_i$,
$i\in \Omega _1$, have different supports. Since there are exactly
three zero supports strictly containing $\sigma$,  $\cpr(A_1)\le 3$, and the proof for these two cases is complete.

\noindent{\bf Set  no. 5.} As in the previous cases,  $M[1,2,3,4]$ is not positive semidefinite, and thus   the union of
 $\{1,2\}$, $\{1,3\}$ and $\{2,4\}$ is not a zero support.
Combined with the fact that all
the other minimal supports or $M$ are of size $3$, we get by Lemma \ref{lem:size 3 supp union} that every zero support
of $M$ is the union of at most two minimal zero supports.

By Lemma \ref{lem:XXT tf} and (\ref{eq:G(M)}), \[\cpr (A) \le \max(6,\tf(\mathcal{G}_{\mathcal{V}}(M)))\le  \tf (\mathcal{G}(M))=8.\]
(To compute $\tf(\mathcal{G}(M))$ note that there exist two disjoint triangles in  $\mathcal{G}(M)$ (see Fig. 1), thus at least two of this graph's ten edges need to be removed
to get a triangle free subgraph.  Omit the edges $\{1,2\}\{1,3\}$ and $\{2,4\}\{3,4,5\}$ to get a triangle free subgraph of $\mathcal{G}(M)$ of maximal size.)

\begin{center}
 \begin{tikzpicture}
\draw[semithick] (0,0)--(3,0)--(3,3)--(0,3)--cycle;
\draw[semithick]  (0,3)--(3,0);
\draw[semithick]  (0,0)--(1,2);
\draw[semithick]  (0,0)--(2,1);
\draw[semithick]  (2,1)--(3,3);

\node[left] at (0,3) {\{1,5,6\}};
\node[right] at (3,3) {\{4,5,6\}};
\node[right] at (1,2) {\{1,2\}\quad };
\node[right] at (2,1) {\{2,4\}};
\node[right] at (3,0) {\{3,4,5\}};
\node[left] at (0,0){\{1,3\}};

\draw[fill](0,0) circle[radius=0.1];
\draw[fill](3,0) circle[radius=0.1];
\draw[fill](0,3) circle[radius=0.1];
\draw[fill](3,3) circle[radius=0.1];
\draw[fill](1,2) circle[radius=0.1];
\draw[fill](2,1) circle[radius=0.1];

\node[align=center, below] at  (1.5,-0.7){Fig. 1: $\tf(\mathcal{G}(M))=8$, Case 5};
 \end{tikzpicture}
 \end{center}

\noindent{\bf Set no. 36. } In this case, each minimal zero is of size $2$. Since $\{1,3\}, \{1,2\},\{2,5\},\{5,6\}$ and $ \{3,6\}$
are minimal zeros of $M(4)$ and $\diag(M(4))=\1$, the matrix $M(4)$ is
a permutation of the Horn matrix. Similarly, $M(2)$ is also a permutation of the Horn matrix. This implies that  $M$ is a $\pm 1$-matrix, except possibly the entry $m_{24}$. By
 Lemma \ref{lem:z union} applied to $i=2$, $j=1$ and $k=4$, we also have $m_{24}=1$.
By Proposition \ref{pro:psdG-1connected}, $M[2,3,5,6]$, $M[1,2,3,6]$ and $M[3,4,5,6]$ are not positive semidefinite since their $G_{-1}$ graph is not complete bipartite.
Therefore
the only zero supports of $M$ containing $\{3,6\}$ are $\{3,6\}$ itself, $\{1,3,6\}$ and $\{3,5,6\}$.

The minimal zeros of size $2$ contained in $\{1,2,4,5\}$ imply that $G_{-1}(M[1,2,4,5])$ is a complete bipartite graph, $K_{2,2}$, hence
the submatrix $M[1,2,4,5]$ is a $\pm1$ rank $1$ positive semidefinite matrix, and
$\{1,2,4,5\}$ is a zero support of size $4$ (it is a union of two disjoint minimal supports in two ways: $\{1,2\}\cup \{4,5\}$ and $\{1,4\}\cup\{2,5\}$).
 The nullspace of the positive semidefinite matrix $M[1,2,4,5]$ is spanned by the minimal zeros of this submatrix,
\[\bar\v_1=\left(
         \begin{array}{c}
           1 \\
           1 \\
           0 \\
           0 \\
         \end{array}
       \right) ~ ,~ \bar\v_2=\left(
         \begin{array}{c}
           0 \\
           0 \\
           1 \\
           1 \\
         \end{array}
       \right)  ~ ,~ \bar\v_3=\left(
         \begin{array}{c}
           1 \\
           0 \\
           1 \\
           0 \\
         \end{array}
       \right)  ~ ,~ \bar\v_4=\left(
         \begin{array}{c}
           0 \\
           1 \\
           0 \\
           1 \\
         \end{array}
       \right) .
\]
In fact, it is not hard to see that every zero of $M[1,2,4,5]$ may be represented either as a nonnegative combination of
$\bar\v_1, \bar\v_2, \bar\v_3$, or as a nonnegative combination of $\bar\v_1, \bar\v_2, \bar\v_4$.
Let $\v_1, \v_2, \v_3, \v_4$ the vectors in $\R^6$ obtained by appending zero entries to $\bar\v_1, \bar\v_2, \bar\v_3, \bar\v_4$, so that $\v_i[1,2,4,5]=\bar\v_i$.
Then  every zero of $M$ whose support is contained in $\{1,2,4,5\}$ can be represented as a nonnegative combination  of either $\v_1, \v_2, \v_3$ or  $\v_1, \v_2,
\v_4$.
Let  $W\in \R^{n\times k}_+$ be the matrix of minimal zeros of $M$. Then $A$ has a minimal cp-factorization  $A=BB^T$ with $B=WX$, where $X\in \R^{k\times p}_+$, and $G(XX^T)$
is a subgraph of the  graph $\mathcal{G}$ shown in Fig. 2 (note that $\mathcal{G}_{\mathcal{V}}(M)$ contains also the edge $\sigma_1\sigma_2$, where $\sigma_1=\{1,4\}$,
$\sigma_2=\{2,5\}$, but by the above $X$ can be chosen  so that $G(XX^T)$ does not include that edge). Let $\sigma=\{3,6\}$. Then $\sigma$ is a vertex of degree $2$ in
$\mathcal{G}$, and $\mathcal{G}-\sigma$ is an outerplanar graph
with $\tf(\mathcal{G}-\sigma)=7$ (it has $9$ edges, and  two disjoint triangles). Combining Propositions \ref{pro:d(v)<3} and \ref{pro:outerplanar}  we get that
\[\cpr(A)\le 2+\cpr(\mathcal{G}-\sigma)=2+\tf(\mathcal{G}-\sigma)=9.\]

\begin{center}
 \begin{tikzpicture}

\draw[semithick] (0,0.75)--(0.75,0);
\draw[semithick] (0,0.75)--(0.75,1.5);
\draw[semithick] (0.75,0)--(0.75,1.5);
\draw[semithick] (0.75,0)--(2.25,0);
\draw[semithick] (2.25,0)--(2.25,1.5);
\draw[semithick] (0.75,1.5)--(2.25,1.5);
\draw[semithick] (0.75,1.5)--(2.25,0);
\draw[semithick] (2.25,1.5)--(3,0.75);
\draw[semithick] (2.25,0)--(3,0.75);
\draw[semithick] (1.5,3) to [out=195,in=90] (0,0.75);
\draw[semithick] (3,0.75) to [out=90,in=-15] (1.5,3);

\draw[fill](0.75,0) circle[radius=0.1];
\draw[fill](0,0.75) circle[radius=0.1];
\draw[fill](0.75,1.5) circle[radius=0.1];
\draw[fill](2.25,0) circle[radius=0.1];
\draw[fill](2.25,1.5) circle[radius=0.1];
\draw[fill](3,0.75) circle[radius=0.1];
\draw[fill](1.5,3) circle[radius=0.1];

\node[left] at (0,0.75) {\{1,3\}};
\node[below] at (0.75,0) {\{1,4\}};
\node[above] at (0.75,1.5) {\{1,2\}};
\node[below] at (2.25,0) {\{4,5\}};
\node[above] at (2.25,1.5) {\{2,5\}};
\node[right] at (3,0.75){\{5,6\}};
\node[above] at (1.5,3) {\{3,6\}};

\node[align=center, below] at  (1.5,-0.7){Fig. 2: The graph  $\mathcal{G}$, Case 36};
 \end{tikzpicture}
 \end{center}

\noindent{\bf Sets no. 37-42. } In each of these cases there are $7$ minimal supports, each of them, except possibly one, of size $3$.
 By Lemma \ref{lem:size 3 supp union}, each zero support of $M$ is a union of at most two minimal supports.
Thus Lemma \ref{lem:XXT tf} implies that $\cpr(A)\le \max(7,\tf(\mathcal{G}_{\mathcal{V}}(M)))$.
Combined with (\ref{eq:G(M)}) we get
that
\[\cpr (A)\le \max(7, \tf(\mathcal{G}(M))).\]
In Figs. 3--8 the graph $\mathcal{G}(M)$ is shown for each of these cases. In all of them $\tf(\mathcal{G}(M))\le 9$.
(In each case, $\tf(\mathcal{G})(M)$ turns out to be $|E(\mathcal{G}(M))|-q$, where $q=1$ or $2$ is
the maximal number of edge-disjoint triangles in the graph.)

\vskip 4mm

\begin{center}
 \begin{tikzpicture}


\draw[semithick] (0,1.25)--(2.25,0);
\draw[semithick] (0,1.25)--(4.5,1.25);
\draw[semithick] (4.5,1.25)--(2.25,0);
\draw[semithick] (0,1.25)--(1.5,2.75);
\draw[semithick] (1.5,2.75)--(3,2.75);
\draw[semithick] (3,2.75)--(4.5,1.25);
\draw[semithick] (1.5,2.75)--(1.5,1.25);
\draw[semithick] (3,2.75)--(1.5,1.25);
\draw[semithick] (3,2.75)--(3,1.25);

\draw[fill](2.25,0) circle[radius=0.1];
\draw[fill](0,1.25) circle[radius=0.1];
\draw[fill](4.5,1.2) circle[radius=0.1];
\draw[fill](3,1.25) circle[radius=0.1];
\draw[fill](3,2.75) circle[radius=0.1];
\draw[fill](1.5,2.75) circle[radius=0.1];
\draw[fill](1.5,1.25) circle[radius=0.1];

\node[below] at (2.25,0) {\{1,4,6\}};
\node[left] at (0,1.25) {\{4,5,6\}};
\node[right] at (4.5,1.25){\{1,3,4\}};
\node[above left] at (1.5,2.75) {\{2,5,6\}};
\node[above right] at (3,2.75) {\{1,2\}};
\node[below ] at (1.5,1.25) {\{3,5,6\}};
\node[below ] at (3,1.25){\{1,3,5\}};

\node[below] at  (2.2,-1) {Fig. 3: Case 37, $\tf(\mathcal{G}(M))=9$};


\draw[semithick] (8,0)--(12,0);
\draw[semithick] (8,0)--(10,2.65);
\draw[semithick] (12,0)--(10,2.65);
\draw[semithick] (10,0)--(9,1.35);
\draw[semithick] (10,0)--(11,1.35);
\draw[semithick] (8.4,1.9)--(10,2.65);
\draw[semithick] (8.4,1.9)--(8,0);

\draw[fill](8,0) circle[radius=0.1];
\draw[fill](12,0) circle[radius=0.1];
\draw[fill](10,2.65) circle[radius=0.1];
\draw[fill](8.4,1.9) circle[radius=0.1];
\draw[fill](11,1.35) circle[radius=0.1];
\draw[fill](10,0) circle[radius=0.1];
\draw[fill](9,1.35) circle[radius=0.1];

\node[left] at (8.4,1.9) {\{3,5,6\}};
\node[below left] at (8,0) {\{2,5,6\}};
\node[below] at (10,0){\{1,2\}};
\node[above] at (10,2.65) {\{3,4,6\}};
\node[right ] at (11,1.35) {\{1,3,4\}};
\node[below right] at (12,0) {\{1,3,5\}};
\node[right] at (9,1.35){\{2,4,6\}};

\node[align=center, below] at  (9.75,-1){Fig. 4: Case 38, $\tf(\mathcal{G}(M))=8$};
 \end{tikzpicture}
 \end{center}

\vskip 4mm

\begin{center}
 \begin{tikzpicture}


\draw[semithick] (1.5,0)--(3,0);
\draw[semithick] (0.75,1.25)--(3.75,1.25);
\draw[semithick] (0.75,1.25)--(1.5,0);
\draw[semithick] (3.75,1.25)--(3,0);
\draw[semithick] (0.75,2.5)--(2.25,2.5);
\draw[semithick] (2.25,2.5)--(3.75,1.25);
\draw[semithick] (2.25,1.25)--(2.25,2.5);
\draw[semithick] (0.75,1.25)--(0.75,2.5);

\draw[fill](1.5,0) circle[radius=0.1];
\draw[fill](3,0) circle[radius=0.1];
\draw[fill](0.75,1.25) circle[radius=0.1];
\draw[fill](3.75,1.25) circle[radius=0.1];
\draw[fill](2.25,1.25) circle[radius=0.1];
\draw[fill](2.25,2.5) circle[radius=0.1];
\draw[fill](0.75,2.5) circle[radius=0.1];

\node[above left] at (0.75,2.5) {\{1,4,6\}};
\node[below left] at (1.5,0) {\{3,5,6\}};
\node[left] at (0.75,1.25){\{1,3,6\}};
\node[above] at (2.25,2.5) {\{1,2,4\}};
\node[below] at (2.25,1.25) {\{1,2,3\}};
\node[below right] at (3,0) {\{2,5,6\}};
\node[above right] at (3.75,1.25){\{1,2,5\}};

\node[below] at  (2.2,-1) {Fig. 5: Case 39, $\tf(\mathcal{G}(M))=8$};


\draw[semithick] (8,0)--(11,0);
\draw[semithick] (8,1.5)--(11,1.5);
\draw[semithick] (9.5,0)--(9.5,1.5);
\draw[semithick] (8,0)--(8,1.5);
\draw[semithick] (11,0)--(11,1.5);
\draw[semithick] (11,1.5)--(12,0.75);
\draw[semithick] (11,0)--(12,0.75);

\draw[fill](8,0) circle[radius=0.1];
\draw[fill](8,1.5) circle[radius=0.1];
\draw[fill](11,0) circle[radius=0.1];
\draw[fill](11,1.5) circle[radius=0.1];
\draw[fill](9.5,0) circle[radius=0.1];
\draw[fill](9.5,1.5) circle[radius=0.1];
\draw[fill](12,0.75) circle[radius=0.1];

\node[above left] at (8,1.5) {\{4,5,6\}};
\node[below left] at (8,0) {\{3,5,6\}};
\node[below] at (9.5,0){\{1,3,6\}};
\node[above] at (9.5,1.5) {\{1,4,6\}};
\node[right ] at (12,0.75) {\{1,2,5\}};
\node[below right] at (11,0) {\{1,2,3\}};
\node[above right] at (11,1.5){\{1,2,4\}};

\node[below] at  (9.75,-1) {Fig. 6: Case 40, $\tf(\mathcal{G}(M))=8$};

 \end{tikzpicture}
 \end{center}

\vskip 4mm

\begin{center}
 \begin{tikzpicture}


\draw[semithick] (0,1.25)--(0.75,2);
\draw[semithick] (0,1.25)--(0.75,0.5);
\draw[semithick] (0.75,0.5)--(0.75,2);
\draw[semithick] (0.75,0.5)--(3.75,0.5);
\draw[semithick] (0.75,2)--(3.75,2);
\draw[semithick] (3.75,0.5)--(3.75,2);
\draw[semithick] (3.75,2)--(4.75,1.25);
\draw[semithick] (3.75,0.5)--(4.75,1.25);

\draw[fill](0.75,0.5) circle[radius=0.1];
\draw[fill](0.75,2) circle[radius=0.1];
\draw[fill](3.75,0.5) circle[radius=0.1];
\draw[fill](3.75,2) circle[radius=0.1];
\draw[fill](0,1.25) circle[radius=0.1];
\draw[fill](2.25,2) circle[radius=0.1];
\draw[fill](4.75,1.25) circle[radius=0.1];

\node[above left] at (0.75,2) {\{3,4,6\}};
\node[below left] at (0.75,0.5) {\{1,3,6\}};
\node[above] at (2.25,2){\{2,4,6\}};
\node[left] at (0,1.25) {\{3,5,6\}};
\node[right ] at (4.75,1.25) {\{1,2,5\}};
\node[below right] at (3,0.5) {\{1,2,3\}};
\node[above right] at (3.75,2){\{1,2,4\}};

\node[below] at  (2.25,-0.6) {Fig. 7: Case 41, $\tf(\mathcal{G}(M))=7$};


\draw[semithick] (8,0.5)--(8,2);
\draw[semithick] (8,0.5)--(11,0.5);
\draw[semithick] (8,2)--(11,2);
\draw[semithick] (11,0.5)--(11,2);
\draw[semithick] (11,2)--(12,1.25);
\draw[semithick] (11,0.5)--(12,1.25);

\draw[fill](8,0.5) circle[radius=0.1];
\draw[fill](8,2) circle[radius=0.1];
\draw[fill](11,0.5) circle[radius=0.1];
\draw[fill](11,2) circle[radius=0.1];
\draw[fill](12,1.25) circle[radius=0.1];
\draw[fill](9.5,2) circle[radius=0.1];
\draw[fill](9.5,0.5) circle[radius=0.1];

\node[above left] at (8,2) {\{4,5,6\}};
\node[below left] at (8,0.5) {\{3,5,6\}};
\node[above] at (9.5,2){\{2,4,6\}};
\node[below] at (9.5,0.5){\{1,3,6\}} ;
\node[right ] at (12,1.25) {\{1,2,5\}};
\node[below right] at (11,0.5) {\{1,2,3\}};
\node[above right] at (11,2){\{1,2,4\}};

\node[below] at  (9.75,-0.6) {Fig. 8: Case 42, $\tf(\mathcal{G}(M))=7$};

 \end{tikzpicture}
 \end{center}

\noindent{\bf Set no. 43. }
In this case, the matrix $M$ has $8$ minimal supports of size $3$, and by Lemma \ref{lem:size 3 supp union}, each zero support is the union
of at most two minimal supports.  The graph $\mathcal{G}(M)$ for this case is shown in Fig. 9.

\begin{center}
 \begin{tikzpicture}

\draw[semithick] (0,1.5)--(1.5,0);
\draw[semithick] (1.5,0)--(4.5,0);
\draw[semithick] (1.5,0)--(1.5,1.5);
\draw[semithick] (4.5,0)--(6,1.5);
\draw[semithick] (4.5,0)--(4.5,1.5);
\draw[semithick] (0,1.5)--(1.5,1.5);
\draw[semithick] (4.5,1.5)--(6,1.5);
\draw[semithick] (1.5,1.5)--(3,3);
\draw[semithick] (0,1.5)--(3,4.5);
\draw[semithick] (3,3)--(3,4.5);
\draw[semithick] (4.5,1.5)--(3,3);
\draw[semithick] (6,1.5)--(3,4.5);

\draw[fill](1.5,0) circle[radius=0.1];
\draw[fill](0,1.5) circle[radius=0.1];
\draw[fill](4.5,1.5) circle[radius=0.1];
\draw[fill](4.5,0) circle[radius=0.1];
\draw[fill](1.5,1.5) circle[radius=0.1];
\draw[fill](3,3) circle[radius=0.1];
\draw[fill](3,4.5) circle[radius=0.1];
\draw[fill](6,1.5) circle[radius=0.1];

\node[left] at (0,1.5) {\{3,5,6\}};
\node[below left] at (1.5,0) {\{2,5,6\}};
\node[left ] at (4.5,1.5) {\{1,2,4\}};
\node[right] at (1.5,1.5) {\{4,5,6\}};
\node[below right] at (4.5,0) {\{1,2,5\}};
\node[below] at (3,2.6){\{1,4,6\}};
\node[right] at (6,1.5) {\{1,2,3\}};
\node[above] at (3,4.5) {\{1,3,6\}};

\node[align=center, below] at  (3,-0.7){Fig. 9: $\mathcal{G}(M)$, Case 43};
 \end{tikzpicture}
 \end{center}

For every $i,j\in \{1,2,3,5,6\}$, $i\ne j$, $\{i,j\}$ is a subset of
one of the minimal zeros. Thus by Proposition \ref{pro:Nirr},
the matrix $M(4)$ is $\tilde{\nn}$-irreducible. Thus if $M(4)$ had a zero support of size $4$, then $M(4)$ would be  positive semidefinite by Proposition
\ref{pro:maxsizesupp}, and then, since $M$ itself is  $\tilde{\nn}$-irreducible, $M$ would also be positive semidefinite by Proposiition \ref{pro:n-1psd of irr}, contrary
to the assumption that $M$ is exceptional. Thus there are no zero supports of $M$ of size $4$ contained in $\{1,2,3,5,6\}$.
By the same argument for $M(3)$, there are no zero supports of $M$ of size $4$ contained in  $\{1,2,4,5,6\}$. Thus $\mathcal{G}_{\mathcal{V}}(M)$
is actually a subgraph of the smaller graph shown in Fig. 10, which is a forest. By (\ref{eq:cprAcprGVM}), $\cpr(A)\le \cpr(\mathcal{G}_{\mathcal{V}}(M))\le 8$.

\begin{center}
 \begin{tikzpicture}

\draw[semithick] (0,1.5)--(1.5,1.5);
\draw[semithick] (4.5,1.5)--(6,1.5);
\draw[semithick] (3,3)--(3,4.5);

\draw[fill](1.5,0) circle[radius=0.1];
\draw[fill](0,1.5) circle[radius=0.1];
\draw[fill](4.5,1.5) circle[radius=0.1];
\draw[fill](4.5,0) circle[radius=0.1];
\draw[fill](1.5,1.5) circle[radius=0.1];
\draw[fill](3,3) circle[radius=0.1];
\draw[fill](3,4.5) circle[radius=0.1];
\draw[fill](6,1.5) circle[radius=0.1];

\node[left] at (0,1.5) {\{3,5,6\}};
\node[below left] at (1.5,0) {\{2,5,6\}};
\node[left ] at (4.5,1.5) {\{1,2,4\}};
\node[right] at (1.5,1.5) {\{4,5,6\}};
\node[below right] at (4.5,0) {\{1,2,5\}};
\node[below] at (3,2.6){\{1,4,6\}};
\node[right] at (6,1.5) {\{1,2,3\}};
\node[above] at (3,4.5) {\{1,3,6\}};

\node[align=center, below] at  (3,-0.7){Fig. 10: A supergraph of $\mathcal{G}_{\mathcal{V}}(M)$, Case 43};
 \end{tikzpicture}
 \end{center}

\noindent{\bf Set no. 44. }
In this case, the matrix $M$ has $8$ minimal supports, all of size $3$. The graph $\mathcal{G}(M)$  is the bipartite graph shown in Fig. 11 (the cube graph).
Suppose there is a path of length two  in the inner $4$-cycle, such that each of its edges represents a zero support of size $4$ of $M$. Then
for every $i,j\in \{1,2,3,4,5\}$, $i\ne j$, $\{i,j\}$ is a subset of
a zero support of $M$, and therefore of $M(6)$. Thus the principal submatrix $M(6)$ is a $5\times 5$ $\tilde{\nn}$-irreducible matrix with a zero support of size $4$. By
Proposition \ref{pro:maxsizesupp}, $M(6)$ is then positive semidefinite. But
then $M$ itself is positive semidefinite by Proposition \ref{pro:n-1psd of irr}, contrary to the fact that $M$ is exceptional.
Thus at most two parallel edges of the inner $4$-cycle in Fig. 11 represent zeros of  size $4$ of $M$. By the same argument  for $M(1)$ at most
two parallel edges of the outer $4$-cycle represent zeros of  size $4$ of $M$. That is,  at most $8$ of the $12$ edges of the  graph  $\mathcal{G}(M)$
shown in Fig. 11 are edges of $\mathcal{G}_{\mathcal{V}}(M)$. Hence $\cpr(A)\le \cpr(\mathcal{G}_{\mathcal{V}}(M))\le 8$.\hfill$\square$

\begin{center}
 \begin{tikzpicture}

\draw[semithick] (0,0)--(5,0);
\draw[semithick] (0,3.5)--(5,3.5);
\draw[semithick] (1,1)--(4,1);
\draw[semithick] (1,2.5)--(4,2.5);
\draw[semithick] (1,1)--(1,2.5);
\draw[semithick] (4,1)--(4,2.5);
\draw[semithick] (0,0)--(0,3.5);
\draw[semithick] (5,0)--(5,3.5);
\draw[semithick] (0,0)--(1,1);
\draw[semithick] (0,3.5)--(1,2.5);
\draw[semithick] (5,0)--(4,1);
\draw[semithick] (5,3.5)--(4,2.5);

\draw[fill](0,0) circle[radius=0.1];
\draw[fill](5,0) circle[radius=0.1];
\draw[fill](0,3.5) circle[radius=0.1];
\draw[fill](5,3.5) circle[radius=0.1];
\draw[fill](1,1) circle[radius=0.1];
\draw[fill](4,1) circle[radius=0.1];
\draw[fill](1,2.5) circle[radius=0.1];
\draw[fill](4,2.5) circle[radius=0.1];

\node[below left] at (0,0) {\{3,5,6\}};
\node[below right] at (5,0)  {\{4,5,6\}};
\node[above left] at (0,3.5)  {\{2,3,6\}};
\node[above right] at (5,3.5) {\{2,4,6\}};
\node[above right] at (1,1)  {\{1,3,5\}};
\node[above left] at (4,1)  {\{1,4,5\}};
\node[below right] at (1,2.5)  {\{1,2,3\}};
\node[below left] at (4,2.5) {\{1,2,4\}};

\node[align=center, below] at  (2.5,-0.7){Fig. 11:  $\mathcal{G}(M)$, Case 44};
 \end{tikzpicture}
 \end{center}

Note that by Proposition \ref{pro:tf} a completely positive matrix $A$ whose graph is the complete bipartite graph
$K_{3,3}$ has $\cpr(A)=|E(K_{3,3})|=9$. Since $p_6$ is attained at a nonsingular matrix on the boundary, this together with
Theorem \ref{thm:main} implies the following.

\begin{corollary}\label{cor:p6 attained}
The maximum cp-rank $p_6$ is attained at a nonsingular matrix  $A\in \cp_6$ which has a zero entry.
\end{corollary}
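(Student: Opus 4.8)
The plan is to deduce the statement from three facts already in hand: (1) by \cite{ShakBomJarScha13}, $p_6=\cpr(A)$ for some \emph{nonsingular} $A$ on the boundary of $\cp_6$, and moreover (as recalled in the introduction) such an $A$ either has a zero entry or is orthogonal to an exceptional extremal matrix of $\cop_6$; (2) Proposition \ref{pro:tf} pins down the cp-rank of any completely positive matrix whose graph is $K_{3,3}$; (3) Theorem \ref{thm:main}.

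First I would produce a concrete nonsingular completely positive matrix with a zero entry and cp-rank $9$, so as to record $p_6\ge 9$. Take $N=I_6+\sum_{i=1}^{3}\sum_{j=4}^{6}(\e_i+\e_j)(\e_i+\e_j)^T$. It is a sum of nonnegative rank-one matrices, hence completely positive; it is positive definite, being the identity plus a positive semidefinite matrix, hence nonsingular; and inspecting its entries shows $G(N)=K_{3,3}$ with parts $\{1,2,3\}$ and $\{4,5,6\}$, so $N$ has zero entries (for instance $n_{12}=0$). Since $K_{3,3}$ is triangle free, Proposition \ref{pro:tf} gives $\cpr(N)=\max(6,|E(K_{3,3})|)=9$, so $p_6\ge 9$.

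Next I would split on the matrix $A$ of fact (1). If $A$ has a zero entry, then $A$ itself --- nonsingular, attaining $p_6$, with a zero entry --- is exactly what we want. Otherwise $A$ is orthogonal to an exceptional extremal $M\in\cop_6$, and Theorem \ref{thm:main} gives $p_6=\cpr(A)\le 9$; combined with $p_6\ge 9$ this forces $p_6=9=\cpr(N)$, and then $N$ is a nonsingular completely positive matrix with a zero entry attaining $p_6$. In either case the corollary follows.

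I do not anticipate a genuine obstacle: all the real work sits in Theorem \ref{thm:main}, already proved, and in the cited result that the maximum cp-rank is attained at a nonsingular boundary matrix. The only points to handle with a little care are that the case split on $A$ is exhaustive --- which is precisely the three-part description of $\partial\cp_6$ recalled in the introduction --- and that the $K_{3,3}$-witness can be chosen nonsingular, which is why I prefer to write down the explicit positive definite $N$ rather than invoke a generic choice.
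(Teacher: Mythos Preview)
Your proof is correct and follows essentially the same route as the paper: combine the lower bound $p_6\ge 9$ from a $K_{3,3}$ example (Proposition~\ref{pro:tf}), the fact that $p_6$ is attained at a nonsingular boundary matrix, and Theorem~\ref{thm:main} to force the case split. The one place you are more careful than the paper is in explicitly exhibiting a \emph{nonsingular} $K_{3,3}$ witness $N$; the paper leaves this implicit, but your positive-definite construction is exactly what is needed to close the second branch of the dichotomy.
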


\bibliographystyle{plain}
\frenchspacing \small 

\end{document}